\numberwithin{equation}{section} 
\newtheorem{theorem}[equation]{Theorem}
\newtheorem{thm}[equation]{Theorem}
\newtheorem{cor}[equation]{Corollary}
\newtheorem{lemma}[equation]{Lemma}
\newtheorem{prop}[equation]{Proposition}
\theoremstyle{definition}
\newtheorem{defn}[equation]{Definition}
\newtheorem{rk}[equation]{Remark}
\newtheorem{question}[equation]{Question}
\newtheorem{eg}[equation]{Example}
\newtheorem{con}[equation]{Construction}
\newcommand{\cC}{\mathcal{C}}
\newcommand{\cI}{\mathcal{I}}
\newcommand{\cM}{\mathcal{M}}
\newcommand{\cN}{\mathcal{N}}
\DeclareMathOperator{\hocolim}{hocolim}
\DeclareMathOperator{\colim}{colim}
\DeclareMathOperator{\const}{const}
\DeclareMathOperator{\Hom}{Hom}
\DeclareMathOperator{\supp}{supp}
\newcommand{\xra}{\xrightarrow}
\newcommand{\xla}{\xleftarrow}
\newcommand{\id}{{\mathrm{id}}}
\newcommand{\sm}{\wedge}
\newcommand{\iso}{\cong}
\newcommand{\concat}{\sqcup}
\newcommand{\bld}[1]{{\mathbf{#1}}}
\newcommand{\sset}{\mathrm{sSet}}
\newcommand{\set}{\mathrm{Set}}
\newcommand{\tm}{\mathrm{tame}}
\newcommand{\doilink}[1]{\href{http://dx.doi.org/#1}{doi:#1}}
\title{Homotopy invariance of convolution products}
\author{Steffen Sagave}
\address{IMAPP, Radboud University Nijmegen, The Netherlands}
\email{s.sagave@math.ru.nl}
\author{Stefan Schwede}
\address{Mathematisches Institut, Universit\"at Bonn, Germany}
\email{schwede@math.uni-bonn.de}
\subjclass[2010]{55P99; 55U35, 18D50} 
\begin{document}

\begin{abstract}
  The purpose of this paper is to show that various convolution
  products are fully homotopical, meaning that they preserve weak
  equivalences in both variables without any cofibrancy hypothesis.
  We establish this property for diagrams of simplicial sets indexed
  by the category of finite sets and injections and for tame
  $M$-simplicial sets, with $M$ the monoid of injective self-maps of
  the positive natural numbers.  We also show that a certain
  convolution product studied by Nikolaus and the first author is
  fully homotopical. This implies that every presentably symmetric
  monoidal $\infty$-category can be represented by a symmetric
  monoidal model category with a fully homotopical monoidal product.
\end{abstract}
\date{\today}
\maketitle

\section{Introduction}
The convolution product of functors between symmetric monoidal
categories was introduced by the category theorist Brian Day
\cite{day:closed}.  It made a prominent appearance in homotopy theory
when Jeff Smith and Manos Lydakis simultaneously and independently
introduced the smash products of symmetric spectra \cite{HSS} and the
smash product of $\Gamma$-spaces \cite{lydakis:Gamma}, respectively.
Since then, convolution products have become an essential ingredient
in the homotopy theory toolkit, and many more examples were introduced
and studied in the context of stable homotopy theory
\cite{lydakis:simplicial, mmss}, unstable homotopy theory
\cite{lind:diagram, Sagave-S_diagram, Sagave-S_group-compl},
equivariant homotopy theory \cite{dundas-rondigs-ostvaer:enriched,
  mandell-may}, motivic homotopy theory \cite{dundas-rondigs-ostvaer:motivic,
  jardine:motivic}, and $\infty$-category theory~\cite{glasman},
to name just a few.

In order for convolution products to be homotopically meaningful,
they must `interact nicely' with some relevant notion of weak
equivalence (except in the $\infty$-categorical case,
where this is an intrinsic feature). In typical situations,
homotopically useful convolution products come with compatible closed model
structures. The compatibility then includes the `pushout product
property' which implies that the convolution product is homotopy
invariant when all objects involved are cofibrant.  This cofibrancy
requirement for the homotopy invariance often leads to cofibrancy
hypotheses in applications. The proof of the homotopy
invariance for cofibrant objects typically proceeds by a cellular
reduction argument to free or representable objects.

Given the history of the subject, one would not expect convolution
products to be \emph{fully homotopical}, i.e., to preserve the
relevant weak equivalences in both variables without any cofibrancy
hypothesis. Nevertheless, there are already two non-obvious instances
of full homotopy invariance. On the one hand, the box product of
orthogonal spaces (also known as $\mathscr I$-functors,
$\mathscr I$-spaces or $\mathcal I$-spaces) is fully
homotopical~\cite[Theorem\,1.3.2]{schwede:global}. On the other hand,
the operadic product of $\mathcal L$-spaces, i.e., spaces with a
continuous action of the topological monoid $\mathcal L$ of linear
self-isometries of $\mathbb R^\infty$, is fully
homotopical~\cite[Theorem\,1.21]{schwede:orbispaces}. In both cases,
the weak equivalences can be chosen to be the global equivalences, and
the proofs make essential use of explicit homotopies that are not
available for more discrete or combinatorial index categories.

The purpose of the present paper is to show that there are more
interesting instances of fully homotopical convolution products.

\subsection*{\texorpdfstring{$\cI$}{I}-spaces}

Let $\cI$ be the category with objects the finite sets $\bld{m}=\{1,\dots, m\}$
and with morphisms the injections. An \emph{$\cI$-space} is a covariant
functor from $\cI$ to simplicial sets. We say that a map of
$\cI$-spaces $f\colon X \to Y$ is an \emph{$\cI$-equivalence} if the induced
map of homotopy colimits $f_{h\cI}\colon X_{h\cI} \to Y_{h\cI}$ is a
weak equivalence. The category of $\cI$-spaces $\sset^{\cI}$ has a Day
type convolution product $\boxtimes$, with $X\boxtimes Y$ defined as
the left Kan extension of the object-wise cartesian product
$(\bld{m},\bld{n}) \mapsto X(\bld{m})\times Y(\bld{n})$ along the
concatenation $- \concat - \colon \cI \times \cI \to \cI$. The
interest in $\cI$-spaces comes from the fact that homotopy types of
$E_{\infty}$ spaces can be represented by commutative $\cI$-space
monoids, i.e., by commutative monoids with respect to $\boxtimes$, and
that many interesting $E_{\infty}$ spaces arise from explicit
commutative $\cI$-space monoids (see e.g.~\cite[\S 1.1]{Sagave-S_diagram}). 

Our first main result states that $\boxtimes$ is fully homotopical. 
 \begin{theorem}\label{thm:intro:boxtimes-fully-homotopical}
  Let $X$ be an $\cI$-space. Then the functor $X \boxtimes - $
  preserves $\cI$-equivalences.
\end{theorem}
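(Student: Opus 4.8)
\textit{Strategy.} It suffices to show that for all $\cI$-spaces $X$ and $Y$ the canonical map
\[
  X_{h\cI}\times Y_{h\cI}\longrightarrow (X\boxtimes Y)_{h\cI},
\]
arising from the lax monoidality of the homotopy colimit with respect to the Day convolution, is a weak equivalence. Indeed, an $\cI$-equivalence $f\colon Y\to Z$ is then carried by $X\boxtimes-$ to a map which, after applying $(-)_{h\cI}$, is identified with $\id_{X_{h\cI}}\times f_{h\cI}$, and the latter is a weak equivalence.

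\textit{The cofibrant case.} If $X$ and $Y$ are projective-cofibrant then the external product $X\times Y$ is projective-cofibrant over $\cI\times\cI$, so the point-set left Kan extension $X\boxtimes Y=\mathrm{Lan}_{\concat}(X\times Y)$ coincides with the homotopy left Kan extension. Hence there are natural weak equivalences
\[
  (X\boxtimes Y)_{h\cI}\;\simeq\;\hocolim_{\bld n\in\cI}\,\hocolim_{(\bld a,\bld b,\phi)\in\concat/\bld n}X(\bld a)\times Y(\bld b)\;\simeq\;\hocolim_{\cI\times\cI}\bigl(X(\bld a)\times Y(\bld b)\bigr)\;\simeq\;X_{h\cI}\times Y_{h\cI}.
\]
The middle step is a cofinality argument: the Grothendieck construction $\int_{\bld n\in\cI}(\concat/\bld n)$ admits a projection to $\cI\times\cI$ whose comma categories have initial objects given by $\concat$, hence is homotopy cofinal; the last step uses Fubini for homotopy colimits over a product category together with the fact that homotopy colimits commute with products by a fixed simplicial set. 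One checks that this chain of equivalences identifies the map of the first paragraph with the identity.

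\textit{Removing cofibrancy.} I would deduce the general case from the cofibrant case by proving the stronger statement that, for \emph{every} $\cI$-space $X$, the functor $X\boxtimes-$ preserves object-wise weak equivalences; the theorem then follows by a two-out-of-three argument applied to the squares obtained from projective-cofibrant replacements of $X$, of $Y$ and $Z$, and of the map $f$. The engine for the object-wise statement is the computation, by a direct application of the co-Yoneda lemma, that $\cI(\bld m,-)\boxtimes Y$ is in degree $\bld n$ the disjoint union $\coprod_{\iota\colon\bld m\hookrightarrow\bld n}Y(\bld n\setminus\iota(\bld m))$, which is visibly homotopical in $Y$; the same holds for coproducts of representables and for orbits $\cI(\bld m,-)/G$ with $G\leq\Sigma_m$, since $G$ acts freely on the injections $\bld m\hookrightarrow\bld n$. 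For a general $X$ one would resolve $X$, e.g.\ by the bar construction $B_\bullet(\cI(-,-),\cI,X)$ — a simplicial $\cI$-space that is in each degree a coproduct of objects of the above form, whose degeneracies are summand inclusions, and whose realization is object-wise weakly equivalent to $X$ — and exploit that $X'\mapsto X'\boxtimes Y$ commutes with coproducts and geometric realizations, so that it carries this resolution to a levelwise-good simplicial $\cI$-space which is levelwise homotopical in $Y$.

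\textit{The main obstacle.} The only genuinely non-formal point is the last paragraph: since $X$ is not assumed cofibrant, $-\boxtimes Y$ need not be homotopical, and one must argue carefully that applying it to the resolution of $X$ still produces a homotopy colimit — equivalently, that the augmentation to $X\boxtimes Y$ remains a weak equivalence after $(-)_{h\cI}$. This is the step where the specific combinatorics of $\cI$ must be used (the fact that the degeneracy maps of the resolution are summand inclusions, and hence are preserved by $-\boxtimes Y$, is what keeps the resolution ``good''), and it is the part I expect to require real work; if a direct argument proves elusive, I would instead transport the whole question to the equivalent homotopy theory of tame $M$-simplicial sets, where the flabbiness of the injection monoid $M$ should make the analogous convolution product visibly homotopical. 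Everything else — the reduction in the first paragraph, the cofinality bookkeeping in the second, and the co-Yoneda computations — is routine.
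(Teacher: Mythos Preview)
Your reduction in the first paragraph is sound, and in fact the paper records the equivalent statement $(X\boxtimes Y)_{h\cI}\simeq X_{h\cI}\times Y_{h\cI}$ as a corollary once the theorem is proved. (A small point: the natural comparison map goes from $(X\boxtimes Y)_{h\cI}$ to the product, via the two projections $q^1,q^2$; the lax-monoidal map you write in the opposite direction only exists after replacing the point-set Kan extension by the homotopy one.)

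The genuine gap is exactly where you flag it. Your bar resolution $B_\bullet\to X$ is split only \emph{objectwise}: the extra degeneracy $s_{-1}\colon X(\bld m)\to B_0(\bld m)$, $x\mapsto(\id_{\bld m},x)$, is not natural in $\bld m$, so it is not a morphism of $\cI$-spaces and cannot simply be pushed through $-\boxtimes Y$. Consequently there is no direct reason for the augmented simplicial object $(B_\bullet\boxtimes Y)(\bld m)\to(X\boxtimes Y)(\bld m)$ to be split, and the identification $|B_\bullet\boxtimes Y|\simeq X\boxtimes Y$ unwinds to asking that $-\boxtimes Y$ preserve the level equivalence $|B_\bullet|\to X$ --- which is an instance of precisely what you are trying to prove. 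Your two-out-of-three square with a cofibrant replacement $QX\to X$ has the same circularity: both vertical maps $(QX\to X)\boxtimes Y$ and $(QX\to X)\boxtimes Z$ are again of the form you want to control. I would also caution that the auxiliary claim ``$X\boxtimes-$ preserves \emph{levelwise} weak equivalences for every $X$'' is stronger than what the theorem asserts and is not established anywhere in the paper.

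Your stated fallback is exactly what the paper does, and it is not a mere change of language: the paper builds a functor $(-)(\omega)=\colim_{\cN}\colon\sset^{\cI}\to\sset^M_{\tm}$ to tame $M$-spaces, proves it is \emph{strong} symmetric monoidal for the two box products and detects $\cI$-equivalences as $M$-equivalences, and then shows the $M$-box-product is fully homotopical. The last step is where the ``flabbiness'' you anticipate is made precise: for any tame $M$-set $X$ there is a bijection $X\boxtimes Y\cong X\times Y$ natural in $Y$ (coming from the fact that every injection $f\in M$ acts injectively with image exactly the elements supported on $f(\omega)$), and this, together with a disjoint-union decomposition of tame $M$-sets into pieces of the form $\cI_m\times_{\Sigma_m}A$, gives the homotopy invariance without any resolution argument. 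So the cofibrant-case bookkeeping in your proposal is not needed, and the real content --- which your proposal does not supply --- lies entirely in the combinatorics of tame $M$-sets.
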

This can be viewed as a discrete analog
of~\cite[Theorem\,1.3.2]{schwede:global}. The homotopy invariance of
the $\boxtimes$-product of $\cI$-spaces was previously established by
Schlichtkrull and the first author under the additional hypothesis
that one of the factors is flat cofibrant, see
\cite[Proposition\,8.2]{Sagave-S_diagram}. With
Theorem~\ref{thm:intro:boxtimes-fully-homotopical}, several flatness
hypotheses required in~\cite{Sagave-S_group-compl} turn out to be
unnecessary.  For example, Proposition 2.20, Proposition 2.23, Lemma
2.25, Proposition 2.27, Corollary 2.29, Proposition 4.2, Corollary 4.3
and Proposition 4.8 of \cite{Sagave-S_group-compl} hold without the
flatness hypotheses, and Theorem 1.2 and Lemma 4.12 of
\cite{Sagave-S_group-compl} hold without the cofibrancy hypothesis on
the commutative $\cI$-space monoid.

\subsection*{Tame \texorpdfstring{$M$}{M}-spaces}
Let $M$ be the monoid of injective self-maps of the set of
positive natural numbers. A \emph{tame $M$-set} is a set with an
$M$-action that satisfies a certain local finiteness condition (see
Definition~\ref{def:finitely supprted}). Actions of $M$ were studied by the second author
in \cite{schwede:homotopy} where it was shown that the homotopy groups
of symmetric spectra admit tame $M$-actions that for example detect
semistability.

A \emph{tame $M$-space} is a simplicial set with an $M$-action that is
tame in every simplicial degree. The resulting category
$\sset^{M}_{\tm}$ also has a convolution product $\boxtimes$ that is
analogous to the operadic product of $\mathcal L$-spaces. We say that
a map $f\colon X \to Y$ of tame $M$-spaces is an
\emph{$M$-equivalence} if it induces a weak equivalence
$f_{hM} \colon X_{hM} \to Y_{hM}$ of homotopy colimits (which are
given by bar constructions).

Our second main result states that $\boxtimes$ is fully homotopical. 

\begin{theorem}\label{thm:intro:boxtimes-M-fully-homotopical}
  Let $X$ be a tame $M$-space. Then the functor $X \boxtimes - $
  preserves $M$-equivalences between tame $M$-spaces.
\end{theorem}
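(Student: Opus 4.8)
The plan is to reduce the $M$-case to the $\cI$-case (Theorem \ref{thm:intro:boxtimes-fully-homotopical}), using the structural relationship between tame $M$-sets and $\cI$-sets. The key point is that a tame $M$-action is essentially the same data as an $\cI$-diagram: given a tame $M$-set $A$, the subsets of elements supported on $\bld{m} = \{1,\dots,m\}$ assemble into an $\cI$-set (functoriality in injections comes from the $M$-action), and conversely an $\cI$-set has a colimit along the inclusions $\bld{1}\hookrightarrow\bld{2}\hookrightarrow\cdots$ which carries a natural $M$-action, tame because every element comes from some finite stage. I would first make this into a precise statement: there is an equivalence (or at least a suitably structured adjunction) between $\sset^{M}_{\tm}$ and an appropriate category built from $\sset^{\cI}$ — presumably the category of $\cI$-spaces, or a localization thereof — and this equivalence is monoidal for the two $\boxtimes$-products and compatible with the two notions of homotopy colimit, so that it matches $M$-equivalences with $\cI$-equivalences. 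Such a comparison is surely already available in the literature of Sagave–Schwede on diagram spaces and tame actions, so I would cite it and reduce to checking that the structure maps are as claimed.

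Granting such a monoidal, weak-equivalence-preserving comparison $\sset^{\cI} \rightleftarrows \sset^{M}_{\tm}$, the second step is formal: for a tame $M$-space $X$, express $X\boxtimes -$ on tame $M$-spaces in terms of the corresponding $\cI$-space $X'$ and the functor $X'\boxtimes -$ on $\cI$-spaces, then invoke Theorem \ref{thm:intro:boxtimes-fully-homotopical}. Concretely: if $Y \to Z$ is an $M$-equivalence of tame $M$-spaces, transport to an $\cI$-equivalence $Y' \to Z'$, apply $X'\boxtimes -$ to get an $\cI$-equivalence $X'\boxtimes Y' \to X'\boxtimes Z'$, and transport back; monoidality of the comparison identifies this with $X\boxtimes Y \to X\boxtimes Z$ up to $M$-equivalence, and compatibility with homotopy colimits certifies that the result is an $M$-equivalence.

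The main obstacle I anticipate is that the comparison between $\sset^{M}_{\tm}$ and $\sset^{\cI}$ is probably \emph{not} an equivalence of categories on the nose — the $M$-side remembers less than the $\cI$-side (different $\cI$-spaces can have the same colimit with its $M$-action), so one only has an adjunction, and one must be careful about which direction is monoidal and whether the relevant homotopical information is preserved strictly or only after deriving. A cleaner route, avoiding any lossy comparison, may be to prove Theorem \ref{thm:intro:boxtimes-M-fully-homotopical} by mimicking the proof of Theorem \ref{thm:intro:boxtimes-fully-homotopical} directly: whatever combinatorial or cell-attachment argument establishes full homotopy invariance for $\boxtimes$ on $\cI$-spaces should have an $M$-equivariant analog, with the tameness hypothesis playing the role that finiteness of the objects $\bld{m}$ plays in $\cI$. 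I would first attempt the reduction via the comparison functor, since it is shorter, and fall back on the direct combinatorial argument — adapting the filtration of $\boxtimes$ by "number of overlapping coordinates" and the analysis of the associated pushouts — if the comparison turns out to lose too much. Either way, the genuinely new input beyond Theorem \ref{thm:intro:boxtimes-fully-homotopical} is the bookkeeping needed to keep the $M$-action tame throughout the argument.
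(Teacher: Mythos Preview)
Your proposal has the logical dependency backwards, and as written it is circular. In this paper, Theorem~\ref{thm:intro:boxtimes-fully-homotopical} (the $\cI$-case) is \emph{deduced from} Theorem~\ref{thm:intro:boxtimes-M-fully-homotopical} (the $M$-case), not the other way around: the functor $(-)(\omega)\colon \sset^{\cI}\to\sset^M_{\tm}$ is strong symmetric monoidal (Proposition~\ref{prop:strong monoidal}) and detects both $\cN$- and $\cI$-equivalences (Proposition~\ref{prop:omega translates}), so once the $M$-statement is in hand, the $\cI$-statement follows formally (Theorem~\ref{thm:box invariance}). Your plan to ``invoke Theorem~\ref{thm:intro:boxtimes-fully-homotopical}'' therefore presupposes what you are trying to prove.

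The paper's actual argument for Theorem~\ref{thm:intro:boxtimes-M-fully-homotopical} is direct and does not pass through $\cI$-spaces at all. The key step is Theorem~\ref{thm:invariance M}: for tame $M$-spaces $X,Y$ the map
\[
(p^1_{hM},p^2_{hM})\colon (X\boxtimes Y)_{hM}\ \to\ X_{hM}\times Y_{hM}
\]
is a weak equivalence; since the cartesian product of simplicial sets is fully homotopical, this immediately gives the result. Theorem~\ref{thm:invariance M} in turn rests on the structure theorem for tame $M$-sets (Theorem~\ref{thm:complement}), which decomposes every tame $M$-set as a disjoint union of pieces $\cI_m\times_{\Sigma_m}A_m$, reducing to the elementary computation $\cI_m\boxtimes\cI_n\cong\cI_{m+n}$ together with the contractibility of $(\cI_m)_{hM}$ and freeness of the symmetric group actions. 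There is no filtration by ``number of overlapping coordinates'' or cell-attachment analysis.

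Your instinct about the comparison functors is correct --- they do exist and are developed in Section~\ref{sec:flat-I} (Proposition~\ref{prop:flat-replacement}, Corollary~\ref{cor:adjunction-properties}) --- and your approach could be made non-circular by combining them with the \emph{alternative} proof of Theorem~\ref{thm:intro:boxtimes-fully-homotopical} sketched in Remark~\ref{rem:alternative-pf-boxtimes-fully-homotopical}, which appeals to \cite[Proposition~8.2]{Sagave-S_diagram} rather than to Theorem~\ref{thm:intro:boxtimes-M-fully-homotopical}. But that is a detour; the paper's route through the structure theorem is shorter and more self-contained.
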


This theorem can be viewed as a discrete analog
of~\cite[Theorem\,1.21]{schwede:orbispaces} and we use it to prove
Theorem~\ref{thm:intro:boxtimes-fully-homotopical}. It is also
interesting in itself: we show that the category of tame $M$-spaces is
equivalent to the reflective subcategory of the category of $\cI$-spaces
given by the flat $\cI$-spaces. This implies that tame $M$-spaces and
$M$-equivalences provide a model for the homotopy theory of spaces,
and that $E_{\infty}$ spaces can be represented by strictly
commutative monoids in $(\sset^{M}_{\tm} ,\boxtimes)$.
We also show that commutative $\boxtimes$-monoids of tame $M$-spaces
can be described as algebras over an injection operad, see Theorem \ref{thm:Iset and com}.

\subsection*{Presentably symmetric monoidal
  \texorpdfstring{$\infty$}{infinity}-categories} We now leave the
framework of specific multiplicative models 
for the homotopy theory of
spaces studied so far and consider general homotopy theories with a
symmetric monoidal product. One convenient way to encode these are the
\emph{symmetric monoidal $\infty$-categories} introduced by
Lurie~\cite{Lurie_HA}.  More precisely, we shall consider
\emph{presentably symmetric monoidal $\infty$-categories}. By
definition, these are presentable $\infty$-categories with a symmetric
monoidal structure that preserves colimits separately in each
variable.

When passing to underlying $\infty$-categories, combinatorial
symmetric monoidal model categories give rise to presentably symmetric
monoidal $\infty$-categories~\cite[Example\,4.1.3.6 and Proposition\,4.1.3.10]{Lurie_HA}.
Nikolaus and the first author showed
in~\cite{Nikolaus-S_presentably} that conversely, every presentably
symmetric monoidal $\infty$-category $\cC$ is represented by a
combinatorial symmetric monoidal model category $\cM$.  By
construction, $\cM$ is a left Bousfield localization of a certain
\emph{contravariant $\cI$-model structure} on an over category
$\sset^{\cI}/N$ with $N$ a commutative $\cI$-simplicial set. Here both
$N$ and the localization depend on $\cC$. The symmetric monoidal
product of $\cM$ is induced by the $\boxtimes$-product on
$\sset^{\cI}$ and the multiplication of $N$ and can therefore also be
viewed as a convolution product. Combining some of the results about
the interaction of $\cI$-spaces and tame $M$-spaces used to prove
Theorems~\ref{thm:intro:boxtimes-fully-homotopical}
and~\ref{thm:intro:boxtimes-M-fully-homotopical} with an analysis of
the contravariant $\cI$-model structure, we show that the convolution
product on $\cM$ is fully homotopical and thus provide the following
stronger variant of~\cite[Theorem\,1.1]{Nikolaus-S_presentably}.
\begin{theorem}\label{thm:presentably-sym-monoidal-intro}
  Every presentably symmetric monoidal $\infty$-category is
  represented by a simplicial, combinatorial and left proper symmetric
  monoidal model category with a fully homotopical monoidal product.
\end{theorem}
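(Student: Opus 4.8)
The plan is to reduce Theorem~\ref{thm:presentably-sym-monoidal-intro} to Theorem~\ref{thm:intro:boxtimes-fully-homotopical} together with a purely formal stability property of full homotopy invariance under monoidal left Bousfield localization. By \cite[Theorem\,1.1]{Nikolaus-S_presentably}, the $\infty$-category $\cC$ is represented by a combinatorial symmetric monoidal model category $\cM$ which arises from an overcategory $\sset^{\cI}/N$ — with $N$ a commutative $\cI$-simplicial set depending on $\cC$ — as a left Bousfield localization of the contravariant $\cI$-model structure, and whose monoidal product sends $(f\colon X\to N,\, g\colon Y\to N)$ to the composite $X\boxtimes Y \xrightarrow{f\boxtimes g} N\boxtimes N \xrightarrow{\mu} N$; in particular its underlying $\cI$-space is $X\boxtimes Y$. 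Since the simplicial, combinatorial and left proper structures are inherited from $\sset^{\cI}$ through the construction of $\cM$ in \cite{Nikolaus-S_presentably} (left Bousfield localizations preserve all of these), the only remaining point is the full homotopy invariance of this convolution product on $\cM$.

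I would first record the base case: the forgetful functor $\sset^{\cI}/N \to \sset^{\cI}$ creates the weak equivalences and fibrations of the $\cI$-model structure on $\sset^{\cI}/N$ induced from a symmetric monoidal $\cI$-model structure on $\sset^{\cI}$ (such as the flat one), so that the weak equivalences there are detected as $\cI$-equivalences of underlying $\cI$-spaces; since $X\boxtimes_N-$ has underlying functor $X\boxtimes-$, Theorem~\ref{thm:intro:boxtimes-fully-homotopical} gives at once that $\boxtimes_N$ is fully homotopical on that model structure. The formal ingredient is then: if $\mathcal{D}$ is a symmetric monoidal model category whose product $\otimes$ is fully homotopical and $\mathcal{D}'$ is a left Bousfield localization of $\mathcal{D}$ that is again a symmetric monoidal model category, then $\otimes$ is fully homotopical on $\mathcal{D}'$. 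To prove this, let $X$ be an object of $\mathcal{D}'$ and $f\colon Y\to Y'$ an $S$-local equivalence. A cofibrant replacement $X^{c}\xrightarrow{\sim}X$ in $\mathcal{D}$ produces, by full homotopy invariance in $\mathcal{D}$, weak equivalences $X^{c}\otimes Y\to X\otimes Y$ and $X^{c}\otimes Y'\to X\otimes Y'$ of $\mathcal{D}$, hence of $\mathcal{D}'$, so two-out-of-three reduces us to $X$ cofibrant. Next, choosing in $\mathcal{D}$ a cofibrant replacement $Y^{c}\xrightarrow{\sim}Y$, an acyclic fibration $Z\xrightarrow{\sim}Y'$ with $Z$ cofibrant, and a lift $f^{c}\colon Y^{c}\to Z$ of $f$ along it, the same argument applied to $X\otimes-$ reduces us to showing that $X\otimes f^{c}$ is an $S$-local equivalence; but now $X$, $Y^{c}$ and $Z$ are cofibrant, $f^{c}$ is an $S$-local equivalence between cofibrant objects, and $X\otimes-\colon\mathcal{D}'\to\mathcal{D}'$ is left Quillen because $\mathcal{D}'$ is a symmetric monoidal model category, so Ken Brown's lemma finishes the argument.

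It then remains to see that $\cM$ is obtained from the $\cI$-model structure on $\sset^{\cI}/N$ by a finite sequence of left Bousfield localizations, each of which is a symmetric monoidal model category with the product induced by $\boxtimes$ and the multiplication of $N$ (equivalently, each localization is compatible with $\boxtimes_N$, so that $X\boxtimes_N-$ stays left Quillen); granting this, iterating the stability statement from the base case above yields the full homotopy invariance of $\boxtimes_N$ on $\cM$. The hard part will be exactly this last step: producing a sufficiently explicit description of the contravariant $\cI$-model structure as such a (monoidal) localization and verifying the monoidality of the final localization defining $\cM$. This is where the analysis of the contravariant $\cI$-model structure and the interplay between $\cI$-spaces and tame $M$-spaces from the earlier sections are needed; the homotopical content, by contrast, is entirely packaged into Theorem~\ref{thm:intro:boxtimes-fully-homotopical}.
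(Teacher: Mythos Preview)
Your formal stability lemma for monoidal left Bousfield localizations is correct, and indeed the paper uses exactly this observation for the final localization step (from the positive contravariant $\cI$-model structure on $\sset^{\cI}/M^{\mathrm{rig}}$ to the model $\cM$). However, your reduction to Theorem~\ref{thm:intro:boxtimes-fully-homotopical} breaks down at the step you yourself flag as ``the hard part'': the positive contravariant $\cI$-model structure on $\sset^{\cI}/N$ is \emph{not} a left Bousfield localization of any Kan-based $\cI$-model structure, so your iterated-localization strategy cannot reach it.

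The obstruction is already visible without the $\cI$-direction. For a simplicial set $S$ that is not a Kan complex, the contravariant model structure on $\sset/S$ is not a localization of the slice Kan model structure: over $S=\Delta^1$, the inclusion $\{0\}\hookrightarrow\Delta^1$ (regarded as a morphism in $\sset/\Delta^1$) is a Kan weak equivalence, but both objects are right fibrant and have non-equivalent fibers over the vertex~$1$, so the map is not a contravariant equivalence. Passing to constant $\cI$-diagrams gives a Kan $\cI$-equivalence over $\const_{\cI}\Delta^1$ that is not a contravariant $\cI$-equivalence. In the situation of \cite{Nikolaus-S_presentably}, the base $N=M^{\mathrm{rig}}$ is levelwise only a quasi-category, so this phenomenon is unavoidable: Theorem~\ref{thm:intro:boxtimes-fully-homotopical} governs the wrong class of weak equivalences and does not feed into the argument in the way you propose.

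The paper circumvents this by proving full homotopy invariance \emph{directly} for the positive contravariant $\cI$-model structure (Proposition~\ref{prop:contravariant-I-hty-inv}), without invoking Theorem~\ref{thm:intro:boxtimes-fully-homotopical}. The mechanism is the flat replacement: the unit $\eta_X\colon X\to X^{\flat}$ is an $\cN$-isomorphism, $\boxtimes$ preserves $\cN$-isomorphisms (Corollary~\ref{cor:boxtimes-preserves-N-isos}), and $\cN$-isomorphisms are contravariant $\cI$-equivalences (Lemma~\ref{lem:N-isos-I-equivalences}); this reduces to the case of flat $X$, which is handled as in \cite[Proposition~3.18]{Nikolaus-S_presentably}. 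Only the final passage to $\cM$ uses your localization lemma.
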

The authors view this theorem as a surprising result since point-set
level models for multiplicative homotopy theories tend to be not fully
homotopical (or to be not known to have this property). It leads to
symmetric monoidal model categories which one may not have expected to
exist. For example, applying the theorem to the $\infty$-category of
spectra provides a symmetric monoidal model for the stable homotopy
category with a fully homotopical smash product. To the authors'
knowledge, none of the common symmetric monoidal models for the stable
homotopy category such as~\cite{ekmm, HSS, lydakis:simplicial, mmss}
is known to have this property (see also the questions below).  As
another example, Theorem~\ref{thm:presentably-sym-monoidal-intro}
implies that the homotopy theory underlying the derived category of a
commutative ring $k$ admits a fully homotopical model. The existence
of non-trivial $\mathrm{Tor}$-terms shows that in general, this fails
badly for the usual tensor product of chain complexes of
$k$-modules. As a last example, applying
Theorem~\ref{thm:presentably-sym-monoidal-intro} to the presentably
symmetric monoidal $\infty$-category of $\infty$-operads constructed
by Lurie~\cite[Chapter\,2.2.5]{Lurie_HA} leads to a fully homotopical
model for the category of topological operads with the Boardman--Vogt
tensor product.

The model category provided by~\cite[Theorem\,1.1]{Nikolaus-S_presentably}
also has the desirable feature that it
lifts to operad algebras over simplicial operads and that weak
equivalences of operads induce Quillen equivalences between categories
of operad algebras~\cite[Theorem\,2.5]{Nikolaus-S_presentably}. This
implies that like for positive model structures on diagram spectra or
diagram spaces, $E_{\infty}$ objects can be represented by strictly
commutative ones. Together with the full homotopy invariance, this
model thus provides a setup where homotopical algebra is particularly
simple.

\subsection*{Open questions}
The above discussion and the close connection
between $\cI$-spaces and symmetric spectra
lead to the following question.

\begin{question}
Is the smash product of symmetric spectra of simplicial sets
fully homotopical for the stable equivalences?
\end{question}

At the time of this writing, and to the best of the authors'
knowledge, this question is open, and the authors would like to see
this being sorted out. Analogously, one can consider symmetric spectra
of topological spaces, or the orthogonal case: 

\begin{question}
Is the smash product of orthogonal spectra 
fully homotopical for the stable equivalences?
\end{question}

Affirmative answers to these questions would make numerous cofibrancy
assumptions in applications of symmetric or orthogonal spectra
superfluous and thereby lead to substantial simplifications. 

\subsection*{Organization}
Section~\ref{sec:tame-M-sets} provides combinatorial results about tame $M$-sets
that are used in the rest of the paper.
In Section~\ref{sec:M-spaces} we study the homotopy theory of tame $M$-spaces
and prove Theorem~\ref{thm:intro:boxtimes-M-fully-homotopical}.
Section~\ref{sec:I-spaces} is about homotopical properties
of the $\boxtimes$-product of $\cI$-spaces
and contains the proof of Theorem~\ref{thm:intro:boxtimes-fully-homotopical}.
In Section~\ref{sec:flat-I} we identify tame $M$-spaces with flat $\cI$-spaces
and construct model structures on these categories.
Section~\ref{sec:presentably-symmetric-monoidal} discusses contravariant model structures
and provides the proof of Theorem~\ref{thm:presentably-sym-monoidal-intro}.
In Appendix~\ref{app:algebras-inj-operad} we identify commutative $\boxtimes$-monoids
in tame $M$-sets with  tame algebras over the injection operad,
and we supply an alternative characterization
of the $\boxtimes$-product of tame $M$-spaces as an operadic product. 

\subsection*{Conventions}
In this paper we follow a common abuse of language in that the term
\emph{$\cI$-space} means a functor from the injection category $\cI$ to the
category of \emph{simplicial sets} (as opposed to a functor to some
category of topological spaces). Similarly, an \emph{$M$-space} is a
simplicial set with an action of the injection monoid $M$.  The use of
simplicial sets is essential for several of our arguments, and we can
offer no new insight about the convolution products of $\cI$-topological
spaces and $M$-topological spaces.

\subsection*{Acknowledgments} This project was initiated by a question
asked by Christian Schlicht\-krull during the Problem Session of the
workshop ``Combinatorial Categories in Algebra and Topology'' held at
the Universit\"at Osnabr\"uck in November 2018. We would like to thank
the workshop organizers Tim R\"omer, Oliver R\"ondigs and Markus
Spitzweck for bringing us together at this occasion, and we would like
to thank Christian Schlicht\-krull for useful discussions.
The second author is a member of the Hausdorff Center for Mathematics,
funded by the Deutsche Forschungsgemeinschaft (DFG, German Research Foundation)
under Germany's Excellence Strategy (GZ 2047/1, project ID 390685813).

\section{The structure of tame \texorpdfstring{$M$}{M}-sets}\label{sec:tame-M-sets}
In this section we discuss tame $M$-sets, i.e., sets equipped with an
action of the injection monoid that satisfy a certain local finiteness
condition.  The main result is Theorem \ref{thm:complement}, which
says that every tame $M$-set $W$ decomposes as a disjoint union of
$M$-subsets arising in a specific way from certain $\Sigma_m$-sets
associated with $W$.  The arguments of this section are combinatorial
in nature, but they crucially enter into the homotopical analysis in the subsequent sections. 

\begin{defn}
  The {\em injection monoid} $M$ is the monoid of injective self-maps
  of the set $\omega=\{1,2,3,\dots\}$ of positive natural numbers with
  monoid structure given by composition. 
  An $M$-set is a set with a left $M$-action.
\end{defn}

\begin{defn}\label{def:finitely supprted}
  An element $x$ of an $M$-set $W$ is {\em supported} on a
  subset $A$ of $\omega$ if the following condition holds: for every
  injection $f\in M$ that fixes $A$ elementwise, the relation
  $ f x= x$ holds.
  An $M$-set $W$ is {\em tame} if every element is supported
  on some finite subset of $\omega$.
\end{defn}

We write $\set^M$ for the category of $M$-sets and $M$-equivariant maps,
and we denote by $\set^M_{\tm}$ the full subcategory of tame $M$-sets.

Clearly, if $x$ is supported on $A$ and $A\subseteq B\subseteq \omega$,
then $x$ is supported on $B$. Every element is supported on all of~$\omega$.
An element is supported on the empty set if and only if it is fixed by $M$.

\begin{prop}\label{prop:intersect supports}
Let $x$ be an element of an $M$-set $W$.
If $x$ is supported on two finite subsets $A$ and $B$ 
of $\omega$, then $x$ is supported on the intersection $A\cap B$.
\end{prop}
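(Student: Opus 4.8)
The plan is to reduce to the case of a bijection and then factor that bijection through the two given supports, exploiting that $\omega$ is infinite.

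\emph{Reduction to bijections.} Set $C=A\cap B$ and let $f\in M$ fix $C$ elementwise; we must show $fx=x$. Since $A$ is finite, the injection $f|_A\colon A\to\omega$ extends to a bijection $\beta$ of $\omega$, and since $C\subseteq A$ and $f$ fixes $C$, this $\beta$ also fixes $C$ elementwise. The composite $h:=\beta^{-1}\circ f$ lies in $M$ and fixes $A$ elementwise (because $\beta$ and $f$ agree on $A$), so $hx=x$ as $x$ is supported on $A$; hence $fx=(\beta h)x=\beta(hx)=\beta x$. It therefore suffices to prove $\beta x=x$ for every bijection $\beta$ of $\omega$ that fixes $C$ elementwise.

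\emph{Factoring the bijection.} I would show that any such $\beta$ equals $\delta^{-1}\gamma\alpha$ with $\alpha,\delta$ bijections fixing $A$ elementwise and $\gamma$ a bijection fixing $B$ elementwise. Write $P=A\setminus B$ and $Q=B\setminus A$; these are disjoint finite sets with $A=C\sqcup P$ and $B=C\sqcup Q$. Since $\omega\setminus(A\cup B)$ is infinite, one can choose a bijection $\delta$ of $\omega$ fixing $A$ elementwise and sending the finitely many points of $\beta(P)\setminus A$ into $\omega\setminus(A\cup B)$; then $\beta':=\delta\beta$ satisfies $\beta'(P)\cap Q=\emptyset$. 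Because $\beta'$ also fixes $C$, the finite set $(\beta')^{-1}(Q)$ is disjoint from $A$, so there is a bijection $\alpha$ of $\omega$ fixing $A$ elementwise with $\alpha\big((\beta')^{-1}(q)\big)=q$ for every $q\in Q$. Then $\gamma:=\beta'\alpha^{-1}$ fixes $Q$ elementwise by construction and fixes $C$ as a composite of bijections that do, hence fixes $B=C\sqcup Q$ elementwise; and $\delta\beta=\beta'=\gamma\alpha$ yields $\beta=\delta^{-1}\gamma\alpha$.

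\emph{Conclusion.} Each of $\alpha$, $\delta^{-1}$, $\gamma$ fixes $A$ or $B$ elementwise, and hence fixes $x$, since $x$ is supported on both $A$ and $B$; therefore $\beta x=x$, so $fx=\beta x=x$. As $f$ was an arbitrary injection fixing $A\cap B$ elementwise, $x$ is supported on $A\cap B$. The main obstacle is that a general element of $M$ is not invertible, which obstructs the naive conjugation or cancellation arguments; the reduction to bijections and the room-making step that produces $\delta$ are precisely what circumvents this, and the only genuinely fiddly part is checking the disjointness of the finite sets needed to build $\delta$ and $\alpha$.
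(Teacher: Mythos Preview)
Your proof is correct. Both your argument and the paper's hinge on the same idea: write the given map as a product of elements of $M$ each of which fixes $A$ or fixes $B$ elementwise, and then use the support hypotheses one factor at a time. The difference is in how this factorization is produced.

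The paper works directly with the original (possibly non-bijective) injection $f$. It introduces a specific involution $\sigma$ that swaps $B\setminus A$ with a translated copy $B\setminus A + m$ lying far to the right, together with a bijection $\gamma$ extending $f|_A$ and fixing that translated copy; then $f=\sigma\,(\sigma\gamma\sigma)\,(\sigma\gamma^{-1}f)$ exhibits $f$ as a product of three maps fixing $A$, $B$, $A$ respectively. Your route instead first reduces to the case of a bijection $\beta$ (by extending $f|_A$), and then builds the factorization $\beta=\delta^{-1}\gamma\alpha$ by a two-step ``room-making'' argument: first push $\beta(P)$ away from $Q$ using $\delta$, then correct on $Q$ using $\alpha$. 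The reduction to bijections is a nice simplification because inverses are then available throughout, and the existence of $\delta$ and $\alpha$ is a clean pigeonhole use of the infiniteness of $\omega\setminus(A\cup B)$; the paper's explicit involution achieves the same effect in one shot without the preliminary reduction. Either way, three factors suffice.
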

\begin{proof}
We let $f\in M$ be an injection that fixes $A\cap B$ elementwise.
We let $m$ be the maximum of the finite set $A\cup B\cup f(A)$ 
and define $\sigma\in M$ as the involution that interchanges 
$j$ with $j+m$ for all $j\in B-A$, i.e.,
\[ \sigma(j)\ = \
\begin{cases}
  j+m & \text{\ for $j\in B-A$, }\\
  j-m & \text{\ for $j\in (B-A)+m$, and}\\
  j  & \text{\ for $j\not\in (B-A)\cup ((B-A)+m)$.}
\end{cases}\]
In particular, the map $\sigma$ fixes the set $A$ elementwise.
Since $A$ and $f(A)$ are both disjoint from $B+m$, 
we can choose a bijection $\gamma\in M$ such that
\[ \gamma(j)\ = \
\begin{cases}
  f(j) & \text{\ if $j\in A$, and}\\
\ j  & \text{\ for $j\in B+m$.}
\end{cases}\]
Then $f$ can be written as the composition
\[ f \ = \  \sigma(\sigma \gamma\sigma)(\sigma \gamma^{-1} f)\ .\]
In this decomposition the factors $\sigma$ and $\sigma \gamma^{-1} f$
fix $A$ pointwise, and the factor $\sigma \gamma\sigma$
fixes $B$ pointwise. So 
\[\sigma x\ = \ (\sigma \gamma\sigma)x \ = \ (\sigma \gamma^{-1} f) x
\ = \ x \]
because $x$ is supported on $A$ and on $B$.
This gives
\[ f x \ = \ \sigma (\sigma \gamma \sigma) (\sigma \gamma^{-1} f) x  \ =\  x\  .  \]
 Since $f$ was any injection fixing $A\cap B$ elementwise,
the element $x$ is supported on $A\cap B$.
\end{proof}

\begin{defn}\label{def:support} 
Let $x$ be an element of an $M$-set.
The {\em support} of $x$ is the intersection of all finite subsets of $\omega$
on which $x$ is supported.  
\end{defn}

We write $\supp(x)$ for the support of $x$ and agree that
$\supp(x)=\omega$ if $x$ is not finitely supported. Proposition
\ref{prop:intersect supports} then shows that $x$ is supported on its
support $\supp(x)$.  It is important that in
Definition~\ref{def:support} the intersection is only over {\em
  finite} supporting subsets. Indeed every element is supported on the
set $\omega-\{j\}$ for every $j\in\omega$ (because the only injection
that fixes $\omega-\{j\}$ elementwise is the identity). So without the
finiteness condition the intersection in Definition~\ref{def:support}
would always be empty.

\begin{prop}\label{prop:finite support} 
Let $W$ be an $M$-set and $x\in W$.
  \begin{enumerate}[\em (i)]
  \item If the injections $f,g\in M$ agree on $\supp(x)$, 
    then $f x=g x$.
  \item For every injection $f\in M$, the relation 
    \[ \supp(f x)\ \subseteq\ f(\supp(x)) \]
    holds. If $x$ is finitely supported, then
    $\supp(f x)=f(\supp(x))$, and $f x$ is also finitely supported.
  \end{enumerate}
\end{prop}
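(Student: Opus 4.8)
The plan is to reduce both statements to the case where $x$ is finitely supported, and then to replace $f$ by an honest bijection that agrees with it on $\supp(x)$; this substitution is exactly what circumvents the fact that $f$ need not act injectively on $W$. First I would dispose of the case where $x$ is not finitely supported: then $\supp(x)=\omega$, the hypothesis of (i) forces $f=g$, and the assertions of (ii) are immediate. So from now on $A:=\supp(x)$ is finite, and I recall from Proposition~\ref{prop:intersect supports} (as noted in the text following it) that $x$ is then supported on $A$.

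For (i) I would argue as follows. Since $f$ is injective, $f|_A\colon A\to f(A)$ is a bijection of finite sets, and as $\omega\setminus A$ and $\omega\setminus f(A)$ are both countably infinite, it extends to a bijection $\beta\in M$. Because $f$ and $g$ both agree with $\beta$ on $A$, the composites $\beta^{-1}f$ and $\beta^{-1}g$ lie in $M$ and restrict to the identity on $A$; since $x$ is supported on $A$ this yields $(\beta^{-1}f)x=x=(\beta^{-1}g)x$, and applying $\beta$ gives $fx=\beta x=gx$. The byproduct $fx=\beta x$ with $\beta$ a bijection is the engine for (ii).

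For (ii) it then suffices to understand how supports transform under a bijection, so I would first establish the auxiliary fact that $\supp(\beta x)=\beta(\supp x)$ for every bijection $\beta\in M$. Here conjugation $\phi\mapsto\beta^{-1}\phi\beta$ is an automorphism of $M$ carrying the submonoid of injections fixing a subset $B$ pointwise onto the submonoid of injections fixing $\beta^{-1}(B)$ pointwise, so $\beta x$ is supported on $B$ precisely when $x$ is supported on $\beta^{-1}(B)$; intersecting over finite $B$---and using that a bijection commutes with intersections and preserves finiteness---gives the formula. Feeding this into $fx=\beta x$ we obtain $\supp(fx)=\supp(\beta x)=\beta(A)=f(A)=f(\supp x)$, the last step because $\beta|_A=f|_A$; and $f(A)$ is finite, so $fx$ is finitely supported. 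For the first, unconditional inclusion of (ii) one can also argue directly: if $\phi$ restricts to the identity on $f(\supp x)$, then $\phi f$ and $f$ agree on $\supp x$, so $\phi(fx)=fx$ by part (i), which shows $fx$ is supported on $f(\supp x)$.

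I expect the only genuine difficulty to be the reverse inclusion $f(\supp x)\subseteq\supp(fx)$: there is no reason a priori to be able to transport the defining property of $\supp(x)$ forward along $f$, precisely because $f$ can collapse elements of $W$. Recognizing that part (i) lets one trade $f$ for a bijection $\beta$ on the relevant finite set is the crux; once that is in place, the remainder is bookkeeping with the conjugation automorphism attached to $\beta$.
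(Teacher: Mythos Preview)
Your proof of (i) is the same as the paper's: choose a bijection $\beta\in M$ agreeing with $f$ and $g$ on $\supp(x)$, and use that $\beta^{-1}f$ and $\beta^{-1}g$ then fix $\supp(x)$ pointwise. For the equality in (ii) your route differs. You exploit the byproduct $fx=\beta x$ from (i) and prove directly, via the conjugation automorphism $\phi\mapsto\beta^{-1}\phi\beta$ of $M$, that a bijection transforms supports by $\supp(\beta x)=\beta(\supp x)$; this immediately gives $\supp(fx)=\beta(\supp x)=f(\supp x)$. The paper instead obtains the reverse inclusion by bootstrapping: it chooses $h\in M$ with $hf$ fixing $\supp(x)$ pointwise, so that $(hf)x=x$, and then applies the already-proved forward inclusion to the pair $(h,fx)$ to get $\supp(x)\subseteq h(\supp(fx))$, followed by a short manipulation using that $fh$ is the identity on $f(\supp(x))$. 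Your conjugation argument is a bit more conceptual and isolates the underlying reason; the paper's avoids proving a separate lemma by reusing the forward inclusion.

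One small caveat: calling the non-finitely-supported case of (ii) ``immediate'' is too quick. When $\supp(x)=\omega$ the claimed inclusion reads $\supp(fx)\subseteq f(\omega)$, and for a non-tame $M$-set this can fail---take $W=M$ with the left regular action, $x=\id_\omega$, and $f$ non-surjective; then $\supp(fx)=\omega\not\subseteq f(\omega)$. The paper's proof in fact only establishes that $fx$ is \emph{supported on} $f(\supp(x))$, and explicitly draws the containment of supports only once $f(\supp(x))$ is known to be finite; so the unconditional formulation in the statement is slightly loose, and you are not alone in glossing over it.
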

\begin{proof}
(i) If $x$ is not finitely supported, then $\supp(x)=\omega$, so $f=g$ and
there is nothing to show. If $x$ is finitely supported, we can choose a bijection $h\in M$ 
that agrees with $f$ and $g$ on $\supp(x)$. 
Then $h^{-1}f$ and $h^{-1}g$ fix the support of $x$ elementwise and hence 
$(h^{-1}f) x=x=(h^{-1}g) x$. Thus
\[ f x\ =\ h((h^{-1}f) x)\ = \ h x\ =\ h((h^{-1}g) x)\ = \ g x  \ .\]

(ii) 
We let $g\in M$ be an injection that fixes $f(\supp(x))$ elementwise.
Then $g f$ agrees with $f$ on $\supp(x)$, so
\[ g(f x) \ = \ (g f) x \ = \ f x \]
by part (i). We have thus shown that $f x$ is supported on $f(\supp(x))$.

If $x$ is finitely supported, then  $f(\supp(x))$ is finite, and 
this proves that $f x$ is finitely supported and 
$\supp(f x)\subseteq f(\supp(x))$. For the reverse inclusion we choose $h\in M$
such that $h f$ fixes $\supp(x)$ elementwise; then $(h f) x=x$. 
Applying the argument to $h$ and $f x$ (instead of $f$ and $x$) gives
\[ \supp(x)\ = \ \supp((h f) x)\ = \
 \supp(h (f x))\ \subseteq\ h(\supp(f x)) \ ,\]
and thus
\[ f(\supp(x))\ \subseteq\ f(h(\supp(f x))) \  = \ 
(f h)(\supp(f x)) \  = \ \supp(f x)\ . \]
The last equation uses that $f h$ is the identity on the set $f(\supp(x))$,
hence also the identity on the subset $\supp(f x)$. 
This proves the desired relation when $x$ is finitely supported.
\end{proof}

As for any monoid, the category of $M$-sets is complete and cocomplete,
and limits and colimits are created on underlying sets.
Proposition \ref{prop:finite support} (ii) shows that for every $M$-set $W$,
the subset
\[  W_\tau \ = \ \{x\in W \ |\  \text{$x$ has finite support}\}\]
is closed under the $M$-action, and hence a tame $M$-subset of $W$.
A morphism $u\colon V\to W$ of $M$-sets preserves supports in the sense of the containment relation
\[ \supp(u x)\ \subseteq \ \supp(x) \]
for all $x\in V$. In particular, $M$-linear maps send finitely
supported elements to finitely supported elements.  So if the
$M$-action on $V$ is tame, then every $M$-linear map $f\colon V\to W$ has
image in $W_\tau$. This shows part (i) of the following lemma whose
other parts are then formal consequences.

\begin{lemma}\phantomsection\label{lem:pushouts-seq-colimits-M-sets}
  \begin{enumerate}[\em (i)]
  \item The functor 
    \[ (-)_\tau \colon \set^M\ \to \ \set^M_{\tm}\ , \quad W\
    \longmapsto \ W_\tau\]
    is right adjoint to the inclusion, with the inclusion
    $W_\tau\to W$ being the counit of the adjunction.
  \item The category of tame $M$-sets is cocomplete, and the forgetful
    functor to sets preserves colimits.
  \item The category of tame $M$-sets is complete, and limits can be
    calculated by applying the functor $(-)_\tau$ to limits in the
    category of $M$-sets.\qed
  \end{enumerate}
\end{lemma}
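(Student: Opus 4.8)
The plan is to deduce all three statements from the one fact recalled just above the lemma: every $M$-linear map is support-decreasing, hence any $M$-linear map out of a tame $M$-set has image contained in the tame part of its target. Write $\iota\colon \set^M_{\tm}\to\set^M$ for the inclusion functor.

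For (i), I would argue as follows. Given a tame $M$-set $V$ and an arbitrary $M$-set $W$, an $M$-linear map $f\colon \iota V\to W$ satisfies $\supp(f(x))\subseteq\supp(x)$ for every $x\in V$, and the right-hand side is finite because $V$ is tame; so $f$ factors uniquely through the inclusion $W_\tau\hookrightarrow W$. Restricting the codomain therefore gives a bijection
\[
\Hom_{\set^M}(\iota V,W)\ \cong\ \Hom_{\set^M_{\tm}}(V,W_\tau),
\]
which is evidently natural in $V$ and $W$; this is the desired adjunction $\iota\dashv(-)_\tau$, its counit is the inclusion $W_\tau\hookrightarrow W$, and its unit is the identity (since $V_\tau=V$ when $V$ is tame). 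In particular $\set^M_{\tm}$ is a coreflective subcategory of $\set^M$.

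For (ii), let $D\colon J\to\set^M_{\tm}$ be a small diagram and form the colimit $C$ of $\iota D$ in $\set^M$; since colimits of $M$-sets are created on underlying sets, $C$ is the set-theoretic colimit with induced $M$-action, so every element of $C$ is the image of some $x\in D(j)$ under the canonical $M$-linear map $D(j)\to C$ and therefore has support contained in the finite set $\supp(x)$. Hence $C$ is tame, and since $\iota$ is fully faithful a colimiting cocone in $\set^M$ with tame vertex is also colimiting in $\set^M_{\tm}$; this proves cocompleteness, and since $\set^M\to\set$ preserves colimits so does $\set^M_{\tm}\to\set$. For (iii), I would invoke the standard computation for a fully faithful right adjoint: for a small diagram $D\colon J\to\set^M_{\tm}$ with $L=\lim(\iota D)$ formed in $\set^M$, and any tame $M$-set $A$,
\[
\Hom_{\set^M_{\tm}}(A,L_\tau)\ \cong\ \Hom_{\set^M}(A,L)\ \cong\ \lim_{j\in J}\Hom_{\set^M}(A,D(j))\ \cong\ \lim_{j\in J}\Hom_{\set^M_{\tm}}(A,D(j)),
\]
using (i) for the first isomorphism and fullness of $\iota$ for the last, so that $L_\tau$ carries the universal property of $\lim D$ in $\set^M_{\tm}$.

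There is no serious obstacle here; the only points needing attention are bookkeeping ones — keeping straight that the inclusion is the \emph{left} adjoint, so that colimits are inherited from $\set^M$ unchanged while limits must be corrected by applying $(-)_\tau$ — together with the observation that the single non-formal ingredient, used exactly to see that ambient colimits of tame $M$-sets remain tame, is the support-decreasing property of $M$-linear maps.
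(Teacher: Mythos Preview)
Your proof is correct and follows exactly the approach the paper indicates: the paper states that part (i) follows from the support-decreasing property of $M$-linear maps and that parts (ii) and (iii) are ``formal consequences,'' providing no further details. You have faithfully and correctly spelled out these formal consequences, including the direct verification that ambient colimits of tame $M$-sets remain tame and the standard representability argument for limits in a coreflective subcategory.
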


Our next aim is to prove that every tame $M$-set is a disjoint union of $M$-sets
of the form $\cI_m\times_{\Sigma_m}A_m$ for varying $m\geq 0$ and $\Sigma_m$-sets $A_m$.

\begin{prop}\label{prop:algebraic consequences}
  Let $W$ be a tame $M$-set and $f\in M$. Then the action of $f$ is an
  injective map $W\to W$, and its image consists precisely of those
  elements that are supported on the set $f(\omega)$.
\end{prop}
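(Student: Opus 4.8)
The plan is to prove the two assertions — injectivity of the $f$-action and the identification of its image — separately, each by exploiting the support calculus already established in Propositions~\ref{prop:finite support} and~\ref{prop:algebraic consequences}'s predecessors.

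\textbf{Injectivity.} Suppose $f x = f y$ for $x,y\in W$. Since $W$ is tame, both $x$ and $y$ are finitely supported, hence so is $S:=\supp(x)\cup\supp(y)$. Pick an injection $h\in M$ with $h f$ the identity on $f(S)$; such an $h$ exists because $f$ is injective, so $f|_S$ has a left inverse on the finite set $f(S)$ that can be extended to all of $\omega$. Then $h f$ agrees with the identity on $\supp(x)$, so by Proposition~\ref{prop:finite support}(i) we get $(h f) x = x$, and likewise $(h f) y = y$. Applying $h$ to the assumed equality $f x = f y$ therefore yields $x = (h f) x = h(f x) = h(f y) = (h f) y = y$.

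\textbf{Image.} First, every element in the image is supported on $f(\omega)$: for $x\in W$, Proposition~\ref{prop:finite support}(ii) gives $\supp(f x)\subseteq f(\supp(x))\subseteq f(\omega)$. Conversely, suppose $w\in W$ is supported on $f(\omega)$; we must produce $x$ with $f x = w$. Choose any injection $g\in M$ with $g f$ equal to the identity on $\omega$ — possible since $f$ is injective, so a left inverse of $f$ on $f(\omega)$ exists and extends to an element of $M$ — and set $x := g w$. The claim is that $f x = f g w = w$. Now $f g$ is an injection of $\omega$ that restricts to the identity on $f(\omega)$ (because on $f(j)$ we have $fg(f(j)) = f(gf(j)) = f(j)$), so $f g$ fixes $f(\omega)$ elementwise; since $w$ is supported on $f(\omega)$, this gives $(f g) w = w$, i.e.\ $f x = w$ as desired.

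\textbf{Main obstacle.} The arguments are short once the right auxiliary injections are in hand, so the only real point requiring care is the repeated construction of a left inverse to $f$ inside $M$: given an injection $f\colon\omega\to\omega$, one needs $g\in M$ with $g\circ f = \id_\omega$, and for the injectivity step one needs the weaker local version $h f$ fixing a prescribed finite set pointwise. Both follow because $f$ being injective means $f\colon\omega\to f(\omega)$ is a bijection onto its image, whose set-theoretic inverse $f(\omega)\to\omega$ can be extended to a self-injection of $\omega$ (for instance by any injection of the complement $\omega - f(\omega)$ into $\omega - \omega = \emptyset$ when $f$ is surjective, or more generally into a chosen infinite subset disjoint from the image of the partial inverse); the infinitude of $\omega$ is exactly what makes this extension possible. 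Once this bookkeeping is dispatched, the rest is a direct application of Proposition~\ref{prop:finite support}(i) and (ii).
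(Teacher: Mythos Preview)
Your injectivity argument and the forward inclusion for the image match the paper's proof (modulo a slip: you write ``$hf$ the identity on $f(S)$'' where you need, and your justification actually gives, $hf$ the identity on~$S$).

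The genuine gap is in the converse direction for the image. You choose $g\in M$ with $gf=\id_\omega$, but no such $g$ exists unless $f$ is already a bijection: the relation $gf=\id_\omega$ forces $g$ to be surjective, and an injective surjection $\omega\to\omega$ is a bijection, whence $f=g^{-1}$ is one too. Your ``Main obstacle'' paragraph does not repair this --- the partial inverse $f^{-1}\colon f(\omega)\to\omega$ already surjects onto all of $\omega$, so there is no ``infinite subset disjoint from the image of the partial inverse'' into which the complement $\omega\setminus f(\omega)$ could be injected.

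The paper avoids this by exploiting tameness more directly: since $w$ has \emph{finite} support, one only needs $fg$ to be the identity on the finite set $\supp(w)$. For that one sets $g(j)=f^{-1}(j)$ for $j\in\supp(w)$ and extends this finite injective partial assignment to a bijection of~$\omega$; then $fg$ fixes $\supp(w)$ elementwise, so $(fg)w=w$ and $w=f(gw)$ lies in the image. A global left inverse to $f$ inside $M$ is unavailable, but a left inverse on any prescribed finite subset of $f(\omega)$ always is.
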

\begin{proof} For injectivity we consider any $x,y\in W$ with
  $f x=f y$.  Since $f$ is injective and $x$ and $y$ are finitely
  supported, we can choose $h\in M$ such that $h f$ is the identity on
  the support of $x$ and the support of $y$.  Then $x=h f x=h f y=y$.

  It remains to identify the image of the action of $f$. For all $x\in W$ we have 
  \[ \supp(f x)\ \subseteq \ f(\supp(x))\ \subseteq \ f(\omega) \ ,\]
  so $f x$ is supported on $f(\omega)$.
  Now suppose that $z\in W$ is supported on $f(\omega)$.
  Then $f$ restricts to a bijection from $f^{-1}(\supp(z))$ to $\supp(z)$.
  We choose a bijection $g\in M$ such that $f g$ is the identity on $\supp(z)$.
  Then $f g z=z$, and hence $z$ is in the image of the action of $f$.
\end{proof}

Proposition \ref{prop:algebraic consequences} can fail for non-tame $M$-sets.
An example is given by letting $f\in M$ act on the set
$\{0,1\}$ as the identity if the image of $f$ has finite complement,
and setting $f(0)=f(1)=0$ if its image has infinite complement.

\begin{defn}
  Given a set $A$, we write $\cI_A$ for the $M$-set of injective maps from $A$ to $\omega$,
  where the monoid $M$ acts by postcomposition. 
\end{defn}

\begin{eg} \label{ex:I_m} 
  The $M$-set $\cI_{\emptyset}$ has only one element, so the $M$-action is
  necessarily trivial. If $A$ is finite and non-empty, then $\cI_A$ is countably
  infinite and the $M$-action is non-trivial, but tame: the support of
  an injection $\alpha\colon A\to\omega$ is its image $\alpha(A)$.

  If $A$ is a finite subset of $\omega$, then the $M$-set $\cI_A$ represents the functor
  of taking elements with support in $A$:
  the inclusion $\iota_A\colon A\to\omega$ is supported on $A$, and for every $M$-set $W$,
  the evaluation map
  \[ \Hom_M(\cI_A, W) \ \xra{\ \iso\ }\ \{x\in W\colon \supp(x)\subseteq A\}\ ,
    \quad \varphi \ \longmapsto \ \varphi(\iota_A) \]
  is bijective.
\end{eg}
  
 When $A=\mathbf m=\{1,\dots,m\}$, we write $\cI_m=\cI_{\mathbf m}$.
 The $M$-set $\cI_m$ comes with a commuting right action of the symmetric group $\Sigma_m$
  by precomposition. So we can -- and will -- view 
  $\cI_m$ as a $\Sigma_m$-object in the category of tame $M$-sets.
  If $K$ is any left $\Sigma_m$-set, we can form the tame $M$-set
  $\cI_m\times_{\Sigma_m} K$
  by coequalizing the two $\Sigma_m$-actions on the product
  $\cI_m\times K$. This construction yields a functor
  \[ \cI_m\times_{\Sigma_m} - \colon \set^{\Sigma_m} \ \to \ \set^M_{\tm}\ .\] 

Now we prove that every tame $M$-set is a disjoint union of $M$-sets
that arise from the functors $\cI_m\times_{\Sigma_m}-$ for varying $m\geq 0$.
For an $M$-set $W$ and $m\geq 0$, we write
\[  s_m(W)\ = \ \{ x\in W \ : \ \supp(x)=\bld{m}\}\]
for the subset of elements whose support is the set $\bld{m}=\{1,\dots,m\}$.
This is a $\Sigma_m$-invariant subset of $W$ by Proposition \ref{prop:finite support} (ii).
For example, $s_0(W)$ is the set of $M$-fixed elements. 
We alert the reader that
the $\Sigma_m$-sets $s_m(W)$ are {\em not} functors in $W$:
a morphism $f\colon V\to W$ of $M$-sets may decrease the support,
and hence it need not take $s_m(V)$ to $s_m(W)$.

We define a morphism of $M$-sets
\begin{equation} \label{eq:define_psi}
  \psi_m^W\colon \cI_m\times_{\Sigma_m}s_m(W)\ \to\  W
  \text{\qquad by\qquad}  [\alpha,x]\ \longmapsto \ \tilde\alpha x\ ,
\end{equation}
where $\tilde\alpha\in M$ is any injection that extends
$\alpha\colon \mathbf m\to\omega$.  This assignment is well-defined by
Proposition \ref{prop:finite support} (i).

\begin{theorem}\label{thm:complement}
  For every tame $M$-set $W$, the morphisms $\psi^W_m$ assemble into an isomorphism
  of $M$-sets
  \[ \coprod_{m\geq 0} \cI_m\times_{\Sigma_m} s_m(W)\ \xra{\ \iso \ } \ W\ .\]
\end{theorem}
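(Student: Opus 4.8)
The plan is to show that the map $\coprod_{m\geq 0}\psi^W_m$ is both surjective and injective, working entirely on underlying sets (which suffices, since colimits of $M$-sets are created on underlying sets, so the coproduct on the left is just the disjoint union of the underlying sets of $\cI_m\times_{\Sigma_m}s_m(W)$).

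For surjectivity, I would take an arbitrary $x\in W$. Since $W$ is tame, $x$ is finitely supported, say $\supp(x)=\{a_1<\dots<a_m\}$ for some $m\geq 0$. Let $\beta\in M$ be any injection with $\beta(i)=a_i$ for $i\in\bld m$ (for instance extend the monotone bijection $\bld m\to\supp(x)$). By Proposition~\ref{prop:algebraic consequences} (or directly by Proposition~\ref{prop:finite support}(ii) and its proof, using that the action of $\beta$ restricts to a bijection onto the elements supported on $\beta(\omega)$), there is a unique $y\in W$ with $\beta y = x$; concretely, choose $g\in M$ with $g\beta$ the identity on $\supp(x)$ and put $y=gx$. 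Then $\supp(y)=\supp(g x)=\bld m$: indeed $\supp(y)\subseteq g(\supp(x))\subseteq\bld m$ by the containment, while $\beta y=x$ forces $\supp(x)\subseteq\beta(\supp(y))$ by the same containment applied to $\beta$ and $y$, whence $\bld m=\beta^{-1}(\supp x)\subseteq\supp(y)$. So $y\in s_m(W)$, and writing $\alpha=\iota\circ(\text{monotone bijection }\bld m\to\supp x)$ — i.e.\ $\alpha=\beta|_{\bld m}$ — we get $\psi^W_m[\alpha,y]=\tilde\alpha y=\beta y=x$. Hence the map is surjective.

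For injectivity I would argue as follows. First, elements of $s_m(W)$ have support exactly $\bld m$, and by Proposition~\ref{prop:finite support}(ii) any $\tilde\alpha x$ with $\alpha\colon\bld m\to\omega$ injective and $x\in s_m(W)$ has support exactly $\alpha(\bld m)$, in particular of cardinality $m$; so $\psi^W_m$ and $\psi^W_n$ have disjoint images for $m\neq n$, and it remains to see each $\psi^W_m$ is injective. Suppose $\psi^W_m[\alpha,x]=\psi^W_m[\alpha',x']$, i.e.\ $\tilde\alpha x=\tilde\alpha' x'$ for chosen extensions $\tilde\alpha,\tilde\alpha'\in M$. Taking supports and using $\supp(x)=\supp(x')=\bld m$ gives $\alpha(\bld m)=\supp(\tilde\alpha x)=\supp(\tilde\alpha' x')=\alpha'(\bld m)$. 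Therefore $\gamma:=(\alpha')^{-1}\circ\alpha\colon\bld m\to\bld m$ is a well-defined permutation, i.e.\ $\gamma\in\Sigma_m$, and $\alpha=\alpha'\circ\gamma$. Then $[\alpha,x]=[\alpha'\gamma,x]=[\alpha',\gamma x]$ in $\cI_m\times_{\Sigma_m}s_m(W)$, so it suffices to show $\gamma x=x'$. Now $\tilde\alpha$ and $\widetilde{\alpha'}\circ\gamma$ (where we regard $\gamma$ as a permutation in $M$ fixing everything outside $\bld m$) agree on $\bld m=\supp(x)$, so by Proposition~\ref{prop:finite support}(i), $\tilde\alpha x=\widetilde{\alpha'}(\gamma x)$; combined with $\tilde\alpha x=\widetilde{\alpha'}x'$ and the injectivity of the action of $\widetilde{\alpha'}$ on the tame $M$-set $W$ (Proposition~\ref{prop:algebraic consequences}), we conclude $\gamma x=x'$. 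Hence $[\alpha,x]=[\alpha',x']$ and $\psi^W_m$ is injective.

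Finally I would remark that the map is visibly $M$-equivariant by construction of $\psi^W_m$ in~\eqref{eq:define_psi}, so a bijective equivariant map is an isomorphism of $M$-sets, completing the proof. The main obstacle I anticipate is the bookkeeping in the injectivity step — keeping straight the distinction between the finite injections $\alpha,\alpha'\colon\bld m\to\omega$, their chosen extensions $\tilde\alpha,\widetilde{\alpha'}\in M$, and the permutation $\gamma$ viewed simultaneously in $\Sigma_m$ and in $M$ — together with the repeated, careful use of Proposition~\ref{prop:finite support} to pin down supports exactly (not merely up to containment); the surjectivity and equivariance parts should be routine once Proposition~\ref{prop:algebraic consequences} is invoked.
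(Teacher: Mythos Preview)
Your proof is correct and follows essentially the same approach as the paper's: both decompose $W$ according to the cardinality of the support, establish surjectivity by pulling an element back along an extension of a bijection $\bld m\to\supp(x)$, and establish injectivity by comparing supports to extract a permutation in $\Sigma_m$ and then using injectivity of the $M$-action. One small slip: in the surjectivity step you want $\beta g$ (not $g\beta$) to be the identity on $\supp(x)$, so that $\beta y=\beta g x=x$ and $g(a_i)=i$ forces $g(\supp(x))=\bld m$; the rest of your argument already implicitly uses this corrected version.
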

\begin{proof}
  We write
  \[  C_m(W)\ = \ \{ x\in W \ : \ |\supp(x)|=m\}\]
  for the subset of elements whose support has cardinality $m$.
  This is an $M$-invariant subset by Proposition \ref{prop:finite support} (ii).
  Since the $M$-action on $W$ is tame, $W$ is the disjoint union of
  the $M$-subsets $C_m(W)$ for $m\geq 0$.
  The image of $\psi^W_m$ is contained in $C_m(W)$, so it remains to show that the morphism
  \[ \psi_m^W\colon \cI_m\times_{\Sigma_m}s_m(W)\ \to\  C_m(W)\]
  is an isomorphism for every $m\geq 0$.
   
  For surjectivity we consider any element $x\in C_m(W)$.
  Since the support of $x$ has cardinality $m$, we can choose a bijection $h\in M$ such that
  $h(\supp(x))= \{1,\dots,m\}$. Then $\supp(h x)=h(\supp(x))=\{1,\dots,m\}$
  by Proposition \ref{prop:finite support} (ii), so $h x$ belongs to $s_m(W)$.
  Moreover,
  \[ \psi_m^W[h^{-1}|_{\{1,\dots,m\}}, h x] \ = \ h^{-1}(h x)\ = \ x\ ,\]
  so the map $\psi_m^W$ is surjective.
  
  For injectivity we consider $f,g\in \cI_m$ and $x,y\in s_m(W)$
  such that $\psi_m^W[f,x]=\psi_m^W[g, y]$.  Then
  \[ \supp(\psi_m^W[f,x]) \ = \ f(\supp(x))\ = \  f(\{1,\dots,m\})\ ,\]
  and similarly $\supp(\psi_m^W[g,y])= g(\{1,\dots,m\})$.
  So the class $\psi_m^W[f,x]=\psi_m^W[g,y]$ is supported on the
  intersection of $f(\{1,\dots,m\})$ and $g(\{1,\dots,m\})$.
  Since the support of $\psi_m^W[f,x]$ has cardinality $m$,
  we conclude that $f(\{1,\dots,m\})=g(\{1,\dots,m\})$.  There is thus a
  unique element $\sigma\in\Sigma_m$ such that $g=f\sigma$.  We extend
  $\sigma$ to an element $\tilde\sigma\in M$ by fixing all numbers
  greater than $m$.

  We choose a bijection $h\in M$ that extends $f$; then
  $h \tilde\sigma$ extends $f\sigma=g$.  Hence
  \[ x \ = \ h^{-1}(h x)\ = \ h^{-1}\psi_m^W[f,x]\ =\ h^{-1}\psi_m^W[g, y] \ = \ h^{-1} h \tilde \sigma y\
    \ = \ \tilde \sigma y \ . \]
  So we conclude that
  \[ [f,x] \ = \ [f,\tilde\sigma y] \ = \ [f\sigma,y]\ = \ [g,y]\ . \]
  This proves that the map $\psi_m^W$ is injective, hence bijective.
\end{proof}

We now define the box product of $M$-sets.

\begin{defn}\label{def:box-product}
  Let $X$ and $Y$ be $M$-sets. The {\em box product} $X\boxtimes  Y$
  is the subset of the product consisting of those pairs $(x,y)\in X\times Y$
  such that $\supp(x)\cap\supp(y)=\emptyset$.
\end{defn}

As we explain in Proposition \ref{prop:convolution versus operadic},
the box product of $M$-sets is closely related to an {\em operadic product}.

\begin{prop}\label{prop:describe operadic}
  Let $X$ and $Y$ be $M$-sets. 
  \begin{enumerate}[\em (i)]
  \item 
    The box product $X\boxtimes Y$ is an $M$-invariant subset of $X\times Y$.
  \item If $X$ and $Y$ are tame, then so is $X\boxtimes  Y$.
\end{enumerate}
\end{prop}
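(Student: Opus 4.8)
The plan is to verify the two assertions directly from the definitions, using the support-theoretic results already established.

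For part (i), I would take a pair $(x,y)\in X\boxtimes Y$, so that $\supp(x)\cap\supp(y)=\emptyset$, and an arbitrary $f\in M$. The action of $f$ sends $(x,y)$ to $(f x, f y)$, so I must show $\supp(f x)\cap\supp(f y)=\emptyset$. By Proposition \ref{prop:finite support} (ii) we have $\supp(f x)\subseteq f(\supp(x))$ and $\supp(f y)\subseteq f(\supp(y))$, hence
\[ \supp(f x)\cap\supp(f y)\ \subseteq\ f(\supp(x))\cap f(\supp(y))\ =\ f\bigl(\supp(x)\cap\supp(y)\bigr)\ =\ f(\emptyset)\ =\ \emptyset\ ,\]
where the first equality uses that $f$ is injective. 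So $(f x, f y)\in X\boxtimes Y$, proving $X\boxtimes Y$ is $M$-invariant.

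For part (ii), assume $X$ and $Y$ are tame and take $(x,y)\in X\boxtimes Y$. Then $x$ is finitely supported in $X$ and $y$ is finitely supported in $Y$; I claim $(x,y)$ is supported on the finite set $\supp(x)\cup\supp(y)$. Indeed, if $f\in M$ fixes $\supp(x)\cup\supp(y)$ elementwise, then in particular $f$ fixes $\supp(x)$ elementwise, so $f x=x$ (since $x$ is supported on $\supp(x)$ by Proposition \ref{prop:intersect supports} and the ensuing remark), and likewise $f y=y$; hence $f\cdot(x,y)=(f x,f y)=(x,y)$. Thus every element of $X\boxtimes Y$ is supported on a finite subset of $\omega$, so $X\boxtimes Y$ is tame.

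There is no real obstacle here: both statements are immediate consequences of the functoriality of support under the $M$-action recorded in Proposition \ref{prop:finite support}, together with the observation that a finite union of finite supporting sets is again a finite supporting set. The only point requiring a moment's care is the use of injectivity of $f$ to commute $f$ past the intersection in part (i); without injectivity one would only get $f(\supp(x)\cap\supp(y))\subseteq f(\supp(x))\cap f(\supp(y))$, which is the wrong direction, but the containments from Proposition \ref{prop:finite support} (ii) are arranged so that this is exactly what is needed.
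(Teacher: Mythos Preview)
Your proof is correct and follows essentially the same approach as the paper's own proof. The paper is more terse: for (i) it records the identical chain of containments, and for (ii) it simply invokes the relation $\supp_{X\times Y}(x,y)=\supp_X(x)\cup\supp_Y(y)$, whereas you explicitly verify the inclusion $\supp_{X\times Y}(x,y)\subseteq\supp_X(x)\cup\supp_Y(y)$, which is the direction actually needed for tameness.
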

\begin{proof}
  (i) For all $(x,y)\in X\times Y$ and all $f\in M$ we have
  \[ \supp(f x)\cap \supp(f y) \ \subseteq \ f(\supp( x))\cap f(\supp(y)) \
    = \ f(\supp( x)\cap \supp( y)) \ .\]
  So the pair $f(x,y)=(f x,f y)$ belongs to $X\boxtimes  Y$
  whenever $(x,y)$ does.
  Part (ii) follows from the relation
  $\supp_{X\times Y}(x,y) = \supp_X(x)\cup \supp_Y(y)$.
\end{proof}

The associativity, symmetry and unit isomorphisms of the cartesian product
of $M$-sets clearly restrict to the box product;
for example, the associativity isomorphism is given by
\[ (X\boxtimes Y)\boxtimes Z \ \xrightarrow{\iso} \
  X\boxtimes(Y\boxtimes Z) \ ,\quad ((x,y),z)\ \longmapsto\ (x,(y,z))\ .
\]
Hence they inherit the coherence conditions required for a symmetric monoidal product.
We thus conclude:

\begin{prop}
  The box product is a symmetric monoidal structure on the category of $M$-sets
  with respect to the associativity, symmetry and unit isomorphisms inherited
  from the cartesian product.
  The box product restricts to a symmetric monoidal structure on the category of
  tame $M$-sets.
  Every one-element $M$-set is a unit object for the box product.\qed
\end{prop}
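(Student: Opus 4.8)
The plan is to assemble the observations made in the paragraphs immediately preceding the statement. \emph{Bifunctoriality:} first I would note that $\boxtimes$ is a bifunctor $\set^M\times\set^M\to\set^M$. Proposition~\ref{prop:describe operadic}(i) says $X\boxtimes Y$ is an $M$-subset of $X\times Y$, and since any $M$-linear map $u$ satisfies $\supp(ux)\subseteq\supp(x)$, the product map $u\times v$ carries pairs with disjoint supports to pairs with disjoint supports and hence restricts to $X\boxtimes Y\to X'\boxtimes Y'$; functoriality in both variables is then inherited from that of $\times$.

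\emph{Structure isomorphisms:} next I would verify that the associativity, symmetry and unit isomorphisms of $(\set^M,\times)$ restrict to $\boxtimes$. Symmetry is immediate, the defining condition $\supp(x)\cap\supp(y)=\emptyset$ being symmetric. For associativity one uses the identity $\supp_{X\times Y}(x,y)=\supp(x)\cup\supp(y)$ already invoked in the proof of Proposition~\ref{prop:describe operadic}: under it the subsets of $X\times Y\times Z$ cutting out $(X\boxtimes Y)\boxtimes Z$ and $X\boxtimes(Y\boxtimes Z)$ both become $\{(x,y,z) : \supp(x),\supp(y),\supp(z)\text{ pairwise disjoint}\}$, so the cartesian associator carries one onto the other. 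For the unit, a one-element $M$-set $*$ has necessarily trivial action, so its element is $M$-fixed and thus has empty support; therefore $*\boxtimes Y=\{*\}\times Y$ on the nose, and the cartesian unit isomorphism restricts to $*\boxtimes Y\xrightarrow{\iso}Y$, naturally in $Y$, and likewise on the other side. This already proves the last sentence of the statement.

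\emph{Coherence and the tame case:} the only step that is not purely formal is the transfer of the pentagon, hexagon and triangle axioms from $\times$ to $\boxtimes$. Each iterated box product of $M$-sets sits, compatibly, as a subset of the corresponding iterated cartesian product, and by the previous paragraph all the box structure isomorphisms are restrictions of the cartesian ones; hence any coherence diagram for $\boxtimes$ commutes, because post-composing its two legs with the injective inclusion into the ambient iterated cartesian product reduces the question to the corresponding commuting diagram for $\times$, and an injection is a monomorphism. Finally, for the restriction to $\set^M_{\tm}$: Proposition~\ref{prop:describe operadic}(ii) gives that $\boxtimes$ preserves tameness, the unit $*$ is tame, and all the structure isomorphisms above are maps of tame $M$-sets; as $\set^M_{\tm}\subseteq\set^M$ is a full subcategory, the coherence diagrams continue to commute there, so $\boxtimes$ is a symmetric monoidal structure on $\set^M_{\tm}$ as well. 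I do not expect any genuine obstacle; the only thing to watch is the bookkeeping of which iterated cartesian product a given iterated box product embeds into, which is routine.
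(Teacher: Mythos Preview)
Your proposal is correct and follows exactly the approach the paper takes: the paper's entire argument is the paragraph immediately preceding the proposition (the statement carries a \qed), which asserts that the cartesian associativity, symmetry and unit isomorphisms restrict to $\boxtimes$ and hence inherit the coherence conditions. Your write-up simply spells out carefully what the paper dismisses as ``clearly'', including the bifunctoriality, the explicit identification of both iterated box products with the pairwise-disjoint-support locus, and the passage to the tame subcategory via Proposition~\ref{prop:describe operadic}(ii).
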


\begin{eg}\label{eg:box of semifree}
  The tame $M$-set $\cI_m$ of injective maps from $\mathbf m=\{1,\dots,m\}$
  to $\omega$ was discussed in Example \ref{ex:I_m}.
  We define a morphism of $M$-sets
  \[ \cI_{m+n}\ \to \ \cI_m\times \cI_n \ , \quad f\ \longmapsto\ (f i^1,f i^2)\ ,\]
  where $i^1\colon \mathbf m\to\mathbf{m+n}$ is the inclusion and
  $i^2\colon \mathbf n\to\mathbf{m+n}$ sends $i$ to $m+i$.
  This morphism is injective and its image consists precisely of those pairs $(\alpha,\beta)$
  such that 
  \[  \supp(\alpha)\cap\supp(\beta)\ = \ \alpha(\mathbf m)\cap\beta(\mathbf n)\ =\ \emptyset\ .\]
  So the map restricts to an isomorphism of $M$-sets 
  \[ \rho \colon \cI_{m+n}\ \xrightarrow{\iso} \ \cI_m\boxtimes  \cI_n \ .\]
\end{eg}

\begin{lemma}\label{lem:unnatural-box-vs-product}
  Let $X$ and $Y$ be tame $M$-sets. There is a bijection of sets
  $X\boxtimes Y \to X \times Y$ that is natural for $M$-equivariant maps in $Y$.
\end{lemma}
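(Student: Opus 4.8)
The idea is to exploit the decomposition of Theorem~\ref{thm:complement}, which describes any tame $M$-set as a disjoint union of pieces $\cI_m\times_{\Sigma_m}s_m(-)$. Applying this to $Y$ reduces the claim to producing, for each $m\geq 0$, a bijection
\[
 X\boxtimes\bigl(\cI_m\times_{\Sigma_m}s_m(Y)\bigr)\ \xra{\ \iso\ }\ X\times\bigl(\cI_m\times_{\Sigma_m}s_m(Y)\bigr)
\]
that is natural in the $\Sigma_m$-set $s_m(Y)$; by cocontinuity of $\boxtimes$ and $\times$ in each variable (both preserve colimits of $M$-sets, being computed on underlying sets by Lemma~\ref{lem:pushouts-seq-colimits-M-sets} and Proposition~\ref{prop:describe operadic}), it even suffices to treat $s_m(Y)=\Sigma_m$, i.e. to give a natural bijection $X\boxtimes\cI_m\to X\times\cI_m$. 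Here ``natural'' only has to be checked for the $M$-equivariant maps coming from varying $Y$, which after the reduction means compatibility with the $M$-maps between the $\cI_m$'s; but since the $\Sigma_m$-action and the colimit over $s_m(Y)$ are bookkeeping that $\boxtimes$ and $\times$ handle identically on underlying sets, the real content is a single natural-in-$X$? — no: natural in the \emph{second} variable — bijection $X\boxtimes\cI_m\to X\times\cI_m$ compatible with all $M$-maps $\cI_m\to\cI_n$ (equivalently, by Example~\ref{ex:I_m}, with reindexing finite supports).

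**Construction of the bijection.** By Example~\ref{ex:I_m}, a point of $\cI_m$ is an injection $\alpha\colon\mathbf m\to\omega$ with support $\alpha(\mathbf m)$. Given $(x,\beta)\in X\times\cI_m$ I want to move $x$ ``out of the way'' of $\beta$. Choose, once and for all, an injection $\tau\in M$ whose image is, say, the even numbers $2\omega$ (or any fixed infinite–coinfinite set); more carefully, to get compatibility with all $\cI_m\to\cI_n$ I should choose a coherent family, which is easiest phrased as: fix an injection $t\colon\omega\to\omega$ with $t(\omega)$ coinfinite, and for $x\in X$ with $\supp(x)=S$ pick an injection $g=g_{x,\beta}\in M$ that equals the identity away from $S$ and satisfies $g(S)\cap\beta(\mathbf m)=\emptyset$ — such a $g$ exists because $S$ is finite and $\omega$ is infinite. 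The problem is that $g$ is not canonical. The standard fix (this is exactly the Eilenberg–swindle-type trick underlying Proposition~\ref{prop:intersect supports}): use Proposition~\ref{prop:finite support}(i), which says $g x$ depends only on $g|_{\supp(x)}$, together with the observation that any two choices of $g$ differ, on $\supp(x)$, by an injection fixing $\beta(\mathbf m)$ pointwise and landing in $\omega\setminus\beta(\mathbf m)$ — but that's not quite enough for equality on the nose. So instead the clean route is: define the map $X\boxtimes\cI_m\to X\times\cI_m$ in the \emph{opposite} direction is the one that is easy, namely the inclusion is the wrong way; rather, I will build $X\times\cI_m\to X\boxtimes\cI_m$ by sending $(x,\beta)$ to $(t_\beta\cdot x,\ \beta)$ where $t_\beta$ is an injection determined canonically by $\beta$: let $t_\beta\in M$ be the unique order-preserving injection $\omega\to\omega\setminus\beta(\mathbf m)$. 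Then $\supp(t_\beta x)=t_\beta(\supp x)\subseteq\omega\setminus\beta(\mathbf m)$ is disjoint from $\supp(\beta)=\beta(\mathbf m)$, so this lands in $X\boxtimes\cI_m$; it is injective because $t_\beta$ acts injectively on the tame $X$ (Proposition~\ref{prop:algebraic consequences}); and it is surjective because $t_\beta\colon\omega\to\omega\setminus\beta(\mathbf m)$ is a bijection, so every $(x',\beta)\in X\boxtimes\cI_m$, having $\supp(x')\subseteq\omega\setminus\beta(\mathbf m)$, is hit by $(t_\beta^{-1}x',\beta)$ where $t_\beta^{-1}$ means a chosen injection $M$ inverting $t_\beta$ on $\supp(x')$ — well-defined on the image by Proposition~\ref{prop:finite support}(i). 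Inverting: $(x',\beta)\mapsto(t_\beta^{-1}x',\beta)$ gives the desired bijection $X\boxtimes\cI_m\to X\times\cI_m$.

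**Naturality and assembly.** For naturality in the $\cI$-variable I must check that for an $M$-map $\cI_m\to\cI_n$ (coming from $\mathbf n\hookrightarrow\mathbf m$ or, more relevantly, from the structure maps of $Y$ assembled via $\psi$), the square commutes; this follows because the canonical order-preserving injection $t_\beta$ is defined purely from the finite set $\beta(\mathbf m)=\supp(\beta)$ and supports are sent into supports by $M$-maps (Proposition~\ref{prop:finite support}(ii) and the remark after it), so the two routes around the square apply injections that agree on $\supp(x)$ — invoke Proposition~\ref{prop:finite support}(i) once more. Then one feeds this through $-\times_{\Sigma_m}s_m(Y)$ (the $\Sigma_m$-equivariance of $\beta\mapsto t_\beta$ is immediate since $t_{\beta\sigma}=t_\beta$, as $\beta\sigma(\mathbf m)=\beta(\mathbf m)$), takes the coproduct over $m$, and transports along the isomorphism of Theorem~\ref{thm:complement} applied to $Y$, together with the fact that $X\boxtimes-$ and $X\times-$ commute with coproducts of $M$-sets.

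**Main obstacle.** The delicate point is well-definedness and naturality of the ``undo'' map $(x',\beta)\mapsto(t_\beta^{-1}x',\beta)$: $t_\beta$ is only a bijection onto its image $\omega\setminus\beta(\mathbf m)$, not an element of $M$ invertible in $M$, so ``$t_\beta^{-1}$'' must be interpreted as any $M$-injection agreeing with $t_\beta^{-1}$ on the relevant finite support, and one must verify via Proposition~\ref{prop:finite support}(i) that the resulting element of $X$ is independent of that choice and that these choices can be made compatibly across all the structure maps of $Y$. This is exactly the kind of ``support bookkeeping'' that Propositions~\ref{prop:intersect supports}--\ref{prop:algebraic consequences} were set up to handle, so I expect it to go through cleanly, but it is where all the care is needed — and it is also the reason the bijection is \emph{not} natural in $X$ (an $M$-map $X\to X'$ may shrink supports, breaking the compatibility of the chosen inverses), consistent with the statement only claiming naturality in $Y$.
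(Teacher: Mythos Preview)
Your construction has the roles of $X$ and $Y$ reversed, and this breaks exactly the naturality you are trying to establish. You send $(x,\beta)\in X\times\cI_m$ to $(t_\beta x,\beta)$, where $t_\beta$ is determined by $\supp(\beta)$. After assembly this becomes the map $X\times Y\to X\boxtimes Y$, $(x,y)\mapsto (t_{\supp(y)}x,y)$, with $t_A$ the order-preserving bijection $\omega\to\omega\setminus A$. Now let $\phi\colon Y\to Y'$ be $M$-equivariant. Naturality in $Y$ requires $t_{\supp(\phi(y))}x=t_{\supp(y)}x$, but $\phi$ may strictly shrink supports, and then the two injections $t_{\supp(\phi(y))}$ and $t_{\supp(y)}$ differ on~$\omega$. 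Concretely, take $X=Y=\cI_1$, $Y'=\ast$, $\phi$ the unique map, $y=\iota_1$ with $\supp(y)=\{1\}$, and $x'\in X$ with $\supp(x')=\{2\}$. Then $t_{\{1\}}^{-1}x'$ has support $\{1\}$ while $t_\emptyset^{-1}x'=x'$ has support $\{2\}$, so the square fails to commute. This is precisely the mechanism you correctly identify at the end as obstructing naturality in $X$; it applies equally to $Y$ because your choice depends on $\supp(y)$. Relatedly, the decomposition of Theorem~\ref{thm:complement} applied to $Y$ is \emph{not} natural in $Y$ (the paper warns that $s_m(-)$ is not a functor), so ``transporting along'' it cannot produce a natural transformation.

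The fix is to swap the variables: decompose $X\boxtimes Y$ as $\coprod_{x\in X}\{x\}\times Y_{[\supp(x)]}$, where $Y_{[A]}=\{y\in Y:\supp(y)\subseteq\omega\setminus A\}$, and for each $x$ choose $f_x\in M$ with image $\omega\setminus\supp(x)$. By Proposition~\ref{prop:algebraic consequences} the action of $f_x$ is a bijection $Y\to Y_{[\supp(x)]}$, and this bijection is natural in $Y$ simply because $M$-equivariant maps commute with the $M$-action. This is the paper's argument; note that it uses only Proposition~\ref{prop:algebraic consequences} and avoids Theorem~\ref{thm:complement} entirely.
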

\begin{proof}
  We write $Y_{[A]}$ for the subset of elements of $Y$ that are
  supported on $\omega-A$.  By definition, the underlying 
  set of $X\boxtimes Y$ is the disjoint union, over $x\in X$, of the sets
  \[ \{x\}\times Y_{[\supp(x)]} \ ,\]
  and this decomposition is natural for $M$-equivariant morphisms in $Y$.
  We let $f\in M$ be any injection with $f(\omega)=\omega-\supp(x)$.
  Then the action of $f$ is a bijection from $Y$ onto the subset $Y_{[\supp(x)]}$,
  by Proposition \ref{prop:algebraic consequences}. 
  So the underlying set of $X\boxtimes  Y$ bijects with the underlying set of $X\times Y$.
\end{proof}

We warn the reader that the bijection between $X\boxtimes Y$ and $X\times Y$
constructed in the previous lemma is in general neither $M$-equivariant nor natural in $X$.

\begin{cor}\label{cor:boxtimes-preserves-colimits}
If $X$ is a tame $M$-set, then $X\boxtimes - \colon \set^M_{\tm} \to  \set^M_{\tm}$ preserves colimits. \qed
\end{cor}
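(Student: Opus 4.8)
The plan is to deduce the corollary directly from Lemma~\ref{lem:unnatural-box-vs-product} by a standard ``shape of colimits'' argument. Let $D\colon \mathcal{D} \to \set^M_{\tm}$ be a small diagram of tame $M$-sets. We must produce a canonical isomorphism $\colim_{d}(X\boxtimes D(d)) \xra{\iso} X\boxtimes (\colim_d D(d))$ and verify that the obvious comparison map is this isomorphism. First I would recall from Lemma~\ref{lem:pushouts-seq-colimits-M-sets}(ii) that colimits in $\set^M_{\tm}$ are computed on underlying sets, so it suffices to check that the comparison map is a bijection of underlying sets. The canonical map $\colim_d(X\boxtimes D(d)) \to X\boxtimes(\colim_d D(d))$ is always defined (it is induced by the structure maps $X\boxtimes D(d) \to X\boxtimes \colim D$, which exist since $X\boxtimes-$ is a functor), so only bijectivity is at issue.

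Next I would use the naturality-in-$Y$ part of Lemma~\ref{lem:unnatural-box-vs-product}: for each fixed $d$ there is a bijection $X\boxtimes D(d) \xra{\iso} X\times D(d)$, and these bijections are natural for the $M$-equivariant maps $D(d)\to D(d')$ in the diagram, hence assemble into an isomorphism of diagrams of \emph{sets}. Passing to colimits in $\set$ and using that $X\times -$ preserves colimits of sets, we get a chain of bijections
\[
\colim_d(X\boxtimes D(d))\ \iso\ \colim_d(X\times D(d))\ \iso\ X\times\bigl(\colim_d D(d)\bigr)\ .
\]
On the other hand, applying Lemma~\ref{lem:unnatural-box-vs-product} once more to $Y=\colim_d D(d)$ (which is tame, again by Lemma~\ref{lem:pushouts-seq-colimits-M-sets}(ii)) gives a bijection $X\boxtimes(\colim_d D(d)) \iso X\times(\colim_d D(d))$. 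Composing, we obtain a bijection between $\colim_d(X\boxtimes D(d))$ and $X\boxtimes(\colim_d D(d))$.

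The one point requiring care — and the main obstacle — is checking that this composite bijection agrees with the canonical comparison map, since the bijections of Lemma~\ref{lem:unnatural-box-vs-product} are explicit and somewhat ad hoc (they depend on choices of injections $f$ with $f(\omega)=\omega-\supp(x)$). The cleanest way around this is to avoid comparing maps altogether: instead of invoking the abstract comparison map, one observes that the subset decomposition used in the proof of Lemma~\ref{lem:unnatural-box-vs-product}, namely $X\boxtimes Y = \coprod_{x\in X}\{x\}\times Y_{[\supp(x)]}$, is natural in $Y$ on the nose, so $\colim_d(X\boxtimes D(d)) = \coprod_{x\in X}\{x\}\times\colim_d D(d)_{[\supp(x)]}$; then one checks that $(-)_{[\supp(x)]}$, being the image of the action of a suitable $f\in M$ and hence (by Proposition~\ref{prop:algebraic consequences}) an isomorphism onto its image, commutes with colimits, giving $\colim_d D(d)_{[\supp(x)]} = (\colim_d D(d))_{[\supp(x)]}$, and this is exactly the underlying set of $X\boxtimes(\colim_d D(d))$ under the same decomposition. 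All of these identifications are manifestly compatible with the structure maps into the colimit, so the canonical comparison map is a bijection, proving the corollary.
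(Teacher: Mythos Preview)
Your proposal is correct and follows exactly the route the paper intends: the corollary is marked \qed\ because it is meant to be immediate from Lemma~\ref{lem:unnatural-box-vs-product} together with Lemma~\ref{lem:pushouts-seq-colimits-M-sets}(ii). Your worry about whether the composite bijection agrees with the canonical comparison map is a bit overstated: once Lemma~\ref{lem:unnatural-box-vs-product} asserts that the bijection $X\boxtimes Y\cong X\times Y$ is \emph{natural} for $M$-equivariant maps in $Y$, the set-valued functor $Y\mapsto X\boxtimes Y$ is naturally isomorphic to $Y\mapsto X\times Y$, and a natural isomorphism between functors automatically intertwines their canonical comparison maps into any colimit, so one preserves colimits iff the other does. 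Your alternative argument via the on-the-nose decomposition $X\boxtimes Y=\coprod_{x\in X}\{x\}\times Y_{[\supp(x)]}$ is also fine and amounts to the same thing.
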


\section{Homotopy theory of \texorpdfstring{$M$}{M}-spaces}\label{sec:M-spaces}
In this section we introduce two notions of equivalence for $M$-spaces,
the {\em weak equivalences} and the {\em $M$-equivalences},
and we show that the box product of $M$-spaces is fully homotopical for
both kinds of equivalences.

\begin{defn}\begin{enumerate}[(i)]
  \item An {\em $M$-space} is a simplicial set equipped with a left
    action of the injection monoid $M$. We write $\sset^M$ for the
    category of $M$-spaces.
  \item An $M$-space is {\em tame} if for every simplicial dimension
    $k$, the $M$-set of $k$-simplices is tame. We write
    $\sset^M_{\tm}$ for the full subcategory of $\sset^M$
    whose objects are the tame $M$-spaces.
  \end{enumerate}
\end{defn}

\begin{defn}
  A morphism $f\colon X\to Y$ of $M$-spaces is
  \begin{enumerate}[(i)]
  \item a {\em weak equivalence} if it is a weak equivalence of
    underlying simplicial sets after forgetting the $M$-action, and
    \item an {\em $M$-equivalence} if the induced
    morphism on homotopy colimits (bar constructions)
    $f_{h M}\colon X_{h M}\to Y_{h M}$ is a weak equivalence of simplicial sets.
  \end{enumerate}
\end{defn}
The weak equivalences of $M$-spaces are analogous
to the $\pi_*$-isomorphisms of symmetric spectra, or the $\cN$-equivalences of $\cI$-spaces.
The $M$-equivalences are the more important class of equivalences since
the homotopy theory represented by tame $M$-spaces relative to
$M$-equivalences is the homotopy theory of spaces (see
Corollary~\ref{cor:model-on-tame-M-spaces} below). Dropping the
tameness assumption, a result by
Dugger~\cite[Theorem\,5.2]{Dugger_replacing} implies that the
$M$-equivalences are the weak equivalence in a model structure on
$\sset^M$ that is Quillen equivalent to spaces. This uses that the
classifying space of $M$ is weakly
contractible~\cite[Lemma\,5.2]{schwede:homotopy}.

Every weak equivalence of $M$-spaces induces a weak equivalence on
homotopy $M$-orbits; hence weak equivalences are in particular
$M$-equivalences.  The converse is not true: for every injective map
$\alpha\colon \bld{m} \to \bld{n}$, the induced restriction morphism
\[ \alpha^* \colon \cI_n \ \to \ \cI_m\ , \quad f \ \longmapsto \ f\circ \alpha \]
is an $M$-equivalence because the $M$-homotopy orbits of source and
target are weakly contractible, by the following Example \ref{eg:hM of I_m}.
However, for $m\ne n$, the morphism is not a weak equivalence.

\begin{eg}\label{eg:hM of I_m}
  The $M$-set $\cI_m$ of injective maps $\mathbf m\to\omega$ was
  discussed in Example \ref{ex:I_m}.  We claim that the simplicial set
  $(\cI_m)_{h M}$ is weakly contractible.  To this end we observe that
  $(\cI_m)_{h M}$ is the nerve of the translation category $T_m$ whose
  object set is $\cI_m$, and where morphisms from $f$ to $g$ are all
  $h\in M$ such that $h f=g$. Given any two $f,g\in \cI_m$, we can
  choose a bijection $h\in M$ such that $h f=g$, and then $h$ is an
  isomorphism from $f$ to $g$ in $T_m$.  Since all objects of the
  category $T_m$ are isomorphic, its nerve is weakly equivalent to the
  classifying space of the endomorphism monoid of any of its
  objects. All these endomorphism monoids are isomorphic to the
  injection monoid $M$, so we conclude that the simplicial set
  $(\cI_m)_{h M}\iso N(T_m)$ is weakly equivalent to the classifying
  space of the injection monoid $M$.  The classifying space of $M$ is
  weakly contractible by \cite[Lemma\,5.2]{schwede:homotopy}, hence so
  is the simplicial set $(\cI_m)_{h M}$.
\end{eg}

If $X$ and $Y$ are $M$-spaces, their box product
$X \boxtimes  Y$ is given by the levelwise box product of
$M$-sets, introduced in Definition \ref{def:box-product}.
The next results show that the box product with any tame $M$-space
preserves weak equivalences and $M$-equivalences.

\begin{thm}\label{thm:box preserves weak M}
  For every tame $M$-space $X$, the functor $X\boxtimes  -$ preserves weak equivalences
  between tame $M$-spaces.
\end{thm}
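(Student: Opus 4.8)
The key structural input is Theorem~\ref{thm:complement}, which decomposes any tame $M$-space levelwise as $\coprod_{m\geq 0}\cI_m\times_{\Sigma_m}s_m(-)$. But $s_m$ is not functorial, so one cannot hope to reduce the problem to the pieces $\cI_m\times_{\Sigma_m}K$ directly along an arbitrary weak equivalence $f\colon Y\to Y'$. Instead I would exploit Lemma~\ref{lem:unnatural-box-vs-product}: for a fixed tame $M$-space $X$ there is a bijection $X\boxtimes Y\to X\times Y$ that is natural in $Y$ (for $M$-equivariant maps). Hence, as a functor of $Y$, $X\boxtimes(-)$ is levelwise isomorphic to $X\times(-)$, where the product is taken in simplicial sets with the diagonal $M$-action. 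So it suffices to show that for every tame $M$-space $X$ the functor $X\times(-)$ preserves weak equivalences of underlying simplicial sets between tame $M$-spaces.

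**Reduction to a single simplicial level / the cartesian product.**
Forgetting the $M$-action, $X\times(-)$ is just the cartesian product of simplicial sets, which certainly preserves weak equivalences without any cofibrancy hypothesis (simplicial sets are all cofibrant). So once the isomorphism of functors $X\boxtimes(-)\iso X\times(-)$ is in place, the theorem is immediate: if $f\colon Y\to Y'$ is a weak equivalence of tame $M$-spaces, then $\id_X\times f$ is a weak equivalence of simplicial sets, hence so is the map $X\boxtimes Y\to X\boxtimes Y'$ under the natural identification. The only subtlety is to make sure that the identifications $X\boxtimes Y\to X\times Y$ and $X\boxtimes Y'\to X\times Y'$ are compatible with $f$, which is exactly the naturality in the second variable asserted in Lemma~\ref{lem:unnatural-box-vs-product}; and that the bijections constructed level-by-level in that lemma assemble into a map of simplicial sets, which they do because the chosen injection $g\in M$ with $g(\omega)=\omega-\supp(x)$ used to biject $Y_{[\supp(x)]}$ with $Y$ can be taken to depend only on $\supp(x)$, which is constant in the simplex $x$ (a tame simplex and its faces/degeneracies have nested supports, but one can choose the bijection uniformly over the simplicial direction by fixing, for each finite $A\subseteq\omega$, one injection $g_A$ with image $\omega-A$).

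**The main obstacle.**
The genuine work is entirely in verifying that the unnatural bijection of Lemma~\ref{lem:unnatural-box-vs-product}, which was stated only at the level of $M$-sets, can be upgraded to an isomorphism of \emph{simplicial sets} that remains natural in the second variable. Concretely: given a $k$-simplex $x\in X_k$, its support $\supp(x)\subseteq\omega$ is finite and is preserved (up to inclusion) under the simplicial operators acting on $X$, but for the bijection to be simplicial one wants a single coherent choice. The clean way is to fix once and for all, for every finite subset $A\subseteq\omega$, an injection $\iota_A\in M$ with image $\omega\smallsetminus A$, define on $k$-simplices $(x,y)\mapsto (x,\, y')$ where $y\in Y_k$ is the unique simplex with $(\iota_{\supp(x)})_*y' = y$ whenever $\supp(x)$ is \emph{exactly} the finite set indexing the summand — and then check compatibility with faces and degeneracies, which forces a mild bookkeeping argument using Proposition~\ref{prop:finite support}(ii) and Proposition~\ref{prop:algebraic consequences}. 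Once this levelwise-uniform bijection $X\boxtimes Y\iso X\times Y$ is established as a natural isomorphism of functors $\sset^M_{\tm}\to\sset$ in the variable $Y$, the theorem follows formally from the homotopy invariance of $\times$ for simplicial sets. I expect this upgrade of Lemma~\ref{lem:unnatural-box-vs-product} to be the crux; everything after it is a one-line deduction.
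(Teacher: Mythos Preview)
Your main idea --- reduce to the cartesian product via Lemma~\ref{lem:unnatural-box-vs-product} --- is exactly right, and it is what the paper does. But the ``mild bookkeeping argument'' you propose for upgrading the bijection to a map of simplicial sets when $X$ is a genuine $M$-\emph{space} does not go through, and this is a genuine gap. The problem is that the support of a simplex $x\in X_k$ can strictly decrease under a face map: one has only $\supp(d_i x)\subseteq\supp(x)$, and the inclusion is typically proper. So your chosen injections $g_{\supp(x)}$ and $g_{\supp(d_i x)}$ are different elements of $M$ (they even have different images in $\omega$), and the resulting square
\[
\xymatrix@C=15mm{
Y_{k,[\supp(x)]}\ar[r]^-{g_{\supp(x)}^{-1}}\ar[d]_{d_i} & Y_k\ar[d]^{d_i}\\
Y_{k-1,[\supp(d_i x)]}\ar[r]_-{g_{\supp(d_i x)}^{-1}} & Y_{k-1}
}
\]
does not commute. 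Concretely, if $\supp(x)=\{1\}$ and $\supp(d_i x)=\emptyset$, with $g_{\{1\}}$ the shift $n\mapsto n+1$ and $g_\emptyset=\id$, then the two paths send $y$ to $d_i(\text{shift}^{-1}y)$ and to $d_i y$, which disagree whenever $d_i y$ is not $M$-fixed. No choice of the family $(g_A)_A$ can repair this, since $g_A$ and $g_B$ have distinct images whenever $A\neq B$.

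The paper sidesteps this issue entirely. It first proves the theorem in the special case where $X$ is a tame $M$-\emph{set}, viewed as a constant simplicial object. In that case $(X\boxtimes Y)_k=X\boxtimes Y_k$, and the bijection of Lemma~\ref{lem:unnatural-box-vs-product} is natural for $M$-equivariant maps in the $Y$-variable; since the simplicial operators of $Y$ are $M$-equivariant, the bijections automatically assemble into an isomorphism of simplicial sets $X\boxtimes Y\cong X\times Y$ natural in $Y$. This gives the claim for $M$-sets. For a general tame $M$-space $X$, one regards $X\boxtimes Y$ as the diagonal of the bisimplicial set $([k],[l])\mapsto X_k\boxtimes Y_l$; by the set case, for each fixed $k$ the map $X_k\boxtimes f$ is a weak equivalence, and then the fact that realization of bisimplicial sets preserves levelwise weak equivalences finishes the proof. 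This two-step reduction is precisely what replaces your attempted direct simplicial upgrade.
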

\begin{proof}
  We start with the special case where $X$ is a tame $M$-{\em set} (as
  opposed to an $M$-space). For an $M$-space $Y$,
  Lemma~\ref{lem:unnatural-box-vs-product} implies that $X\boxtimes Y$
  is isomorphic to the underlying simplicial set of $X\times Y$ with the isomorphism being natural in $Y$. 
  So $X\boxtimes  -$ preserves weak equivalences
  between tame $M$-spaces because $X \times -$ does. 

  Now we treat the general case. The box product is formed dimensionwise in the
  simplicial direction, and realization (i.e., diagonal) of bisimplicial sets 
  preserves weak equivalences. So the general case follows from the special case of $M$-sets.
\end{proof}

Since $X\boxtimes  Y$ was defined as an $M$-invariant subspace of the product $X\times Y$,
the two projections restrict to morphisms of $M$-spaces
\[ p^1 \colon X\boxtimes  Y\ \to \ X \text{\qquad and\qquad} p^2\colon X\boxtimes  Y\ \to \ Y \ .\]

\begin{thm}\label{thm:invariance M}
  For all tame $M$-spaces $X$ and $Y$, the morphism
  \[ (p^1_{h M}, p^2_{h M}) \colon (X\boxtimes  Y)_{h M} \ \to \ X_{h M}\times Y_{h M}\]
  is a weak equivalence of simplicial sets.
\end{thm}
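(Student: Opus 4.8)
The plan is to reduce the claim to a statement about homotopy orbits of discrete $M$-sets by first observing that the box product, homotopy $M$-orbits, and the product are all formed dimensionwise in the simplicial direction, and that realization (diagonal) of bisimplicial sets commutes with homotopy colimits over $M$ and preserves weak equivalences. Thus it suffices to prove the analogous statement for tame $M$-\emph{sets} $X$ and $Y$: that the canonical map $(X \boxtimes Y)_{hM} \to X_{hM} \times Y_{hM}$ is a weak equivalence of simplicial sets. I would phrase the homotopy $M$-orbits as nerves of translation categories, so that the target becomes $N(T_X) \times N(T_Y) \iso N(T_X \times T_Y)$, where $T_X$ is the translation category with objects $X$ and a morphism $x \to x'$ for every $h \in M$ with $hx = x'$.

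Next I would use Theorem~\ref{thm:complement} to decompose $X$ and $Y$ as disjoint unions $\coprod_m \cI_m \times_{\Sigma_m} s_m(X)$ and $\coprod_n \cI_n \times_{\Sigma_n} s_n(Y)$. Since $(-)_{hM}$ preserves coproducts and $\boxtimes$ preserves colimits in each variable (Corollary~\ref{cor:boxtimes-preserves-colimits}), both source and target split as coproducts indexed by pairs $(m,n)$, with the $(m,n)$-summand on the source being $\bigl((\cI_m \times_{\Sigma_m} A) \boxtimes (\cI_n \times_{\Sigma_n} B)\bigr)_{hM}$ for $A = s_m(X)$, $B = s_n(Y)$. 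So it is enough to treat a single such summand. Using Example~\ref{eg:box of semifree}, which gives $\cI_m \boxtimes \cI_n \iso \cI_{m+n}$, and tracking the residual $\Sigma_m \times \Sigma_n$-action, I expect the source summand to be identified with $\bigl(\cI_{m+n} \times_{\Sigma_m \times \Sigma_n} (A \times B)\bigr)_{hM}$, while the target summand is $(\cI_m \times_{\Sigma_m} A)_{hM} \times (\cI_n \times_{\Sigma_n} B)_{hM}$.

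At this point the problem becomes a homotopy-orbit computation that I would handle by analyzing the relevant translation categories. On the target side, the $M$-homotopy orbits of $\cI_m \times_{\Sigma_m} A$ should be computed by first noting (as in Example~\ref{eg:hM of I_m}) that each $\cI_m$-component contributes a copy of $BM$, which is contractible by \cite[Lemma\,5.2]{schwede:homotopy}; more precisely $(\cI_m \times_{\Sigma_m} A)_{hM}$ is a model for the homotopy orbit $A_{h\Sigma_m}$ tensored with the contractible $(\cI_m)_{hM}$, so it is weakly equivalent to $E\Sigma_m \times_{\Sigma_m} A$. The target summand is then $(E\Sigma_m \times_{\Sigma_m} A) \times (E\Sigma_n \times_{\Sigma_n} B)$. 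On the source side, $\bigl(\cI_{m+n} \times_{\Sigma_m \times \Sigma_n} (A \times B)\bigr)_{hM}$ is, by the same reasoning applied to the subgroup $\Sigma_m \times \Sigma_n \leq \Sigma_{m+n}$, weakly equivalent to $E(\Sigma_m \times \Sigma_n) \times_{\Sigma_m \times \Sigma_n} (A \times B) \iso (E\Sigma_m \times_{\Sigma_m} A) \times (E\Sigma_n \times_{\Sigma_n} B)$, and I would check that under these identifications the comparison map $(p^1_{hM}, p^2_{hM})$ is exactly the resulting homeomorphism. The main obstacle is making the chain of identifications in the last step genuinely natural and $\Sigma$-equivariant: one must show that the zig-zag of weak equivalences computing $(\cI_k \times_{\Sigma_k} (-))_{hM}$ as $E\Sigma_k \times_{\Sigma_k} (-)$ is natural in the $\Sigma_k$-set, and compatible with restricting along $\Sigma_m \times \Sigma_n \hookrightarrow \Sigma_{m+n}$, so that the projection maps on homotopy orbits are tracked correctly. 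I would carry this out by exhibiting an explicit intermediate functor — for instance, replacing $\cI_k$ by a functorially contractible $\Sigma_k$-free resolution and invoking that homotopy colimits over $M$ send levelwise weak equivalences of $M$-diagrams to weak equivalences — rather than chasing nerves of translation categories by hand.
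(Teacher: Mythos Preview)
Your outline is correct and follows the same three-step reduction as the paper: pass to $M$-sets via the diagonal of the bisimplicial direction, decompose using Theorem~\ref{thm:complement}, and identify the semifree pieces via the isomorphism $\cI_m\boxtimes\cI_n\iso\cI_{m+n}$ of Example~\ref{eg:box of semifree}.

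The one place where the paper is more efficient is your final step. Rather than keeping general $\Sigma_m$-sets $A=s_m(X)$ and $\Sigma_n$-sets $B=s_n(Y)$ and trying to identify both sides separately with $E\Sigma\times_\Sigma(-)$ constructions (which forces you to track the naturality and restriction compatibilities you flag as the ``main obstacle''), the paper performs one further decomposition: each $A$ is written as a disjoint union of orbits $\Sigma_m/H$. This reduces to the case $X=\cI_m/H$ and $Y=\cI_n/K$, where the argument becomes entirely formal: the $(H\times K)$-equivariant map
\[
(\cI_m\boxtimes\cI_n)_{hM}\ \to\ (\cI_m)_{hM}\times(\cI_n)_{hM}
\]
is a weak equivalence because both sides are contractible, the $(H\times K)$-actions on both sides are free, and hence the map on orbits is again a weak equivalence. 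There is then nothing to check about compatibility of zig-zags, because a map between weakly contractible spaces is automatically a weak equivalence regardless of how it arose. Your approach can be completed as written, but the extra orbit decomposition dissolves precisely the bookkeeping you were worried about.
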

\begin{proof}
  We start with the special case where $X=\cI_m/H$ and $Y=\cI_n/K$ for some $m,n\geq 0$,
  for some subgroup $H$ of $\Sigma_m$, and some subgroup $K$ of $\Sigma_n$.
  By Example~\ref{eg:box of semifree}, the box product $\cI_m\boxtimes  \cI_n$ is isomorphic to $\cI_{m+n}$.
  Hence by Example~\ref{eg:hM of I_m}, source and target of the $(H\times K)$-equivariant morphism
  \[ (p^1_{h M}, p^2_{h M}) \colon (\cI_m\boxtimes  \cI_n)_{h M} \ \to \ (\cI_m)_{h M}\times (\cI_n)_{h M}\]
  are weakly contractible.
  The actions of $H$ on $(\cI_m)_{h M}$ and of $K$ on $(\cI_n)_{h M}$ are free.
  Similarly, the  isomorphism
  $(\cI_m\boxtimes  \cI_n)_{h M}\iso (\cI_{m+n})_{h M}$ provided by Example \ref{eg:box of semifree}
  shows that the action of $H\times K$ on $(\cI_m\boxtimes  \cI_n)_{h M}$ is free.
  Any equivariant map between free $(H\times K)$-spaces that is a weak equivalence
  of underlying simplicial sets induces a weak equivalence on orbit spaces.
  So the morphism
  \[ ( p^1_{h M}, p^2_{h M})/(H\times K) \colon (\cI_m\boxtimes
  \cI_n)_{h M}/ (H\times K) \ \to \ (\cI_m)_{h M}/H\times (\cI_n)_{h
    M}/K\]
  is a weak equivalence of simplicial sets.  The claim now follows
  because the box product preserves colimits in each variable by
  Corollary~\ref{cor:boxtimes-preserves-colimits}, and because the
  canonical morphism
  \[  ( (\cI_m)_{h M})/H \ \to \ (\cI_m/H)_{h M} \]
  is an isomorphism of simplicial sets.
  
  Now we assume that $X$ and $Y$ are $M$-sets. Then $X$ is isomorphic
  to a disjoint union of $M$-sets of the form $\cI_m\times_{\Sigma_m}A_m$, for varying
  $m\geq 0$ and $\Sigma_m$-sets $A_m$, by Theorem \ref{thm:complement}.
  The $\Sigma_m$-set $A$, in turn, is isomorphic to a disjoint union of $\Sigma_m$-sets
  of the form $\Sigma_m/H$, for varying subgroups $H$ of $\Sigma_m$.
  The box product of $M$-spaces and the cartesian product of simplicial sets
  distribute over disjoint unions, and homotopy orbits preserve disjoint unions.
  So source and target of the morphism under consideration decompose as disjoint unions
  of $M$-sets considered in the previous paragraph.
  Weak equivalences of simplicial sets are stable under disjoint unions, so
  this reduces the case of $M$-sets to the case of the previous paragraph.

  Now we treat the general case. All relevant construction commute with realization
  of simplicial objects in $M$-spaces (i.e., diagonal of simplicial sets).
  So the general case follows from the special case of $M$-sets because
  realization of bisimplicial sets is homotopical.  
\end{proof}

This result implies Theorem~\ref{thm:intro:boxtimes-M-fully-homotopical} from the introduction:

\begin{proof}[Proof of Theorem~\ref{thm:intro:boxtimes-M-fully-homotopical}]
  Since the product of simplicial sets preserves weak equivalences in both variables, it
  follows from Theorem~\ref{thm:invariance M} that the $\boxtimes $-product is fully homotopical
  with respect to the $M$-equivalences. 
\end{proof}

\begin{rk}
  The cartesian product of $M$-spaces, with diagonal $M$-action,
  is {\em not} homotopical for $M$-equivalences.
  For example, the unique morphism $\cI_1\to\ast$ to the terminal $M$-space
  is an $M$-equivalence by Example \ref{eg:hM of I_m}.
  However, the product $\cI_1\times \cI_1$ is the disjoint union
  of $\cI_1\boxtimes  \cI_1$ (which is isomorphic to $\cI_2$)
  and the diagonal (which is isomorphic to $\cI_1$).
  So $(\cI_1\times \cI_1)_{h M}$ consists of two weakly contractible components,
  and the projection to the first factor
  \[ \cI_1\times \cI_1 \ \to \ \cI_1 \]
  is {\em not} an $M$-equivalence.

  We call a tame $M$-space $X$ \emph{semistable}
  if the canonical map $X \to X_{hM}$ is a weak equivalence of simplicial sets.
  This notion is analogous to semistability of symmetric spectra
  (see~\cite[\S 5.6]{HSS} and~\cite[\S 4]{schwede:homotopy}) and
  $\cI$-spaces (see~\cite[\S 2.5]{Sagave-S_group-compl}).  For semistable
  tame $M$-spaces $X$ and $Y$, one can show that the inclusion
  $X\boxtimes Y\to X\times Y$ is both a weak equivalence and an
  $M$-equivalence. Hence the product of semistable tame $M$-spaces
  is fully homotopical for $M$-equivalences.
\end{rk}

\begin{eg}[Homotopy infinite symmetric product]
  We review an interesting class of tame $M$-spaces due to
  Schlichtkrull \cite{schlichtkrull:infinite_symmetric}.
  Schlichtkrull's paper is written for topological spaces, but we work with simplicial sets instead.
  We let $X$ be a based simplicial set, with basepoint denoted by $\ast$.
  We define $X^\infty$ as the simplicial subset of
  $X^\omega$ consisting, in each simplicial dimension, of those functions $\alpha\colon \omega\to X$
  such that $\alpha(i)=\ast$ for almost all $i\in\omega$.
  The injection monoid $M$ acts on $X^\infty$ by
  \[ (f\alpha)(j) \ = \
    \begin{cases}
      \alpha(i) & \text{\ if $f(i)=j$, and}\\
     \ \ast & \text{\ if $j\not\in f(\omega)$.}
    \end{cases}
\]
So informally speaking, one can think of $f \alpha$ as the
composite $\alpha\circ f^{-1}$, with the caveat that $f$ need not be
invertible.  An element $\alpha\in X^\infty$ is supported on the
finite set $\omega-\alpha^{-1}(\ast)$, so the $M$-action on $X^\infty$
is tame.  Schlichtkrull's main result about this construction is that
for connected $X$, the $M$-homotopy orbit space $X^\infty_{h M}$ is
weakly equivalent to $Q(X)$, the underlying infinite loop space of the
suspension spectrum of $X$, see
\cite[Theorem\,1.2]{schlichtkrull:infinite_symmetric}.
In other words, $X^\infty$ is a model for $Q(X)$ in the world of tame $M$-spaces.

The $M$-space $X^\infty$ has additional structure: it is a commutative monoid
for the box product of $M$-spaces. Indeed, a multiplication
\[ \mu_X \colon X^\infty\boxtimes  X^\infty \ \to \ X^\infty \]
is given by
\[ \mu_X(\alpha,\beta)(i) \ = \   \
  \begin{cases}
    \alpha(i) & \text{\ if $\alpha(i)\ne\ast$, and}\\
    \beta(i) & \text{\ if $\beta(i)\ne\ast$, and}\\
    \ \ast & \text{\ if $\alpha(i)=\beta(i)=\ast$.}
  \end{cases}
\]
This assignment makes sense because whenever $\alpha$ and $\beta$ have disjoint support,
then for all $i\in\omega$, at least one of the elements $\alpha(i)$ and $\beta(i)$
must be the basepoint of $X$. The multiplication map $\mu_X$ is clearly associative and commutative,
and the constant map to the basepoint of $X$ is a neutral element.
With this additional structure, $X^\infty$ has the following universal property.
We let $M$ act on $\omega_+\sm X$ by $f(i,x)=(f(i),x)$.
Then $X^\infty$ is the free commutative $\boxtimes$-monoid,
in the category of based tame $M$-spaces, generated by the tame $M$-space $\omega_+\sm X$.
This universal property, or direct inspection, shows that the functor
\[ \sset_*\ \to \ \sset^M_{\tm} \ , \quad X \ \longmapsto \ X^\infty\]
from based simplicial sets takes coproducts to box products (which are coproducts in the category
of commutative $\boxtimes $-monoids). More precisely, for all based simplicial sets $X$ and $Y$,
the composite map
\[ X^\infty\boxtimes  Y^\infty \ \xra{i_X^\infty\boxtimes 
  i_Y^\infty}\ (X\vee Y)^\infty\boxtimes  (X\vee Y)^\infty \
\xra{\mu_{X\vee Y}} (X\vee Y)^\infty\]
is an isomorphism of tame $M$-spaces, where $i_X\colon X\to X\vee Y$
and $i_Y\colon Y\to X\vee Y$ are the inclusions. The analogous
statement for $\cI$-spaces is~\cite[Lemma 3.1]{schlichtkrull:infinite_symmetric}.
\end{eg}

\section{The box product of \texorpdfstring{$\mathcal I$}{I}-spaces}\label{sec:I-spaces}
In this section we prove that the box product of $\cI$-spaces is fully
homotopical for the classes of $\cI$-equivalences and
$\mathcal N$-equivalences.
Our strategy is to reduce each of these claims to the corresponding
one for the box product of tame $M$-spaces.  The key tool is a certain
functor from $\cI$-spaces to tame $M$-spaces that matches the box
products and the relevant notions of equivalences.

We let $\cI$ denote the category whose objects are the sets
$\mathbf m=\{1,\dots,m\}$ for $m\geq 0$, with $\mathbf 0$ being the
empty set. Morphisms in $\cI$ are all injective maps.  The category
$\cI$ is thus a skeleton of the category of finite sets and
injections. (Other authors denote this category by $\mathbb I$ or $F I$,
and some use the letter $\cI$ for the orthogonal counterpart.)

\begin{defn}
  An {\em $\cI$-space} is a functor from the category $\cI$ to the
  category of simplicial sets.  A morphism of $\cI$-spaces is a
  natural transformation of functors, and we write $\sset^{\cI}$ for
  the category of $\cI$-spaces. 
\end{defn}
We denote by $\cN$ the non-full subcategory of $\cI$ containing all
objects, but only the inclusions as morphisms. In other words, $\cN$
is the category associated with the partially ordered set
$(\mathbb N,\leq )$.
\begin{con}[From $\cI$-spaces to tame $M$-spaces]\label{con:M action}
  We let $X$ be an $\cI$-space. We write
  \[ X(\omega)\ = \ \colim_\cN X \]
  for the colimit of $X$ formed over the non-full subcategory $\cN$.
  We observe that the simplicial set $X(\omega)$ supports a natural
  action by the injection monoid $M$, defined as follows.  We let
  $[x]\in X(\omega)$ be represented by $x\in X(\mathbf m)$, and we let
  $f\in M$ be an injection. We set $n=\max(f(\mathbf m))$ and write
  $\tilde f\colon \mathbf m\to\mathbf n$ for the restriction of
  $f$. Then we get a well-defined $M$-action by setting 
  \[ f\, [x]\ = \ [X(\tilde f)(x)]\ . \]
  Moreover, $x$ is supported on $\bld{m}$, and so this $M$-action is tame.
\end{con}

In Construction~\ref{con:tame-M-to-I} below we will exhibit this construction
as a left adjoint functor $(-)(\omega) \colon \set^{\cI} \to \set^M_{\tm}$.
To analyze its homotopical properties, we use the following notions. 

\begin{defn}
  A morphism $f\colon X\to Y$ of $\cI$-spaces is
  \begin{itemize}
  \item an {\em $\mathcal N$-isomorphism} if the induced map
    $f(\omega)\colon X(\omega)\to Y(\omega)$ on colimits over $\cN$ is
    an isomorphism,
  \item
    an {\em $\mathcal N$-equivalence} if  $f(\omega)$
    is a weak equivalence of underlying simplicial sets, and
  \item 
    an {\em $\cI$-equivalence} if the induced
    map $f_{h\cI}\colon X_{h\cI}\to Y_{h\cI}$ on Bousfield-Kan homotopy colimits \cite[Chapter\,XII]{BK}
    is a weak equivalence of simplicial sets.
  \end{itemize}
\end{defn}

\begin{rk}
  The above definition of $\mathcal N$-equivalences is not the same as
  the one given in \cite[\S\,2.5]{Sagave-S_group-compl}, where the
  homotopy colimit over $\cN$ is used instead of the categorical
  colimit.  However, sequential colimits of simplicial sets are fully
  homotopical for weak equivalences. (One way to see this is that
  sequential colimits of acyclic Kan fibrations are again acyclic Kan
  fibrations by a lifting argument, and that therefore cofibrant
  replacements in the projective model structure on $\sset^{\cN}$ are
  mapped to weak equivalences by $\colim_{\cN}$; compare also
  Lemma~\ref{lem:N-isos-I-equivalences}.) This implies that for every
  functor $X\colon \cN\to \sset$, the canonical morphism
  \[ \hocolim_\cN X\ \to \ \colim_\cN X\]
  is a weak equivalence, compare \cite[Chapter\,XII, 3.5]{BK}.  So the two
  definitions of $\mathcal N$-equivalences of $\cI$-spaces are
  equivalent.
\end{rk}

We recall that the homotopy colimit of an $\cI$-space $X$ is related
to the homotopy colimit of the $M$-space $X(\omega)$ by a chain of
three natural weak equivalences; this observation is due to Jeff
Smith, and recorded in \cite[Proposition\,2.2.9]{shipley-THH}.
We let $\cI_\omega$ denote the category whose objects are the sets
$\mathbf m=\{1,\dots,m\}$ for $m\geq 0$, and the set $\omega=\{1,2,3,\dots\}$.
Morphisms in $\cI_\omega$ are all injective maps.  The restriction functor
\[ \sset^{\cI_\omega}\ \to \ \sset^{\cI} \]
has a left adjoint
\[ \sset^{\cI}\ \to \ \sset^{\cI_\omega}\ , \quad X \ \longmapsto \
\tilde X\ ,\]
given by left Kan extension.  Since $\cI$ is a full subcategory of
$\cI_\omega$ and the canonical functor $\cN \to \cI/\omega$
is homotopy cofinal, we can take $\tilde X=X$ on the subcategory
$\cI$, and
\[ \tilde X(\omega)\ = \ X(\omega)\ = \ \colim_{\cN} X \ .\] The
inclusion $j\colon M\to \cI_\omega$ of categories is homotopy
cofinal. Writing $L_hX$ for the homotopy left Kan extension of $X$
along $\cI \to \cI_\omega$, this implies that the canonical morphism
\[ ((L_h X)(\omega))_{h M} \ = \ \hocolim_{M} (L_h X\circ j) \ \to \ \hocolim_{\cI_\omega}L_h X \]
is another weak equivalence of simplicial sets,
see the dual version of \cite[Chapter\,XI, 9.2]{BK}.
Because sequential colimits of simplicial sets are fully homotopical,
the natural morphism from the homotopy Kan extension of $X$ at $\omega$
to the (categorical) Kan extension at $\omega$ is a weak equivalence.
So we obtain a chain of natural weak equivalences
\begin{equation}\label{eq:XhcI-identification} X_{h\cI} \ \xla{\ \sim\ } \ (L_h X)_{h\cI_\omega} \ \xla{\ \sim \ }\
   ((L_h X)(\omega))_{h M} \ \xra{\ \sim \ }\ X(\omega)_{h M}\ .\end{equation}
 This chain of equivalences proves the second item of the following proposition.
 The first item is a restatement of definitions.

 \begin{prop}\phantomsection \label{prop:omega translates}
   \begin{enumerate}[\em (i)]
   \item A morphism $f\colon X\to Y$ of $\cI$-spaces is an $\mathcal N$-equivalence 
     if and only if the morphism $f(\omega)\colon X(\omega)\to Y(\omega)$ is a weak equivalence
     of $M$-spaces.
   \item A morphism $f\colon X\to Y$ of $\cI$-spaces is an $\cI$-equivalence if and only if
     the morphism $f(\omega)\colon X(\omega)\to Y(\omega)$ is an $M$-equivalence of $M$-spaces.\qed
   \end{enumerate}
 \end{prop}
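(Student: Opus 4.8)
The plan is to obtain both statements as essentially immediate consequences of the material assembled above. For part~(i) I would just unwind the definitions: by definition $f$ is an $\mathcal N$-equivalence exactly when $f(\omega)\colon X(\omega)\to Y(\omega)$ is a weak equivalence of underlying simplicial sets, and a morphism of $M$-spaces is by definition a weak equivalence exactly when it is a weak equivalence of underlying simplicial sets after forgetting the $M$-action. These two conditions coincide, so part~(i) requires nothing further.

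For part~(ii) the key point is that the chain of weak equivalences in \eqref{eq:XhcI-identification},
\[ X_{h\cI} \ \xla{\ \sim\ } \ (L_h X)_{h\cI_\omega} \ \xla{\ \sim \ }\ ((L_h X)(\omega))_{h M} \ \xra{\ \sim \ }\ X(\omega)_{h M}\ , \]
is natural in the $\cI$-space $X$. Applying it to the source and the target of $f$ produces a commuting ladder whose vertical maps are $f_{h\cI}$, $(L_h f)_{h\cI_\omega}$, $((L_h f)(\omega))_{h M}$ and $f(\omega)_{h M}$ and whose horizontal maps are all weak equivalences. Repeated application of the two-out-of-three property then shows that $f_{h\cI}$ is a weak equivalence if and only if $f(\omega)_{h M}$ is one. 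Since $f$ is an $\cI$-equivalence precisely when $f_{h\cI}$ is a weak equivalence and $f(\omega)$ is an $M$-equivalence precisely when $f(\omega)_{h M}$ is a weak equivalence, the desired equivalence of conditions follows.

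I would emphasize that all of the substantive input has already gone into the construction of \eqref{eq:XhcI-identification}: the homotopy cofinality of the functors $\cN\to\cI/\omega$ and $j\colon M\to\cI_\omega$, together with the full homotopy invariance of sequential colimits of simplicial sets, which is what allows one to replace the homotopy left Kan extensions $L_h X$ at $\omega$ by the categorical colimits $X(\omega)$. There is therefore no genuine obstacle here; the only point that needs a moment's care is verifying that each of the three maps in \eqref{eq:XhcI-identification} is natural in $X$, which is immediate from the way each of them is defined.
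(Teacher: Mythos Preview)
Your proof is correct and matches the paper's approach exactly: the paper states that part~(i) is a restatement of definitions and that part~(ii) follows from the natural chain of weak equivalences~\eqref{eq:XhcI-identification}, which is precisely what you spell out. Your additional remarks on naturality and on where the substantive input lies are accurate elaborations of the paper's terse justification.
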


 The category $\cI$ supports a permutative structure
 \[ - \concat - \colon \cI \times \cI \to \cI  \]
 by concatenation of finite sets. On objects, it is given by $\bld{m}\concat\bld{n} = \bld{m+n}$.
 The concatenation of two morphisms
 $\alpha\colon \mathbf m\to\mathbf k$ and $\beta\colon \mathbf n\to\mathbf l$ is given by
\[ (\alpha\concat \beta)(i)\ = \
  \begin{cases}
    \qquad \alpha(i) & \text{ for $1\leq i\leq m$, and}\\
    \beta(i-m)+k & \text{ for $m+1\leq i\leq m+n$.}
  \end{cases}\]
 The {\em box product} $X\boxtimes Y$ of two $\cI$-spaces $X$ and $Y$ is the convolution product for the
 concatenation monoidal structure, i.e., the left Kan extension of the functor
 \[ \cI\times \cI \ \to\ \sset\ , \quad (
   \mathbf m,\mathbf n)\ \longmapsto \ X(\mathbf m)\times Y(\mathbf n) \]
 along $- \concat -\colon \cI\times \cI\to \cI$.

 The box product of $\cI$-spaces comes with two `projections', i.e., morphisms of $\cI$-spaces
 \[ q^1 \colon  X\boxtimes Y \ \to \ X\text{\qquad and\qquad} q^2\colon X\boxtimes Y \ \to \ Y\ .\]
 The morphism $q^1$ corresponds, via the universal property of the box product,
 to the bimorphism given by the composite
 \[ X(\mathbf m)\times Y(\mathbf n) \ \xra{\text{project}}\
   X(\mathbf m) \ \xra{X(i^1)}\ X(\bld{m}\concat\bld{n})\ ,\]
 for $m,n\geq 0$, where $i^1\colon \mathbf m\to\bld{m}\concat\bld{n}$ is the inclusion.
 The morphism $q^2$ corresponds, via the universal property of the box product,
 to the bimorphism given by the composite
 \[ X(\mathbf m)\times Y(\mathbf n) \ \xra{\text{project}}\
   Y(\mathbf n) \ \xra{Y(i^2)}\ Y(\bld{m}\concat\bld{n})\ ,\]
 where $i^2\colon \mathbf n\to\bld{m}\concat\bld{n}$ sends $i$ to $m+i$.

 We introduced the box product of $M$-spaces in Definition \ref{def:box-product}
 as the $M$-invariant subspace of disjointly supported pairs inside the product.
 The image of the morphism of $M$-spaces
 \[ (q^1(\omega),q^2(\omega))\colon (X\boxtimes Y)(\omega)\ \to \ X(\omega)\times Y(\omega) \]
 lands in the subspace $X(\omega)\boxtimes  Y(\omega)$, and we write
 \[ \tau_{X,Y}\colon    (X\boxtimes Y)(\omega)\ \to \ X(\omega)\boxtimes  Y(\omega) \]
 for the restriction of $(q^1(\omega),q^2(\omega))$ to this image.
 The next proposition says that the transformation $\tau$ is a strong symmetric monoidal structure on the functor 
 \[ (-)(\omega)\colon \sset^{\cI}\ \to \ \sset^{M}_{\tm} \ .\]

 \begin{prop}\label{prop:strong monoidal}
  For all $\cI$-spaces $X$ and $Y$, the morphism
  $\tau_{X,Y}\colon (X\boxtimes Y)(\omega)\to  X(\omega)\boxtimes  Y(\omega)$
  is an isomorphism of $M$-spaces. 
 \end{prop}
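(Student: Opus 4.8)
The plan is to reduce the statement to the case of representable $\cI$-spaces, where it becomes the isomorphism $\cI_{m+n}\iso\cI_m\boxtimes\cI_n$ recorded in Example~\ref{eg:box of semifree}. To set this up I would first collect the relevant cocontinuity properties. The functor $(-)(\omega)\colon\sset^{\cI}\to\sset^M_{\tm}$ is the composite of restriction along the subcategory $\cN\subseteq\cI$ with $\colim_{\cN}$, hence preserves all colimits; moreover it, the two box products, and the transformation $\tau$ are all computed degreewise in the simplicial direction. The box product of $\cI$-spaces is the left Kan extension along $-\concat-$ of the external product $(\bld m,\bld n)\mapsto X(\bld m)\times Y(\bld n)$, which is cocontinuous in each variable, so $X\boxtimes-$ and $-\boxtimes Y$ preserve colimits for all $\cI$-spaces $X,Y$; and for every tame $M$-space $V$ the functor $V\boxtimes-$ on $\sset^M_{\tm}$ preserves colimits by Corollary~\ref{cor:boxtimes-preserves-colimits}, applied in each simplicial degree. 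Since $\tau$ is natural in both variables and compatible with these colimit comparisons, the collection of pairs $(X,Y)$ for which $\tau_{X,Y}$ is an isomorphism is closed under colimits in each variable separately.

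With this in hand I would carry out the reduction. Because all functors in sight are computed degreewise in the simplicial direction, it suffices to treat $\cI$-sets, i.e.\ functors $\cI\to\set$. Every $\cI$-set is canonically a colimit of representable $\cI$-sets $\cI(\bld m,-)$, so by the previous paragraph it is enough to prove that $\tau_{\cI(\bld m,-),\cI(\bld n,-)}$ is an isomorphism for all $m,n\geq 0$. This uses two identifications. First, $\cI(\bld m,-)(\omega)=\colim_{\cN}\cI(\bld m,-)$ is the set of injections $\bld m\to\omega$ with finite image, which is all of $\cI_m$, and the $M$-action of Construction~\ref{con:M action} is postcomposition; thus $\cI(\bld m,-)(\omega)\iso\cI_m$ as $M$-sets. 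Second, the box product of two representables is the representable on their monoidal product, $\cI(\bld m,-)\boxtimes\cI(\bld n,-)\iso\cI(\bld m\concat\bld n,-)=\cI(\bld{m+n},-)$, so that $\big(\cI(\bld m,-)\boxtimes\cI(\bld n,-)\big)(\omega)\iso\cI_{m+n}$.

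Finally I would unwind the definition of $\tau$ in the representable case. Under the identification $\cI(\bld m,-)\boxtimes\cI(\bld n,-)\iso\cI(\bld{m+n},-)$, the universal property of the box product together with the Yoneda lemma shows that $q^1$ corresponds to precomposition with the inclusion $i^1\colon\bld m\to\bld{m+n}$ and $q^2$ to precomposition with $i^2\colon\bld n\to\bld{m+n}$, $j\mapsto m+j$. Hence, evaluated at $\omega$, the map $(q^1(\omega),q^2(\omega))$ becomes $\cI_{m+n}\to\cI_m\times\cI_n$, $f\mapsto(f i^1,f i^2)$, which is precisely the map of Example~\ref{eg:box of semifree} and restricts to the isomorphism $\rho$ onto the subspace $\cI_m\boxtimes\cI_n$. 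Therefore $\tau_{\cI(\bld m,-),\cI(\bld n,-)}=\rho$ is an isomorphism, and the reduction is complete. I expect the main obstacle to be organizational rather than conceptual: one has to verify carefully that $\tau$ is compatible with the colimit decompositions invoked in the reduction, and that it really does restrict to $\rho$ on representables, so that no coherence is lost when passing from universal properties to the explicit formula.
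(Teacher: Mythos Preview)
Your proposal is correct and follows essentially the same strategy as the paper's own proof: reduce to $\cI$-sets by working dimensionwise, then to representables by cocontinuity of both sides in each variable, and finally invoke the isomorphism $\rho$ of Example~\ref{eg:box of semifree}. Your write-up is in fact more detailed than the paper's, spelling out the cocontinuity of $\boxtimes$ on the $M$-side via Corollary~\ref{cor:boxtimes-preserves-colimits} and explicitly checking that $\tau$ on representables agrees with~$\rho$.
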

 \begin{proof}
   All structure is sight is defined dimensionwise in the simplicial direction, so it suffices
   to prove the claim for $\cI${\em-sets} (as opposed to $\cI$-spaces).
   Both sides of the transformation $\tau$ preserve colimits in each variable,
   which reduces the claim to representable $\cI$-sets.
   The convolution product of represented functors is represented, i.e.,
   \[ \cI(\mathbf m,-)\boxtimes \cI(\mathbf n,-)\ \iso \ \cI(\bld{m}\concat\bld{n},-) \ ,  \]
   and $\cI(\mathbf m,-)(\omega)$ is isomorphic to the $M$-set $\cI_m$ discussed in Example \ref{ex:I_m}.
   So the case of represented functors is taken care of by Example \ref{eg:box of semifree}.
 \end{proof}

 The next statement in particular contains Theorem~\ref{thm:intro:boxtimes-fully-homotopical}
 from the introduction. 

 \begin{thm}\label{thm:box invariance}
  For every $\cI$-space $X$, the functor $X\boxtimes -$ preserves
  $\mathcal N$-equivalences and $\cI$-equivalences of $\cI$-spaces.
\end{thm}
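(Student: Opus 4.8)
The plan is to transport the assertion across the functor $(-)(\omega)\colon \sset^\cI \to \sset^M_{\tm}$ and invoke the results already established for tame $M$-spaces. The key inputs are Proposition~\ref{prop:strong monoidal}, which says $(X\boxtimes Y)(\omega)\iso X(\omega)\boxtimes Y(\omega)$ naturally, and Proposition~\ref{prop:omega translates}, which says that $f$ is an $\cN$-equivalence (resp. $\cI$-equivalence) of $\cI$-spaces if and only if $f(\omega)$ is a weak equivalence (resp. $M$-equivalence) of tame $M$-spaces. So, given a morphism $f\colon Y\to Z$ of $\cI$-spaces that is an $\cN$-equivalence, the morphism $(X\boxtimes f)(\omega)$ is, up to the natural isomorphism $\tau$, identified with $X(\omega)\boxtimes f(\omega)$; since $f(\omega)$ is a weak equivalence of tame $M$-spaces and $X(\omega)$ is tame (Construction~\ref{con:M action}), Theorem~\ref{thm:box preserves weak M} shows $X(\omega)\boxtimes f(\omega)$ is a weak equivalence of $M$-spaces. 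Hence $(X\boxtimes f)(\omega)$ is a weak equivalence, so $X\boxtimes f$ is an $\cN$-equivalence by Proposition~\ref{prop:omega translates}(i). The same argument with Theorem~\ref{thm:invariance M} (via its consequence Theorem~\ref{thm:intro:boxtimes-M-fully-homotopical}) in place of Theorem~\ref{thm:box preserves weak M} handles the case of $\cI$-equivalences: if $f(\omega)$ is an $M$-equivalence then so is $X(\omega)\boxtimes f(\omega)$, hence $X\boxtimes f$ is an $\cI$-equivalence by Proposition~\ref{prop:omega translates}(ii).

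The one point that needs care, and which I expect to be the main obstacle, is the \emph{naturality} that makes the identification of $(X\boxtimes f)(\omega)$ with $X(\omega)\boxtimes f(\omega)$ legitimate. One must check that the isomorphism $\tau_{X,Y}$ of Proposition~\ref{prop:strong monoidal} is natural in both variables, so that the square
\[
\xymatrix{
(X\boxtimes Y)(\omega) \ar[r]^-{\tau_{X,Y}} \ar[d]_{(X\boxtimes f)(\omega)} & X(\omega)\boxtimes Y(\omega) \ar[d]^{X(\omega)\boxtimes f(\omega)} \\
(X\boxtimes Z)(\omega) \ar[r]^-{\tau_{X,Z}} & X(\omega)\boxtimes Z(\omega)
}
\]
commutes. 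This is essentially formal from the definition of $\tau$ as the factorization of $(q^1(\omega),q^2(\omega))$ through the disjointly-supported subspace, together with the naturality of the projections $q^1,q^2$ in both variables and the functoriality of $(-)(\omega)$; the only subtlety is confirming that the target restriction is compatible, i.e. that the supports behave correctly, which follows since $f(\omega)$ does not increase supports. I would record this naturality explicitly as part of the proof rather than leaving it implicit in Proposition~\ref{prop:strong monoidal}.

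It remains to observe that the functor $(-)(\omega)$ takes $X\boxtimes -$ to $X(\omega)\boxtimes -$ up to this natural isomorphism, which is precisely the content just verified, and that $X(\omega)$ is a genuine tame $M$-space so that the cited homotopical results apply; both are immediate. Assembling these pieces gives both halves of Theorem~\ref{thm:box invariance}, and the $\cI$-equivalence half in particular yields Theorem~\ref{thm:intro:boxtimes-fully-homotopical} from the introduction.
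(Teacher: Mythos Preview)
Your proposal is correct and follows essentially the same route as the paper: translate via $(-)(\omega)$ using Proposition~\ref{prop:omega translates}, apply the tame $M$-space results (Theorems~\ref{thm:box preserves weak M} and~\ref{thm:intro:boxtimes-M-fully-homotopical}) to $X(\omega)\boxtimes f(\omega)$, and transfer back using the strong monoidality isomorphism of Proposition~\ref{prop:strong monoidal}. Your added discussion of the naturality of $\tau_{X,Y}$ is a reasonable elaboration of a point the paper leaves implicit, but otherwise the arguments coincide.
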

\begin{proof}
  We let $f\colon Y\to Z$ be a morphism of $\cI$-spaces that is an
  $\mathcal N$-equivalence or $\cI$-equivalence, respectively.
  Then by Proposition \ref{prop:omega translates}, the morphism of tame $M$-spaces
  $f(\omega)\colon Y(\omega)\to Z(\omega)$ is a weak equivalence or $M$-equivalence, respectively.
  So the morphism 
  \[  X(\omega)\boxtimes  f(\omega)\colon X(\omega)\boxtimes  Y(\omega)\ \to\ X(\omega)\boxtimes  Z(\omega)\]
  is a weak equivalence by Theorem \ref{thm:box preserves weak M},
  or an $M$-equivalence by Theorem~\ref{thm:intro:boxtimes-M-fully-homotopical}, respectively.
  By the isomorphism of Proposition \ref{prop:strong monoidal}, this means that
  \[  (X\boxtimes f)(\omega)\colon (X\boxtimes Y)(\omega)\ \to\ (X\boxtimes Z)(\omega)\]
  is a weak equivalence or an $M$-equivalence, respectively.
  Another application of Proposition \ref{prop:omega translates} proves the claim.
\end{proof}

The next statement generalizes~\cite[Corollary 2.29]{Sagave-S_group-compl}.

\begin{cor}\label{cor:box to product}
  The morphism $(q^1_{h\cI},q^2_{h\cI}) \colon (X\boxtimes Y)_{h\cI} \to 
  X_{h\cI}\times Y_{h\cI}$ is a weak equivalence of simplicial sets
  for all $\cI$-spaces $X$ and $Y$.
\end{cor}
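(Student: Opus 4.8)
The plan is to reduce the statement to the corresponding fact about tame $M$-spaces proved earlier, namely Theorem~\ref{thm:invariance M}, using the chain of natural weak equivalences~\eqref{eq:XhcI-identification} and the monoidal comparison of Proposition~\ref{prop:strong monoidal}. First I would record that the square relating the projections $q^1, q^2$ of the $\cI$-space box product to the projections $p^1, p^2$ of the $M$-space box product commutes: applying $(-)(\omega)$ to $q^i \colon X\boxtimes Y \to (\text{factor})$ and postcomposing with the comparison map to $X(\omega)\times Y(\omega)$ gives exactly $p^i(\omega) \circ \tau_{X,Y}$, by the very definition of $\tau_{X,Y}$ as the corestriction of $(q^1(\omega), q^2(\omega))$. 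Since $\tau_{X,Y}$ is an isomorphism of $M$-spaces by Proposition~\ref{prop:strong monoidal}, the morphism $(q^1(\omega), q^2(\omega))\colon (X\boxtimes Y)(\omega) \to X(\omega)\times Y(\omega)$ is identified with $(p^1, p^2)\colon X(\omega)\boxtimes Y(\omega) \to X(\omega)\times Y(\omega)$.

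Next I would apply $(-)_{hM}$ to this identified morphism. By Theorem~\ref{thm:invariance M} applied to the tame $M$-spaces $X(\omega)$ and $Y(\omega)$, the induced map
\[ \bigl((X\boxtimes Y)(\omega)\bigr)_{hM} \ \longrightarrow\ \bigl(X(\omega)\bigr)_{hM}\times \bigl(Y(\omega)\bigr)_{hM} \]
is a weak equivalence of simplicial sets. It then remains to transport this through~\eqref{eq:XhcI-identification}. Applying that chain of natural weak equivalences to the $\cI$-space $X\boxtimes Y$ identifies $(X\boxtimes Y)_{h\cI}$ with $\bigl((X\boxtimes Y)(\omega)\bigr)_{hM}$, and applying it separately to $X$ and to $Y$ identifies $X_{h\cI}\times Y_{h\cI}$ with $\bigl(X(\omega)\bigr)_{hM}\times \bigl(Y(\omega)\bigr)_{hM}$ (a product of weak equivalences of simplicial sets is a weak equivalence). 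The only thing to check is that these identifications are compatible with the projection morphisms, i.e.\ that the relevant square commutes up to the natural zig-zag; this follows because every comparison map in~\eqref{eq:XhcI-identification} is natural in the $\cI$-space argument, and $q^1, q^2$ are morphisms of $\cI$-spaces. A two-out-of-three argument along the zig-zag then yields that $(q^1_{h\cI}, q^2_{h\cI})$ is a weak equivalence.

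The main obstacle, such as it is, is bookkeeping rather than conceptual: one must be careful that the weak equivalences in~\eqref{eq:XhcI-identification} are genuinely natural transformations defined on the whole category $\sset^{\cI}$ (not just pointwise comparisons), so that they can be applied simultaneously to $X$, $Y$ and $X\boxtimes Y$ and fit into a commuting ladder of zig-zags with the maps $q^1, q^2$; this is where one invokes the naturality built into Smith's observation and the cofinality statements recalled before~\eqref{eq:XhcI-identification}. Given that naturality, the two-out-of-three property of weak equivalences of simplicial sets finishes the proof with no further work.

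\begin{proof}
  By Proposition~\ref{prop:strong monoidal} the natural map
  $\tau_{X,Y}\colon (X\boxtimes Y)(\omega)\to X(\omega)\boxtimes Y(\omega)$
  is an isomorphism of $M$-spaces, and by construction it fits into a commutative
  triangle with $(q^1(\omega),q^2(\omega))$ and the inclusion
  $X(\omega)\boxtimes Y(\omega)\to X(\omega)\times Y(\omega)$. Composing with the
  two projections identifies $q^i(\omega)$ with $p^i\circ\tau_{X,Y}$ for $i=1,2$.
  Hence, after applying $(-)_{hM}$, the morphism
  $(q^1(\omega)_{hM},q^2(\omega)_{hM})$ is identified with
  \[ (p^1_{hM},p^2_{hM})\colon \bigl(X(\omega)\boxtimes Y(\omega)\bigr)_{hM}\ \to\
     X(\omega)_{hM}\times Y(\omega)_{hM}\ ,\]
  which is a weak equivalence of simplicial sets by Theorem~\ref{thm:invariance M},
  since $X(\omega)$ and $Y(\omega)$ are tame $M$-spaces.

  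The chain of natural weak equivalences~\eqref{eq:XhcI-identification} is natural in
  the $\cI$-space argument. Applying it to $X$, to $Y$, and to $X\boxtimes Y$, and
  using that the morphisms $q^1,q^2$ of $\cI$-spaces induce, at each stage of the
  zig-zag, a commuting square, we obtain a ladder relating
  $(q^1_{h\cI},q^2_{h\cI})\colon (X\boxtimes Y)_{h\cI}\to X_{h\cI}\times Y_{h\cI}$
  to $(q^1(\omega)_{hM},q^2(\omega)_{hM})$. Here we use that a product of weak
  equivalences of simplicial sets is again a weak equivalence, so the target zig-zags
  assemble into a zig-zag of weak equivalences between $X_{h\cI}\times Y_{h\cI}$ and
  $X(\omega)_{hM}\times Y(\omega)_{hM}$. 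Since the bottom map of the ladder is a weak
  equivalence and all vertical maps are weak equivalences, the two-out-of-three
  property shows that $(q^1_{h\cI},q^2_{h\cI})$ is a weak equivalence of simplicial
  sets.
\end{proof}
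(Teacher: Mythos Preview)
Your proof is correct and follows precisely the second of the two approaches indicated in the paper: reducing to tame $M$-spaces via the natural chain of weak equivalences~\eqref{eq:XhcI-identification}, identifying $(q^1(\omega),q^2(\omega))$ with $(p^1,p^2)$ through the isomorphism $\tau_{X,Y}$ of Proposition~\ref{prop:strong monoidal}, and then invoking Theorem~\ref{thm:invariance M} together with two-out-of-three. The paper also records an alternative route---using Theorem~\ref{thm:box invariance} to replace $X$ by a flat cofibrant object and then citing the previously known special case from~\cite{Sagave-S_group-compl}---but your argument is self-contained within this paper and is exactly what the authors have in mind as the direct verification.
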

\begin{proof}
  Using the previous theorem, this follows from~\cite[Corollary
  2.29]{Sagave-S_group-compl} by arguing with a flat cofibrant
  replacement of $X$. Alternatively, one can use the chain of weak
  equivalences~\eqref{eq:XhcI-identification} to reduce this to a
  statement about tame $M$-spaces and then argue with Proposition
  \ref{prop:strong monoidal} and Theorem \ref{thm:invariance M} to
  directly verify the claim.
\end{proof}

\section{Flat \texorpdfstring{$\cI$}{I}-spaces and tame \texorpdfstring{$M$}{M}-spaces}\label{sec:flat-I}

In this section we show that the functor
$\colim_{\cN} = (-)(\omega) \colon \sset^{\cI} \to \sset^M$
from Construction~\ref{con:M action} is a left adjoint,
and that it identifies a certain full subcategory of {\em flat} $\cI$-sets
with the category of tame $M$-sets.
We will then use this adjunction for the homotopical analysis
of flat $\cI$-spaces and tame $M$-spaces.

\subsection*{Flat \texorpdfstring{$\cI$}{I}-sets}
The following definition singles out a particular class of `flat'
$\cI$-sets. The $\cI$-spaces previously called `flat' are precisely
the ones that are dimensionwise flat as $\cI$-sets, see Remark
\ref{rk:flat is flat}.

\begin{defn}\label{def:flat I-set}
  An $\cI$-set $X$ is {\em flat} if the following two conditions hold:
  \begin{enumerate}[(i)]
  \item for every morphism $\alpha\colon \bld{m}\to\bld{n}$ in $\cI$,
    the map $X(\alpha)\colon X(\bld{m})\to X(\bld{n})$ is injective.
  \item The functor $X$ sends pullback squares in $\cI$ to pullback squares of sets. 
  \end{enumerate}
\end{defn}

\begin{rk}\label{rk:flat is flat}
The terminology just introduced is consistent with the usage of the
adjective `flat' in the context of $\cI$-spaces -- despite the fact
that the definitions look very different at first sight.
We recall from \cite[Definition~3.9]{Sagave-S_diagram} that an
$\cI$-space $X$ is {\em flat} if for every $\bld{n}$ in $\cI$, the
latching morphism
\begin{equation}\label{eq:latching-map} \nu_n^X\colon L_{\bld{n}} X\ = \ \left(\colim_{\bld{m} \to \bld{n} \in \partial
    (\cI / \bld{n})} X(\bld{m})\right)\ \to \ X(\bld{n}) 
\end{equation}
is a monomorphism of simplicial sets;
here $\partial(\cI / \bld{n})$ denotes the full subcategory of the over-category
$(\cI / {\bld{n}})$ on the objects that are non-isomorphisms.
The flatness criterion established
in~\cite[Proposition\,3.11]{Sagave-S_diagram} precisely says that
an $\cI$-space is flat if and only if the $\cI$-set of $q$-simplices
is flat in the sense of Definition \ref{def:flat I-set} for every $q\geq 0$.

The flat $\cI$-spaces are relevant for us because they are
the cofibrant objects of a \emph{flat $\cI$-model structure}
on $\sset^{\cI}$ with weak equivalences the $\cI$-equivalences
\cite[Proposition~3.10]{Sagave-S_diagram}.
\end{rk}

The following combinatorial property amounts to the fact that
every monomorphism between flat $\cI$-spaces is automatically a flat cofibration
in the sense of \cite[Definition~3.9]{Sagave-S_diagram},
a fact that does seem to have been noticed before.

\begin{prop}\label{prop:monos are flat cofibrations}
  Let $i\colon X\to Y$ be a monomorphism between flat $\cI$-sets.
  Then for every $n\geq 0$, the induced map
  \[ \nu_n^Y\cup i(\bld{n}) \colon L_n Y\cup_{L_n X} X(\bld{n}) \ \to \ Y(\bld{n})\]
is injective.
\end{prop}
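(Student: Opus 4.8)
The plan is to analyze the map $\nu_n^Y \cup i(\bld{n})$ on elements, using the concrete description of latching objects for flat $\cI$-sets. First I would recall that for a flat $\cI$-set $Z$, the latching map $\nu_n^Z\colon L_n Z \to Z(\bld{n})$ is itself injective (this is essentially condition (i) of Definition~\ref{def:flat I-set} combined with the colimit description, since all structure maps are monomorphisms and the over-category colimit is a union of subsets). Thus I may identify $L_n Z$ with a subset of $Z(\bld{n})$, namely the union $\bigcup_{\alpha} Z(\alpha)(Z(\bld{m}))$ over all non-surjective injections $\alpha\colon \bld{m}\to\bld{n}$ (equivalently, over all proper subset inclusions $\bld{m}\hookrightarrow\bld{n}$, since any non-isomorphism factors through one). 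Call an element of $Z(\bld{n})$ \emph{old} if it lies in $L_n Z$ and \emph{new} otherwise. With these identifications, the map in question sends old elements of $Y(\bld{n})$ (the $L_n Y$ summand) via the inclusion into $Y(\bld{n})$, and sends $X(\bld{n})$ via $i(\bld{n})$; the pushout $L_n Y \cup_{L_n X} X(\bld{n})$ glues these along $L_n X = L_n Y \cap i(X(\bld{n}))$, where the last equality must be justified.

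The key step is therefore to prove that $i$ carries $L_n X$ \emph{exactly} onto $L_n Y \cap i(X(\bld{n}))$ inside $Y(\bld{n})$ --- i.e., if $x \in X(\bld{n})$ and $i(\bld{n})(x)$ is old in $Y(\bld{n})$, then $x$ is already old in $X(\bld{n})$. This is where condition (ii) of flatness (preservation of pullbacks) enters. Suppose $i(\bld{n})(x) = Y(\alpha)(y')$ for some proper inclusion $\alpha\colon \bld{m}\hookrightarrow\bld{n}$ and some $y'\in Y(\bld{m})$. I would consider the pullback square in $\cI$ with terminal vertex $\bld{n}$ and the two maps being $\alpha$ and the identity --- or more usefully, realize that flatness lets me compute the "support" of $x$: the intersection over all subsets $\bld{k}\subseteq\bld{n}$ through which $i(\bld{n})(x)$ factors is again a subset through which it factors (by pullback-preservation applied to intersections of subsets of $\bld{n}$, which are pullbacks in $\cI$). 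The same support computation applies to $x$ in $X$. Since $i$ is a monomorphism of $\cI$-sets, a square expressing $X$ as a "subfunctor" of $Y$ is cartesian in the relevant sense, so the minimal subset supporting $i(\bld{n})(x)$ coincides with the minimal subset supporting $x$; if the former is proper in $\bld{n}$, so is the latter, and $x$ is old. Granting this, the pushout $L_n Y \cup_{L_n X} X(\bld{n})$ is literally the union $L_n Y \cup i(X(\bld{n}))$ taken inside $Y(\bld{n})$ (two subsets glued along their intersection), and the canonical map to $Y(\bld{n})$ is the inclusion of that union --- hence injective.

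The main obstacle I anticipate is the bookkeeping around latching objects: making precise that $L_n Z$ for a flat $\cI$-set is canonically the union of images of proper subset-inclusions inside $Z(\bld{n})$, and that the colimit defining the pushout $L_n Y \cup_{L_n X} X(\bld{n})$ is computed on underlying sets as an honest union. Both facts are "well known" in spirit but need the pullback condition to pin down that images of different subsets intersect exactly in the image of their intersection --- without that, the union would not be a pushout along the right object. Once the pullback condition is leveraged to control these intersections (both within $Y$ and compatibly across $i\colon X\to Y$), the injectivity statement becomes the tautology that a union of two subsets, presented as a pushout along their intersection, includes into the ambient set. I would organize the write-up as: (1) identify $L_n Z$ with a subset of $Z(\bld{n})$ for flat $Z$; (2) prove the intersection-of-images formula from pullback-preservation; (3) prove $i(L_n X) = L_n Y \cap i(X(\bld{n}))$; (4) conclude.
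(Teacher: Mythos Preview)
Your overall strategy is exactly the paper's: identify $L_n Z$ with a subset of $Z(\bld{n})$ for flat $Z$, reduce the injectivity of $\nu_n^Y\cup i(\bld{n})$ to showing that the latching square
\[
\xymatrix{ L_n X \ar[r]^-{\nu_n^X}\ar[d]_{L_n i} & X(\bld{n}) \ar[d]^{i(\bld{n})} \\ L_n Y \ar[r]_-{\nu_n^Y} & Y(\bld{n}) }
\]
is a pullback (your ``$i(L_n X)=L_n Y\cap i(X(\bld{n}))$''), and then observe that a pushout of two subsets along their intersection embeds in the ambient set. So far, so good.

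The gap is in the step you flag as ``where condition (ii) of flatness enters''. You assert that because $i$ is a monomorphism, ``a square expressing $X$ as a subfunctor of $Y$ is cartesian in the relevant sense'', and hence the minimal supporting subset of $x$ agrees with that of $i(x)$. This is the entire content of the proposition, not a consequence of $i$ being a monomorphism: a levelwise monomorphism of functors need not have cartesian naturality squares, and your support argument uses flatness only to show the minimal supporting subset \emph{exists} in $X$ and in $Y$ separately, not that the two agree. Concretely, knowing that $i(x)=\alpha_*(y)$ for some $y\in Y(\bld{n-1})$ gives you no element of $X(\bld{n-1})$ to work with; you must manufacture one, and that requires the flatness of $X$ in a more substantive way than computing an intersection.

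The paper fills this gap as follows. Given $x\in X(\bld{n})$ with $i(x)=\alpha_*(y)$, choose two injections $\beta,\beta'\colon\bld{n}\to\bld{n+1}$ that agree on the image of $\alpha$ but differ on the one element outside it. Then $\beta_*(i(x))=\beta'_*(i(x))$ in $Y(\bld{n+1})$ because $i(x)$ factors through $\alpha$; since $i$ is a monomorphism this forces $\beta_*(x)=\beta'_*(x)$ in $X(\bld{n+1})$. Now the commutative square in $\cI$ formed by $\alpha,\alpha,\beta,\beta'$ is a pullback, so flatness of $X$ (condition (ii) applied to \emph{this} square, which lives one level up) produces $z\in X(\bld{n-1})$ with $\alpha_*(z)=x$. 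This is the missing idea: you have to pass to $\bld{n+1}$ to get a nontrivial pullback square in $\cI$ whose image under $X$ detects factorization through $\alpha$. Your plan as written never leaves level $\bld{n}$, and at that level there is no pullback square that does the job.
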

\begin{proof} The argument in the proof of~\cite[Proposition\,3.11]{Sagave-S_diagram} shows
that the latching maps  $\nu_n^X$ and $\nu_n^Y$ are injections.
  We claim that for every $n\geq 0$, the commutative square of sets
  \[ \xymatrix{
      L_n X     \ar[r]^-{\nu_n^X}\ar[d]_{L_n i} & X(\bld{n})\ar[d]^{i(\bld{n})}\\
      L_n Y\ar[r]_-{\nu_n^Y} &  Y(\bld{n})
    } \]
  is a pullback. Since $i(\bld{n})$ is also injective,
  this implies that the pushout of the punctured square injects into $Y(\bld{n})$.
  Elements in the latching set $L_n Y$ are equivalence classes of pairs $(\alpha,y)$,
  where $\alpha\colon\bld{n-1}\to\bld{n}$ is an injection and $y\in Y(\bld{n-1})$.
  For the pullback property we consider an element $x\in X(\bld{n})$ such that
  \[ \nu_n^Y[\alpha,y]\ = \ \alpha_*(y)\ = \ i(\bld{n})(x) \]
  holds in $Y(\bld{n})$.
  We let $\beta,\beta'\colon \bld{n}\to\bld{n+1}$ be the two injections that
  satisfy $\beta\circ\alpha=\beta'\circ\alpha$
  and that differ on the unique element that is not in the image of $\alpha$.
  Then
  \begin{align*}
    i(\bld{n+1})(\beta_*(x))\
    &= \ \beta_*(i(\bld{n})(x))\
      = \ \beta_*(\alpha_*(y))\\
    &= \ \beta'_*(\alpha_*(y))\
      = \ \beta'_*(i(\bld{n})(x))\ =\
      i(\bld{n+1})(\beta'_*(x))\ .
  \end{align*}
  Because $i$ is a monomorphism, we conclude that $\beta_*(x)=\beta'_*(x)$.
  The following square is a pullback because $X$ is flat:
  \[\xymatrix@C=15mm{
      X(\bld{n-1})\ar[r]^-{\alpha_*}\ar[d]_{\alpha_*} &  X(\bld{n}) \ar[d]^{\beta'_*}\\
      X(\bld{n})\ar[r]_-{\beta_*}  &       X(\bld{n+1})
    }  \]
  So there is an element $z\in X(\bld{n-1})$ such that $\alpha_*(z)=x$;
  equivalently, $x$ is the image of $[\alpha,z]$ under the latching map $\nu_n^X\colon L_n X\to X(\bld{n})$.
  Now we also get
  \[
    \nu_n^Y( (L_n i)[\alpha,z])\ = \ i(\bld{n})(\nu_n^X[\alpha,z])
    \ = \ i(\bld{n})(x) \ = \  \nu_n^Y[\alpha,y]
    \ .  \]
  Since $Y$ is flat, its latching map $\nu_n^Y$ is injective, and we conclude that
  $(L_n i)[\alpha,z]=[\alpha,y]$ in $L_n Y$. This completes the proof of
  the pullback property, and hence the proof of the proposition.
\end{proof}

\begin{con}[From $M$-sets to $\cI$-sets]\label{con:tame-M-to-I}
  We let $W$ be an $M$-set and $m\geq 0$.
  We write
  \[ W_{\bld{m}}\ = \ \{x\in W\ | \ \supp(x)\subseteq \bld{m}\} \]
  for the set of elements that are supported on $\bld{m}=\{1,\dots,m\}$.
  Given an injection $\alpha:\bld{m}\to\bld{n}$, we choose an extension
  $\tilde \alpha\in M$, i.e., such that $\tilde\alpha(i)=\alpha(i)$
  for $1\leq i\leq m$. We define
  \[ \alpha_* \colon W_{\bld{m}}\ \to\  W_{\bld{n}} \]
  by $\alpha_*(x)=\tilde\alpha x$, and this is independent
  of the choice of extension by Proposition \ref{prop:finite support} (i).
  These assignments are functorial in $\alpha$, i.e., the entire data
  defines an $\cI$-set $W_{\bullet}$.
  The inclusions $W_{\bld{m}} \subseteq W$ induce a natural morphism
  $\epsilon_W:(W_{\bullet})(\omega) \to W$ of $M$-sets. 
  
  If $X$ is an $\cI$-set, we write $X^{\flat}= X(\omega)_{\bullet}$.
  The canonical map $X(\bld{m}) \to X(\omega)$ takes values in $X(\omega)_{\bld{m}}$;
  for varying $m$, these maps provide a natural transformation
  $\eta_X:X \to X(\omega)_{\bullet}=X^{\flat}$ of $\cI$-sets.
\end{con}

\begin{prop}\phantomsection\label{prop:flat-replacement}
  \begin{enumerate}[\em (i)] 
  \item
    The morphisms $\eta_X\colon X\to X^{\flat}$
    and $\epsilon_W\colon (W_\bullet)(\omega)\to W$
    are the unit and counit of an adjunction
    \[\xymatrix{
        (-)(\omega)\colon \set^\cI\  \ar@<.4ex>[r] &\  \set^M \colon (-)_\bullet\ .\ar@<.4ex>[l]
      }\] 
  \item The adjunction counit $\epsilon_W$ is injective, and $\epsilon_W$ is surjective
    if and only if $W$ is tame.
  \item The adjunction unit $\eta_X \colon X \to X^{\flat}$ is
    an $\cN$-isomorphism.
  \item The adjunction unit
    $\eta_X\colon X \to X^{\flat}$ is an isomorphism
    if and only if $X$ is flat. 
  \item The restrictions of $(-)(\omega)$ and $(-)_\bullet$
    are an adjoint equivalence of categories
  \[\xymatrix{
      (-)(\omega)\colon \set^\cI_{\mathrm{flat}}\  \ar@<.4ex>[r] &\  \set^M_{\tm} \colon (-)_\bullet\ar@<.4ex>[l]
    }\] 
  between the full subcategories of flat $\cI$-sets and tame $M$-sets.
\end{enumerate}
\end{prop}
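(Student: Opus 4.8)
The plan is to prove the five items in the order stated; items (i)--(iii) and (v) are formal once the support calculus of Section~\ref{sec:tame-M-sets} is in place, while (iv) carries the real content.

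For (i) I would verify the two triangle identities directly on elements: for $x\in X(\bld m)$ the composite $X(\omega)\xrightarrow{(\eta_X)(\omega)}(X(\omega)_\bullet)(\omega)\xrightarrow{\epsilon_{X(\omega)}}X(\omega)$ sends $[x]$ back to $[x]$, and for $x\in W_{\bld m}$ the composite $W_\bullet\xrightarrow{\eta_{W_\bullet}}((W_\bullet)(\omega))_\bullet\xrightarrow{(\epsilon_W)_\bullet}W_\bullet$ sends $x$ back to $x$; equivalently, one checks that $\varphi\mapsto(x\mapsto\varphi([x]))$ is a natural bijection $\Hom_{\set^M}(X(\omega),W)\cong\Hom_{\set^\cI}(X,W_\bullet)$, using that $M$-equivariant maps satisfy $\supp(\varphi(z))\subseteq\supp(z)$. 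For (ii) I would observe that in Construction~\ref{con:tame-M-to-I} the inclusion $\bld m\hookrightarrow\bld{m+1}$ admits $\id_\omega$ as an extension, so the diagram $\bld m\mapsto W_{\bld m}$ over $\cN$ has all transition maps the inclusions $W_{\bld m}\subseteq W_{\bld{m+1}}$; hence $(W_\bullet)(\omega)=\bigcup_m W_{\bld m}=\{x\in W:\supp(x)\text{ finite}\}$ and $\epsilon_W$ is the inclusion of this tame $M$-subset into $W$, which is always injective and is surjective exactly when $W$ is tame.

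Item (iii) is then immediate: $X(\omega)$ is tame by Construction~\ref{con:M action}, so $\epsilon_{X(\omega)}$ is an isomorphism by (ii), and the triangle identity forces $(\eta_X)(\omega)$ to be its inverse. For the direction ``$X$ flat $\Rightarrow\eta_X$ iso'' of (iv), flatness condition~(i) of Definition~\ref{def:flat I-set} makes all maps $X(\bld m)\to X(\omega)$ injective (a sequential colimit of monomorphisms of sets), so each component $\eta_{X,m}\colon X(\bld m)\to X(\omega)_{\bld m}$ is injective; for surjectivity, given $[x]\in X(\omega)_{\bld m}$ with a representative $x\in X(\bld n)$, I would use that $[x]$ is supported on $\bld m$ to find $g\in M$ fixing $\bld m$ pointwise with $g(\{m+1,\dots,n\})$ disjoint from $\bld n$, deduce $X(\tilde g)(x)=X(\iota)(x)$ in $X(\bld k)$ for a suitable $k$ (again by injectivity of $X(\bld k)\to X(\omega)$), note that the pullback in $\cI$ of these two maps $\bld n\to\bld k$ is $\bld m$, and invoke flatness condition~(ii) to conclude $x$ comes from $X(\bld m)$, i.e.\ $[x]\in\eta_{X,m}(X(\bld m))$. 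For the converse, an isomorphism $\eta_X$ identifies $X$ with $X^{\flat}=(X(\omega))_\bullet$, so it remains to show that $W_\bullet$ is flat for \emph{every} $M$-set $W$.

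Proving that $W_\bullet$ is flat is the main obstacle, and this is where the support calculus of Propositions~\ref{prop:finite support} and~\ref{prop:algebraic consequences} is used. For condition~(i): given $\alpha\colon\bld m\to\bld n$ and $x,y\in W_{\bld m}$, choosing $h\in M$ with $h\tilde\alpha|_{\bld m}=\id_{\bld m}$ gives $h\tilde\alpha x=x$ and $h\tilde\alpha y=y$ since $x,y$ are supported on $\bld m$, so $\alpha_*x=\alpha_*y$ forces $x=y$. For condition~(ii): for a pullback square $\bld p=\bld m\times_{\bld n}\bld q$ in $\cI$, uniqueness in the pullback property for $W_\bullet$ follows at once from condition~(i) (the projection $\gamma_*$ is injective); for existence, given $(x,y)\in W_{\bld m}\times_{W_{\bld n}}W_{\bld q}$ one shows $\alpha(\supp(x))=\beta(\supp(y))$ via Proposition~\ref{prop:finite support}(ii), hence $\supp(x)\subseteq\gamma(\bld p)$ and $\supp(y)\subseteq\delta(\bld p)$, picks $g\in M$ with $g(\gamma(a))=a$ for all $a\in\bld p$, and checks that $z:=gx$ lies in $W_{\bld p}$ and satisfies $\gamma_*z=x$, $\delta_*z=y$ by comparing the injections $\tilde\gamma g$, $\tilde\delta g$ and a suitable $h\tilde\alpha$ on the finite set $\supp(x)$. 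I expect this elementwise bookkeeping with several injections of $M$ to be the most delicate point. Finally, (v) is formal: $(-)(\omega)$ takes values in $\set^M_{\tm}$ and, by the previous paragraph, $(-)_\bullet$ takes values in $\set^\cI_{\mathrm{flat}}$, so the adjunction of (i) restricts; its unit is an isomorphism on flat $\cI$-sets by (iv) and its counit is an isomorphism on tame $M$-sets by (ii), so the restricted adjunction is an adjoint equivalence.
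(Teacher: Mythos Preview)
Your proof is correct and follows the same overall architecture as the paper's: parts (i)--(iii) and (v) are handled formally, and (iv) carries the content via showing $W_\bullet$ is always flat and conversely that flat $\cI$-sets have $\eta_X$ an isomorphism.

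The one notable difference is in the surjectivity half of ``flat $\Rightarrow$ $\eta_X$ iso''. The paper chooses a representative $x\in X(\bld m)$ of \emph{minimal} dimension and argues by contradiction: if $m>k$ it uses the single shift $d\in M$ that skips $m$, compares $X(\iota_m^{m+1})(x)$ with $X(d|_{\bld m})(x)$, and applies the pullback condition to the square with corners $\bld{m-1},\bld m,\bld m,\bld{m+1}$ to descend one step. Your argument is more direct: you pick $g\in M$ fixing $\bld m$ and sending $\{m+1,\dots,n\}$ entirely outside $\bld n$, so that the pullback of $\tilde g,\iota\colon\bld n\rightrightarrows\bld k$ is $\bld m$ on the nose, and a single application of the pullback axiom lands you in $X(\bld m)$. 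This avoids the minimality bookkeeping at the cost of a slightly more elaborate choice of injection. Both work; yours is arguably cleaner.

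For the converse direction your treatment is also a bit more explicit than the paper's (which just cites Propositions~\ref{prop:intersect supports} and~\ref{prop:algebraic consequences}), and in fact your injectivity argument for $\alpha_*\colon W_{\bld m}\to W_{\bld n}$ via a left inverse $h$ does not require $W$ to be tame, whereas the paper's appeal to Proposition~\ref{prop:algebraic consequences} does. Since only the tame case $W=X(\omega)$ is needed, this extra generality is harmless but not used.
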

\begin{proof}
  Parts (i) and (ii) are straightforward, and we omit their proofs.

  (iii) The composite of $\eta_X(\omega)\colon X(\omega) \to (X(\omega)_\bullet)(\omega)$
  with the adjunction counit $\epsilon_{X(\omega)}\colon (X(\omega)_\bullet)(\omega) \to X(\omega)$
  is the identity. Since the $M$-action on $X(\omega)$ is tame,
  the counit $\epsilon_{X(\omega)}$ is an isomorphism by (ii).
  So the morphism $\eta_X(\omega)$
  is an isomorphism.
  
  (iv) We suppose first that $\eta_X$ is an isomorphism.
  Then every morphism $\alpha \colon \bld{m} \to \bld{n}$ in $\cI$
  induces an injection $\alpha_*\colon W_{\bld{m}} \to W_{\bld{n}}$
  by Proposition~\ref{prop:algebraic consequences}.
  Moreover, Proposition~\ref{prop:intersect supports} shows that $W_\bullet$
  preserves pullbacks. So the $\cI$-set $W_\bullet$ is flat.

  For the converse we suppose that $X$ is flat.
  Then the maps $X(\iota_k^m)\colon X(\bld{k})\to X(\bld{m})$
  are injective, where $\iota_k^m\colon \bld{k}\to\bld{m}$ is the inclusion.
  So the canonical map $X(\bld{k})\to\colim_{\cN} X=X(\omega)$ is injective,
  hence so is its restriction $\eta_X(\bld{k})\colon X(\bld{k})\to X(\omega)_{\bld{k}}$.

  For surjectivity we consider any element of $X(\omega)_{\bld{k}}$,
  i.e., an element of $X(\omega)$ that is supported on $\bld{k}$.
  We choose a representative $x\in X(\bld{m})$ of minimal dimension,
  i.e., with $m\geq 0$ chosen as small as possible.
  We must show that $m\leq k$.
  We argue by contradiction and suppose that $m>k$.
  We let $d\in M$ be the injection defined by
  \[ d(i)\ = \
    \begin{cases}
      i & \text{ for $i< m$, and}\\
      i+1 & \text{ for $i\geq m$.}
    \end{cases}
  \]
  Then $d\,[x]=[x]$ because $d$ is the identity on $\bld{k}$
  and $[x]$ is supported on $\bld{k}$.
  This means that the elements
  \[ X(\iota_m^{m+1})(x)\ , \ X(d|_{\bld{m}})(x) \ \in \ X(\bld{m+1}) \]
  represent the same element in the colimit $X(\omega)$.
  Since the canonical map from $X(\bld{m+1})$ to $X(\omega)$ is injective,
  we conclude that  $X(\iota_m^{m+1})(x)=X(d|_{\bld{m}})(x)$.
  Since $X$ is flat, the following square is a pullback:
  \[\xymatrix@C=15mm{
      X(\bld{m-1})\ar[r]^-{X(\iota_{m-1}^m)}\ar[d]_{X(\iota_{m-1}^m)} &  X(\bld{m}) \ar[d]^{X(d|_{\bld{m}})}\\
      X(\bld{m})\ar[r]_-{X(\iota_m^{m+1})}  &       X(\bld{m+1})
    }  \]
  So there is an element $y\in X(\bld{m-1})$ such that $X(\iota_{m-1}^m)(y)=x$.
  This contradicts the minimality of $m$, so we have shown that $m\leq k$.
  
  Part (v) is a formal consequence of the other statements:
  part (iii) implies that the restricted functor $(-)_\bullet\colon \set^M_{\tm}\to \set^\cI$
  is fully faithful, and part (iv) identifies its essential image
  as the flat $\cI$-sets.
\end{proof}

\subsection*{Homotopy theory of flat \texorpdfstring{$\cI$}{I}-spaces and tame \texorpdfstring{$M$}{M}-spaces}

In Proposition \ref{prop:flat-replacement}
we have exhibited flat $\cI$-sets as a reflective subcategory
inside all $\cI$-sets, equivalent to the category of tame $M$-sets.
Restricting to {\em tame} $M$-spaces
and applying the relevant constructions in every simplicial degree provides
an adjoint functor pair:
\[\xymatrix{
    (-)(\omega)\colon \sset^\cI\  \ar@<.4ex>[r] &\  \sset^M_\tm \colon (-)_\bullet\ar@<.4ex>[l]
  }\] 
For an $\cI$-space $X$, we write
\[  X^{\flat}\  =\  (X(\omega))_{\bullet}  \]
for the composite endofunctor of $\sset^\cI$.
By Proposition \ref{prop:flat-replacement}, these constructions enjoy
the following properties:
\begin{cor}\phantomsection\label{cor:adjunction-properties}
\begin{enumerate}[\em (i)] 
\item The adjunction counit $\epsilon_W:(W_\bullet)(\omega)\to W$  is an isomorphism.
\item The adjunction unit $\eta_X \colon X \to X^{\flat}$ is
    an $\cN$-isomorphism.
  \item The adjunction unit
    $\eta_X\colon X \to X^{\flat}$ is an isomorphism
    if and only if $X$ is flat. 
  \item The restrictions of $(-)(\omega)$ and $(-)_\bullet$
    are equivalence of categories
  \[\xymatrix{
      (-)(\omega)\colon \sset^\cI_{\mathrm{flat}}\  \ar@<.4ex>[r] &\  \sset^M_{\tm} \colon (-)_\bullet\ar@<.4ex>[l]
    }\] 
  between the full subcategories of flat $\cI$-spaces and tame $M$-spaces.\qed
\end{enumerate}  
\end{cor}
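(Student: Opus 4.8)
The plan is to obtain all four statements by reducing to the set-level results of Proposition~\ref{prop:flat-replacement}, using that the functors $(-)(\omega)$, $(-)_\bullet$ and $(-)^\flat$, together with the natural transformations $\eta$ and $\epsilon$, are all computed one simplicial degree at a time from their counterparts for $\cI$-sets and $M$-sets.

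For (i), in simplicial degree $k$ the counit $\epsilon_W$ is the map $\epsilon_{W_k}\colon ((W_k)_\bullet)(\omega)\to W_k$ of Proposition~\ref{prop:flat-replacement}. Since $W$ is a tame $M$-space, each $W_k$ is a tame $M$-set, so $\epsilon_{W_k}$ is both injective and surjective by Proposition~\ref{prop:flat-replacement}(ii), hence bijective; a levelwise bijection of simplicial sets is an isomorphism, so $\epsilon_W$ is an isomorphism of $M$-spaces.

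For (ii), recall that a morphism $f$ of $\cI$-spaces is an $\cN$-isomorphism exactly when $f(\omega)$ is an isomorphism of simplicial sets, i.e.\ a bijection in each degree. In degree $k$ the map $\eta_X(\omega)$ equals $\eta_{X_k}(\omega)\colon X_k(\omega)\to (X_k(\omega)_\bullet)(\omega)$, which is an isomorphism of $M$-sets by Proposition~\ref{prop:flat-replacement}(iii); hence $\eta_X(\omega)$ is an isomorphism of simplicial sets and $\eta_X$ is an $\cN$-isomorphism. For (iii), $\eta_X$ is an isomorphism of $\cI$-spaces if and only if $\eta_{X_k}$ is an isomorphism of $\cI$-sets for every $k\geq 0$, which by Proposition~\ref{prop:flat-replacement}(iv) holds if and only if each $X_k$ is a flat $\cI$-set; by the flatness criterion recalled in Remark~\ref{rk:flat is flat}, namely \cite[Proposition~3.11]{Sagave-S_diagram}, this is equivalent to $X$ being a flat $\cI$-space.

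Finally (iv) is formal: by (i) the counit is always an isomorphism, so $(-)_\bullet$ is fully faithful, and by (iii) the unit $\eta_X$ is invertible precisely for flat $\cI$-spaces, so the restricted adjunction is an adjoint equivalence $\sset^\cI_{\mathrm{flat}}\simeq\sset^M_\tm$; equivalently, one applies Proposition~\ref{prop:flat-replacement}(v) in each simplicial degree and uses that a degreewise adjoint equivalence between categories of simplicial objects is again an adjoint equivalence. The only point needing care is the dictionary between flatness of an $\cI$-space and degreewise flatness of its $\cI$-sets of simplices, and the detection of $\cN$-isomorphisms via $(-)(\omega)$ — but neither is a genuine obstacle, since the first is \cite[Proposition~3.11]{Sagave-S_diagram} and the second is immediate from the definitions.
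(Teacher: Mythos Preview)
Your proof is correct and follows exactly the approach the paper intends: the corollary is stated with a \qed immediately after the sentence ``By Proposition~\ref{prop:flat-replacement}, these constructions enjoy the following properties,'' so the paper's argument is precisely the degreewise reduction to Proposition~\ref{prop:flat-replacement} that you have written out in detail. Your explicit invocation of Remark~\ref{rk:flat is flat} to translate between flatness of an $\cI$-space and degreewise flatness of its $\cI$-sets of simplices is the one nontrivial ingredient, and you handle it correctly.
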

\begin{rk}
Properties (ii) and (iii) in the corollary in particular say that
$X^\flat$ is a `flat replacement' of $X$, in the sense that
the adjunction unit $\eta_X\colon X\to X^\flat$
is an $\cN$-isomorphism, and hence an $\cI$-equivalence,
with a flat target.
This should be contrasted with  cofibrant replacement in the flat
model structure on $\cI$-spaces, which provides an $\cI$-equivalence
(even a level equivalence) {\em from}  a flat $\cI$-space {\em to} $X$.
\end{rk}

\begin{rk}\label{rem:alternative-pf-boxtimes-fully-homotopical}
  Corollary~\ref{cor:adjunction-properties} (ii) and the strong monoidality of
  $(-)(\omega)$ established in Proposition~\ref{prop:strong monoidal}
  also lead to an alternative proof of
  Theorem~\ref{thm:intro:boxtimes-fully-homotopical}: they imply that
  for every pair of $\cI$-spaces $X,Y$, there is a natural
  $\cN$-isomorphism $X \boxtimes Y \to X^{\flat} \boxtimes Y$ with
  $X^{\flat}$ a flat. The homotopy invariance of $\boxtimes$ then
  follows from~\cite[Proposition\,8.2]{Sagave-S_diagram} which implies
  that $X^{\flat}\boxtimes -$ preserves $\cI$-equivalences.
\end{rk}

A relatively formal consequence of this setup is that the absolute and
positive flat $\cI$-model structures on $\sset^{\cI}$
\cite[Proposition\,3.10]{Sagave-S_diagram} restrict to model
structures on the full subcategory $\sset^{\cI}_{\mathrm{flat}}$ of
flat $\cI$-spaces.  The {\em flat $\cI$-fibrations} are defined in
\cite[Definition\,3.9]{Sagave-S_diagram} as the morphisms of
$\cI$-spaces with the right lifting property against the class of flat
$\cI$-cofibrations that are also $\cI$-equivalences. In \cite[Section
6.11]{Sagave-S_diagram}, the flat $\cI$-fibrations are identified in
more explicit terms.

\begin{theorem}\phantomsection\label{thm:flat-I-on-flat}
  \begin{enumerate}[\em (i)]
  \item The classes of $\cI$-equivalences, monomorphisms and flat
    $\cI$-fibrations form the \emph{flat $\cI$-model structure} on the
    category $\sset^{\cI}_{\mathrm{flat}}$ of flat $\cI$-spaces.
  \item The classes of $\cI$-equivalences, monomorphisms that are also
    isomorphisms at~$\bld{0}$, and positive flat $\cI$-fibrations form
    the \emph{positive flat $\cI$-model structure} on the category
    $\sset^{\cI}_{\mathrm{flat}}$ of flat $\cI$-spaces.
  \item The flat and positive flat $\cI$-model structures
    on $\sset^{\cI}$ are cofibrantly generated, proper and simplicial.
  \item
    The inclusion $\sset^{\cI}_{\mathrm{flat}}\to \sset^{\cI}$
    is the right Quillen functor in a Quillen equivalence between the flat $\cI$-model structures.
\end{enumerate}
\end{theorem}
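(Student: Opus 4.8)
The plan is to transport the two flat $\cI$-model structures on $\sset^{\cI}$ along the reflective inclusion $\sset^{\cI}_{\mathrm{flat}} \hookrightarrow \sset^{\cI}$, using the flat replacement functor $X \mapsto X^{\flat}$ of Construction~\ref{con:tame-M-to-I} as a strict (idempotent) reflection. The key structural input is Corollary~\ref{cor:adjunction-properties}: the adjunction $((-)(\omega), (-)_\bullet)$ restricts to an equivalence between $\sset^{\cI}_{\mathrm{flat}}$ and $\sset^M_\tm$, the unit $\eta_X \colon X \to X^{\flat}$ is always an $\cN$-isomorphism (hence an $\cI$-equivalence by~\eqref{eq:XhcI-identification}), and $\eta_X$ is an isomorphism exactly when $X$ is flat. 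So $(-)^\flat$ is a well-pointed idempotent monad on $\sset^{\cI}$ whose category of algebras is $\sset^{\cI}_{\mathrm{flat}}$, and the inclusion has $(-)^\flat$ as a left adjoint.

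First I would verify that $\sset^{\cI}_{\mathrm{flat}}$ is complete and cocomplete: limits are created by the inclusion (a limit of flat $\cI$-sets is flat since monomorphisms and pullback-preservation are limit-stable), and colimits are computed by applying $(-)^\flat$ to the colimit in $\sset^{\cI}$. Next, for part (i), I would check the model category axioms directly with the stated three classes. The $\cI$-equivalences and monomorphisms are closed under the relevant operations and inherit the two-out-of-three and retract properties from $\sset^{\cI}$; the flat $\cI$-fibrations between flat objects are, by definition, the maps with the right lifting property against monomorphisms that are $\cI$-equivalences, and since both source and target are flat this coincides with the restriction of the flat $\cI$-fibrations of $\sset^{\cI}$. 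The one genuinely new point is factorization: given $f \colon X \to Y$ in $\sset^{\cI}_{\mathrm{flat}}$, factor it in $\sset^{\cI}$ as $X \rightarrowtail W \to Y$ (cofibration followed by trivial fibration, or trivial cofibration followed by fibration) and then apply $(-)^\flat$. Here Proposition~\ref{prop:monos are flat cofibrations} is the crucial ingredient: it guarantees that a monomorphism between flat $\cI$-sets is automatically a flat cofibration, and dimensionwise this says a monomorphism between flat $\cI$-spaces is a flat cofibration. Combined with the fact that $(-)^\flat$ is built levelwise from the reflection $(-)_\bullet \circ (-)(\omega)$, which preserves monomorphisms (Proposition~\ref{prop:flat-replacement}(ii)) and sends $\cI$-equivalences to $\cI$-equivalences (via the $\cN$-isomorphism unit), one checks that $X^\flat = X \to W^\flat \to Y^\flat = Y$ is again a (trivial) cofibration followed by a (trivial) fibration inside the flat subcategory. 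Part (ii) is the same argument with the positive variant, restricting monomorphisms to those that are isomorphisms at $\bld{0}$ and using the positive flat $\cI$-fibrations.

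For part (iii), cofibrant generation transfers because the generating (trivial) cofibrations of the flat and positive flat $\cI$-model structures on $\sset^{\cI}$ already have flat domains and codomains (they are built from latching data of representables, which are flat), so the same sets generate on $\sset^{\cI}_{\mathrm{flat}}$; the small object argument runs inside the subcategory since filtered colimits of flat $\cI$-sets along monomorphisms are flat. Properness: right properness follows because pullbacks in $\sset^{\cI}_{\mathrm{flat}}$ agree with those in $\sset^{\cI}$, which is right proper; left properness follows since pushouts of monomorphisms between flat objects stay flat (again by the latching analysis, or by noting $(-)^\flat$ preserves such pushouts). The simplicial structure is inherited: the cotensor $K \pitchfork X$ of a flat $\cI$-space by a simplicial set $K$ is flat, and the tensor is computed by flat replacement of the tensor in $\sset^{\cI}$; the pushout-product axiom then descends. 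Finally, part (iv): the inclusion $\sset^{\cI}_{\mathrm{flat}} \to \sset^{\cI}$ preserves fibrations and trivial fibrations essentially by definition of the model structures, so it is right Quillen with left adjoint $(-)^\flat$; it is a Quillen equivalence because the derived unit and counit are $\cI$-equivalences --- the unit $X \to X^\flat$ is an $\cN$-isomorphism hence an $\cI$-equivalence by Corollary~\ref{cor:adjunction-properties}(ii), and the counit is an isomorphism on flat objects by (iii) of that corollary.

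The main obstacle I anticipate is the factorization/lifting bookkeeping in part (i): one must confirm that applying $(-)^\flat$ to a factorization produced in $\sset^{\cI}$ does not destroy the trivial-fibration or fibration half. The trivial-cofibration and cofibration halves are handled cleanly by Proposition~\ref{prop:monos are flat cofibrations} (monomorphisms between flat objects are automatically flat cofibrations, so applying $(-)^\flat$ to a monomorphism keeps it a flat cofibration since the unit is a natural isomorphism on flat objects and $(-)_\bullet (-)( \omega)$ preserves monos). For the fibration halves, the point is that a flat $\cI$-fibration $p \colon W \to Y$ with flat target need not have flat source, so one genuinely needs to factor $W^\flat \to Y$ again or, better, arrange from the start that the middle object of the $\sset^{\cI}$-factorization is already flat --- which is possible precisely because the generating cofibrations have flat domains, so the small-object-argument output is a transfinite composition of pushouts of maps between flat objects along maps between flat objects, hence flat. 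Making this last observation precise, and matching it against the explicit description of flat $\cI$-fibrations in~\cite[\S 6.11]{Sagave-S_diagram}, is where the real work lies; everything else is a formal consequence of the reflective-subcategory picture established in Proposition~\ref{prop:flat-replacement} and Corollary~\ref{cor:adjunction-properties}.
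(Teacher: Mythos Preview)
Your approach is essentially the paper's: transport the model structure along the reflective inclusion using $(-)^\flat$, invoke Proposition~\ref{prop:monos are flat cofibrations} to identify the cofibrations between flat objects as the monomorphisms, observe that the generating (acyclic) cofibrations of the flat model structure on $\sset^{\cI}$ already lie in $\sset^{\cI}_{\mathrm{flat}}$ so that factorizations produced by the small object argument have flat middle term, and conclude the Quillen equivalence from the unit $\eta_X$ being an $\cN$-isomorphism. The paper's proof is terser and skips your intermediate worry about applying $(-)^\flat$ to an arbitrary factorization; it goes straight to the observation you reach at the end, namely that the small object argument never leaves the flat subcategory, so no post-hoc flat replacement of the middle object is needed.
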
 
\begin{proof}
  (i)
  Being a reflective subcategory of $\sset^{\cI}$, the category $\sset^{\cI}_{\mathrm{flat}}$
  is complete with limits created by the inclusion
  and colimits created by applying $(-)^{\flat}$
  to the corresponding colimits in $\sset^{\cI}$ (see e.g.\cite[Proposition 4.5.15]{riehl_context}). 
  Factorizations and the 2-out-of-3, retract and lifting properties are inherited from  $\sset^{\cI}$.
  The same holds for the simplicial structure, properness and the generating (acyclic) cofibrations
  claimed in part (iii).
  Proposition \ref{prop:monos are flat cofibrations} shows that every monomorphism
  between flat $\cI$-spaces is already a flat cofibration.
  So the restriction of flat cofibrations to $\sset^\cI_{\mathrm{flat}}$
  indeed yields the class of monomorphisms.
  The proof of part (ii) is very similar to that of part (i), and we omit the details. 
   
  (iv) The Quillen equivalence statement is a direct consequence
  of the fact that the adjunction unit $X\to X^\flat$ is always an $\cN$-isomorphism,
  and hence an $\cI$-equivalence, by Proposition~\ref{prop:flat-replacement} (i).  
\end{proof}

Analogously, the absolute and positive projective $\cI$-model structures 
and the level model structures studied in~\cite[\S~3]{Sagave-S_diagram}
carry over to  $\sset^{\cI}_{\mathrm{flat}}$.

\begin{cor}\phantomsection\label{cor:model-on-tame-M-spaces}
  \begin{enumerate}[\em (i)]
  \item 
    The classes of $M$-equivalences and monomorphisms 
    are part of a cofibrantly generated, proper and simplicial model structure
    on the category $\sset^{M}_{\mathrm{tame}}$ of tame $M$-spaces.
  \item 
    The classes of $M$-equivalences and monomorphisms that are also isomorphisms on $M$-fixed points
    are part of a cofibrantly generated, proper and simplicial model structure
    on the category $\sset^{M}_{\mathrm{tame}}$ of tame $M$-spaces.    
\end{enumerate}
\end{cor}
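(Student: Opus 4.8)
The plan is to \textbf{transport} the two model structures of Theorem~\ref{thm:flat-I-on-flat} across the adjoint equivalence
\[
(-)(\omega)\colon \sset^{\cI}_{\mathrm{flat}} \;\rightleftarrows\; \sset^{M}_{\tm} \;:\,(-)_\bullet
\]
of Corollary~\ref{cor:adjunction-properties}~(iv). An equivalence of categories preserves and reflects all limits, colimits, retracts and lifting squares, so it carries any model structure on one side to a model structure on the other, together with the properties of being cofibrantly generated and (left and right) proper; it also carries over the simplicial structure provided the equivalence is a simplicial functor. The functor $(-)(\omega)=\colim_{\cN}$ is simplicial because on both sides the tensor over $\sset$ is computed degreewise as $X\otimes K=X(-)\times K$ (with trivial $M$-action on $K$), and, writing $A\times K_p=\coprod_{K_p}A$ and using that a sequential colimit of sets commutes with coproducts, $\colim_{\cN}(X\otimes K)\iso(\colim_{\cN}X)\times K$ with the $M$-action of Construction~\ref{con:M action} matching the diagonal action; the cotensor case follows by adjunction. (One could also invoke the strong symmetric monoidality from Proposition~\ref{prop:strong monoidal}, but the degreewise description is all that is needed here.) Granting this, the corollary reduces to identifying the three distinguished classes on the $\sset^{M}_{\tm}$-side.

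The weak equivalences are handled by Proposition~\ref{prop:omega translates}~(ii): a morphism $f$ of flat $\cI$-spaces is an $\cI$-equivalence if and only if $f(\omega)$ is an $M$-equivalence, so the $\cI$-equivalences are carried exactly to the $M$-equivalences. For the cofibrations one checks that $(-)(\omega)$ preserves and reflects monomorphisms between flat $\cI$-spaces. Working degreewise in the simplicial direction reduces this to flat $\cI$-\emph{sets}, where one uses the fact (established inside the proof of Proposition~\ref{prop:flat-replacement}~(iv)) that for a flat $\cI$-set $X$ each canonical map $X(\bld{k})\to X(\omega)$ is injective: if $f\colon X\to Y$ is a monomorphism then $f(\omega)$ is the filtered union of the injections $f(\bld{k})$ and hence injective, while if $f(\omega)$ is injective then $X(\bld{k})\to Y(\bld{k})\to Y(\omega)$ is injective and $Y(\bld{k})\to Y(\omega)$ is injective, forcing $f(\bld{k})$ to be injective. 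Since $(-)_\bullet$ sends a monomorphism $g$ of tame $M$-spaces to the levelwise restrictions $g_\bullet(\bld{m})\colon V_{\bld{m}}\to W_{\bld{m}}$, which are again injective, the class of monomorphisms is matched in both directions. For the positive variant one additionally notes that for a flat $\cI$-space $X$ the set $X(\bld{0})$ is carried isomorphically onto $X(\omega)_{\bld{0}}=\{x : \supp(x)=\emptyset\}=X(\omega)^{M}$, so that ``monomorphism which is an isomorphism at $\bld{0}$'' corresponds precisely to ``monomorphism which is an isomorphism on $M$-fixed points''. Once cofibrations and weak equivalences are matched, the fibrations (being characterised by the same lifting property on both sides) are matched automatically, and the two transported model structures are exactly those asserted in~(i) and~(ii).

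In short, everything follows formally from Theorem~\ref{thm:flat-I-on-flat} and the equivalence of Corollary~\ref{cor:adjunction-properties}~(iv); the only steps requiring genuine attention are the verification that $\colim_{\cN}$ is a simplicial functor on flat $\cI$-spaces (needed to transfer the simplicial structure, in particular axiom SM7) and the bookkeeping that identifies monomorphisms of tame $M$-spaces with monomorphisms of flat $\cI$-spaces under the equivalence. Neither is hard, but the simplicial compatibility is the point I would write out most carefully.
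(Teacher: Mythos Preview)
Your proposal is correct and follows exactly the same approach as the paper: transport the two model structures of Theorem~\ref{thm:flat-I-on-flat} along the equivalence of categories from Corollary~\ref{cor:adjunction-properties}~(iv), and invoke Proposition~\ref{prop:omega translates}~(ii) to identify the weak equivalences as the $M$-equivalences. The paper's proof is considerably terser and leaves implicit the matching of cofibrations and the simplicial compatibility that you spell out, but the strategy is identical.
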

\begin{proof}
  For parts (i) and (ii) we transport the two model structures of
  Theorem~\ref{thm:flat-I-on-flat} along the equivalence of categories
  between flat $\cI$-spaces and tame $M$-spaces
  provided by Proposition \ref{prop:flat-replacement} (v).
  We note that
  Proposition~\ref{prop:omega translates} (ii) identifies the weak equivalences
  as the $M$-equivalences.
\end{proof}

\begin{rk}
  The results we proved about the homotopy theory of tame $M$-spaces
  imply that the two model structures of Corollary \ref{cor:model-on-tame-M-spaces}
  are Quillen equivalent to the Kan--Quillen model structure on the category of simplicial sets.
  For easier reference, we spell out an explicit chain of two Quillen equivalences.
  Proposition 6.23 of \cite{Sagave-S_diagram}
   shows that the colimit functor $\colim_\cI \colon \sset^\cI\to \sset$
   is a left Quillen equivalence for the projective model $\cI$-model structure
   on the category of $\cI$-spaces of \cite[Propositions\,3.2]{Sagave-S_diagram}.  
   The projective and flat $\cI$-model structures on $\sset^\cI$
   have the same weak equivalences and nested classes of cofibrations, so they are Quillen equivalent.
   The flat $\cI$-model structures on $\sset^\cI$ and on its full subcategory
   $\sset^\cI_{\text{flat}}$ are Quillen equivalent by Theorem \ref{thm:flat-I-on-flat} (iv).
   And the model structure on $\sset^M_{\tm}$ matches the model structure on 
   $\sset^\cI_{\text{flat}}$, by design.
   If we combine all this, we arrive at a chain of two Quillen equivalences,
   with left adjoints depicted on top:
   \[\xymatrix@C=15mm{
       \sset\  \ar@<-.4ex>[r]_-{\text{const}}
       &\ \sset^{\cI,\text{proj}}\  \ar@<-.4ex>[l]_-{\colim_\cI}
      \ar@<.4ex>[r]^-{(-)(\omega)}
      &\  \sset^M_{\tm} \ar@<.4ex>[l]^-{(-)_\bullet}
    }\] 
  The middle term has the projective $\cI$-model structure, and simplicial sets carry
  the Kan--Quillen model structure.
\end{rk}

The following theorem states that the positive model structure from
Corollary~\ref{cor:model-on-tame-M-spaces} lifts to commutative
monoids.

\begin{theorem}\label{thm:model-str-comm-boxtimes_M}
  The category $\text{\em Com}(\sset^M_{\tm},\boxtimes)$ of commutative $\boxtimes$-monoids
  in tame $M$-spaces admits a positive model structure
  with weak equivalences the $M$-equivalences.
  It is Quillen equivalent to the category of $E_{\infty}$ spaces. 
\end{theorem}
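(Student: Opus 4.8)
The plan is to obtain the positive model structure on $\text{Com}(\sset^M_{\tm},\boxtimes)$ by transporting the corresponding positive flat model structure on commutative $\boxtimes$-monoids in $\cI$-spaces across the symmetric monoidal equivalence of categories established in this paper. First I would invoke Corollary~\ref{cor:adjunction-properties}~(iv) together with Proposition~\ref{prop:strong monoidal}: the functor $(-)(\omega)\colon \sset^\cI_{\mathrm{flat}}\to\sset^M_{\tm}$ is an equivalence of categories, and it is strong symmetric monoidal. Hence it induces an equivalence $\text{Com}(\sset^\cI_{\mathrm{flat}},\boxtimes)\xrightarrow{\ \simeq\ }\text{Com}(\sset^M_{\tm},\boxtimes)$ on categories of commutative monoids. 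Since $(-)(\omega)$ sends $\cI$-equivalences between flat $\cI$-spaces to $M$-equivalences (Proposition~\ref{prop:omega translates}~(ii)), it suffices to produce the desired positive model structure on $\text{Com}(\sset^\cI_{\mathrm{flat}},\boxtimes)$ and transport it along this equivalence.

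Next I would recall the known lifting result for commutative monoids in $\cI$-spaces. By \cite[Proposition~3.15, Proposition~3.16]{Sagave-S_diagram} (or the corresponding statements in \cite{Nikolaus-S_presentably}), the positive flat $\cI$-model structure on $\sset^\cI$ lifts along the free-forgetful adjunction to a cofibrantly generated model structure on $\text{Com}(\sset^\cI,\boxtimes)$ in which the weak equivalences and fibrations are those of the underlying positive flat $\cI$-model structure, and this model category is Quillen equivalent to the category of $E_\infty$ spaces (via the operadic comparison, using that the $\cI$-Barratt--Eccles-type operad or commutativity operad is $\Sigma$-free up to the relevant homotopical input). The essential point making the positive variant work is that a cofibrant commutative $\boxtimes$-monoid has underlying object flat, so that Theorem~\ref{thm:flat-I-on-flat}~(i) applies and the whole lifted structure restricts to the reflective subcategory $\sset^\cI_{\mathrm{flat}}$. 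Concretely, $\text{Com}(\sset^\cI_{\mathrm{flat}},\boxtimes)$ is a reflective subcategory of $\text{Com}(\sset^\cI,\boxtimes)$ (with reflector $(-)^\flat$ applied to commutative monoids, which is again a commutative monoid since $(-)(\omega)$ and $(-)_\bullet$ are lax monoidal), and the generating cofibrations and acyclic cofibrations of the lifted structure already have flat source and target, so the model structure restricts verbatim, exactly as in the proof of Theorem~\ref{thm:flat-I-on-flat}~(i).

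Then I would transport: given the equivalence of categories $\text{Com}(\sset^\cI_{\mathrm{flat}},\boxtimes)\simeq\text{Com}(\sset^M_{\tm},\boxtimes)$, declare a morphism in $\text{Com}(\sset^M_{\tm},\boxtimes)$ to be a weak equivalence, cofibration, or fibration precisely when its preimage is. Since transport of a model structure along an equivalence of categories is automatic, this yields a model structure on $\text{Com}(\sset^M_{\tm},\boxtimes)$; by Proposition~\ref{prop:omega translates}~(ii) its weak equivalences are the $M$-equivalences, and I would check that its cofibrations are the positive ones, i.e.\ those monomorphisms that are isomorphisms at the $M$-fixed points (equivalently at $\bld 0$ on the $\cI$-side), matching the description in Corollary~\ref{cor:model-on-tame-M-spaces}~(ii). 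The Quillen equivalence with $E_\infty$ spaces is then inherited: compose the Quillen equivalence between $\text{Com}(\sset^\cI,\boxtimes)$ and $E_\infty$ spaces with the Quillen equivalence (an equivalence of categories, hence trivially Quillen) between $\text{Com}(\sset^\cI_{\mathrm{flat}},\boxtimes)$ and $\text{Com}(\sset^M_{\tm},\boxtimes)$, using Theorem~\ref{thm:flat-I-on-flat}~(iv) at the commutative-monoid level to pass between the flat model structures on $\sset^\cI$ and on $\sset^\cI_{\mathrm{flat}}$.

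The main obstacle I anticipate is the verification that the lifted model structure on $\text{Com}(\sset^\cI,\boxtimes)$ genuinely restricts to the reflective subcategory of flat $\cI$-spaces, i.e.\ that the small-object-argument factorizations stay inside $\text{Com}(\sset^\cI_{\mathrm{flat}},\boxtimes)$ and that pushouts of generating (acyclic) cofibrations of commutative monoids preserve flatness. This is the commutative-monoid analog of Proposition~\ref{prop:monos are flat cofibrations}, and it requires knowing that applying the free commutative monoid functor to a flat $\cI$-space is flat and that the filtration of a pushout $R\to R[X]$ of commutative $\boxtimes$-monoids along a flat cofibration $X\to Y$ is built from flat pieces (symmetric powers of flat $\cI$-spaces relative to a flat base), which is essentially the content that makes the positive flat model structure well-behaved in \cite{Sagave-S_diagram, Nikolaus-S_presentably}. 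Once that flatness bookkeeping is in place, the rest is formal.
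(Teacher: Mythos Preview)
Your proposal is correct and follows essentially the same route as the paper: restrict the positive flat model structure on $\text{Com}(\sset^\cI,\boxtimes)$ from \cite[Proposition~3.15]{Sagave-S_diagram} to commutative monoids with flat underlying $\cI$-space and transport along the strong symmetric monoidal equivalence $(-)(\omega)$, with the only non-formal step being exactly the one you flag---that factorizations stay in the flat subcategory---which the paper handles by the slightly sharper statement that positive cofibrations of commutative $\cI$-space monoids with flat domain are already absolute flat cofibrations of $\cI$-spaces (invoking \cite[Lemma~12.17]{Sagave-S_diagram}). One small correction: the cofibrations in the lifted model structure on commutative monoids are \emph{not} the monomorphisms that are isomorphisms on $M$-fixed points---that is the description of the cofibrations in the underlying category $\sset^M_{\tm}$ from Corollary~\ref{cor:model-on-tame-M-spaces}~(ii), whereas the commutative-monoid cofibrations are the retracts of relative cell complexes built from the free-commutative-monoid images of those maps.
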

Since the functor
$(-)(\omega)\colon \sset^{\cI}_{\mathrm{flat}} \to \sset^M_{\tm}$ is a
strong symmetric monoidal equivalence of categories, it induces an
equivalence on the categories of commutative monoids. Thus the theorem
also gives a \emph{positive flat $\cI$-model structure} on the
category of commutative monoids in $(\sset^\cI,\boxtimes)$ whose
underlying $\cI$-spaces are flat.
\begin{proof}[Proof of Theorem~\ref{thm:model-str-comm-boxtimes_M}]
  The category $\text{Com}(\sset^M_{\tm},\boxtimes)$ is complete because
  the underlying category $\sset^M_{\tm}$ is,
  and limits in commutative monoids are created in the underlying category.
  Using~\cite[Proposition 2.3.5]{Rezk_operads}, cocompleteness follows because $\sset^M_{\tm}$
  is cocomplete by Lemma~\ref{lem:pushouts-seq-colimits-M-sets}
  (or the argument in the proof of Corollary~\ref{cor:model-on-tame-M-spaces})
  and $\boxtimes$ preserves colimits in each variable by Corollary~\ref{cor:boxtimes-preserves-colimits}.

  The model structure now arises by restricting the flat $\cI$-model
  structure on commutative $\cI$-space monoids from~\cite[Proposition
  3.15(i)]{Sagave-S_diagram} to the category of underlying flat
  commutative $\cI$-space monoids and transporting it along the
  equivalence of categories to $\text{Com}(\sset^M_{\tm},\boxtimes)$. The
  only non-obvious part is to get the factorizations. For this, it is
  sufficient to check that positive cofibrations in commutative $\cI$-space
  monoids with underlying flat domain are absolute flat cofibrations of
  $\cI$-spaces. This is a slightly stronger statement than
  \cite[Proposition 12.5]{Sagave-S_diagram} and follows from the
  argument in the proof of~\cite[Lemma 12.17]{Sagave-S_diagram}.

  For the Quillen equivalence statement, we first note
  that since $(-)(\omega)$ is strong symmetric monoidal by Proposition~\ref{prop:strong monoidal},
  its right adjoint $(-)_{\bullet}$ is lax symmetric monoidal,
  and so the composite $(-)^{\flat}$ is also lax symmetric monoidal.
  Hence $(-)^{\flat}$ also induces a left adjoint
  $\text{Com}(\sset^{\cI}) \to \text{Com} (\sset^{\cI}_{\mathrm{flat}})$ with right adjoint the inclusion.
  This adjunction is a Quillen equivalence with respect to the positive flat model structure
  since the underlying $\cI$-spaces of cofibrant commutative $\cI$-space monoids are flat.
  The claim follows since  $\text{Com}(\sset^{\cI})$
  is Quillen equivalent to $E_{\infty}$ spaces
  by~\cite[Theorem 3.6 and Proposition 9.8(ii)]{Sagave-S_diagram}. 
\end{proof}

\section{Presentably symmetric monoidal \texorpdfstring{$\infty$}{infinity}-categories}\label{sec:presentably-symmetric-monoidal}

The aim of this section is to prove Theorem~\ref{thm:presentably-sym-monoidal-intro} from the introduction.
The strategy of proof is to generalize the alternative approach
to Theorem~\ref{thm:intro:boxtimes-fully-homotopical}
outlined in Remark~\ref{rem:alternative-pf-boxtimes-fully-homotopical}.

We let $N$ be a commutative $\cI$-space monoid.
We write $\sset^{\cI}/N$ for the category of $\cI$-spaces
augmented over $N$. This over category $\sset^{\cI}/N$
inherits a symmetric monoidal convolution product given by 
\[(X \to N) \boxtimes (Y \to N) = (X \boxtimes Y \to N \boxtimes N \to N)\]
where the last map is the multiplication of $N$.
Since $(-)(\omega) \colon \sset^{\cI} \to \sset^M_{\tm}$
is strong symmetric monoidal by Proposition~\ref{prop:strong monoidal},
it induces a strong symmetric monoidal functor $\sset^{\cI}/N \to\sset^M_{\tm}/N(\omega)$.

\begin{cor}\label{cor:boxtimes-preserves-N-isos}
For every object $X\to N$ in $\sset^{\cI}/N$, the endofunctor $(X\to N)\boxtimes -$
of $\sset^{\cI}/N$ preserves $\cN$-isomorphisms. 
\end{cor}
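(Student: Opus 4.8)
The strategy is to reduce the statement to the corresponding fact about tame $M$-spaces via the strong symmetric monoidal functor $(-)(\omega)$. First I would observe that a morphism $f\colon (X\to N)\to (Y\to N)$ in $\sset^{\cI}/N$ is an $\cN$-isomorphism precisely when the underlying map $X\to Y$ of $\cI$-spaces is an $\cN$-isomorphism, i.e.\ when $f(\omega)\colon X(\omega)\to Y(\omega)$ is an isomorphism of $M$-spaces. This is because the forgetful functor $\sset^{\cI}/N\to \sset^{\cI}$ detects and creates $\cN$-isomorphisms (applying $\colim_{\cN}$ commutes with the forgetful functor from the over category), so the notion of $\cN$-isomorphism in the over category is just the one on underlying $\cI$-spaces.

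Next I would use that $(-)(\omega)$ is strong symmetric monoidal (Proposition~\ref{prop:strong monoidal}), so it carries the convolution product on $\sset^{\cI}/N$ to the box product of tame $M$-spaces over $N(\omega)$: there is a natural isomorphism $((X\to N)\boxtimes(Y\to N))(\omega)\iso (X(\omega)\to N(\omega))\boxtimes(Y(\omega)\to N(\omega))$, where the right-hand side uses the box product of $M$-spaces and the multiplication of $N(\omega)$. Under this identification, the functor $(X\to N)\boxtimes -$ corresponds, after applying $(-)(\omega)$, to the functor $X(\omega)\boxtimes -$ on tame $M$-spaces over $N(\omega)$ (with the map to $N(\omega)$ remembered). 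Since forgetting the map to $N(\omega)$ sends this to the plain box product $X(\omega)\boxtimes -$ on $\sset^M_{\tm}$, and since an isomorphism in $\sset^M_{\tm}/N(\omega)$ is just an isomorphism of underlying tame $M$-spaces, it suffices to know that $X(\omega)\boxtimes -$ preserves isomorphisms of tame $M$-spaces — which is trivially true, as any functor preserves isomorphisms.

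Putting this together: given an $\cN$-isomorphism $f$ in $\sset^{\cI}/N$, the map $f(\omega)$ is an isomorphism of tame $M$-spaces over $N(\omega)$; applying $X(\omega)\boxtimes -$ yields an isomorphism (functors preserve isomorphisms); transporting back through the strong monoidal isomorphism of Proposition~\ref{prop:strong monoidal} shows that $((X\to N)\boxtimes f)(\omega)$ is an isomorphism of $M$-spaces, hence $(X\to N)\boxtimes f$ is an $\cN$-isomorphism in $\sset^{\cI}/N$. The only genuine content is the bookkeeping of the over-category structure and checking that $(-)(\omega)$ is compatible with the twisted convolution product on $\sset^{\cI}/N$; there is no real obstacle, since $\cN$-isomorphisms are far more rigid than $\cI$-equivalences and no homotopical input (no analog of Theorem~\ref{thm:box invariance}) is needed here. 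One could also give a direct argument: $(X\to N)\boxtimes -$ preserves all colimits and $\cN$-isomorphisms are detected after applying the colimit-preserving functor $(-)(\omega)$, which commutes with $\boxtimes$ by Proposition~\ref{prop:strong monoidal}.
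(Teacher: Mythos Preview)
Your proposal is correct and follows essentially the same approach as the paper: both reduce to the strong monoidality of $(-)(\omega)$ from Proposition~\ref{prop:strong monoidal}, so that $((X\to N)\boxtimes f)(\omega)\iso X(\omega)\boxtimes f(\omega)$, which is an isomorphism because $f(\omega)$ is. The paper's proof is just a one-sentence version of what you wrote out in detail.
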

\begin{proof}
  The map obtained by applying $(-)(\omega)$ to the product of $X \to N$
  with an $\cN$-isomorphism $Y \to Y'$ over $N$
  is isomorphic to $X(\omega) \boxtimes (Y(\omega)\to Y'(\omega))$. 
\end{proof}

\subsection*{Contravariant model structures}

If $S$ is a simplicial set, the over category $\sset/S$
admits a \emph{contravariant model structure} \cite[\S 2.1.4]{Lurie_HTT}.
It is characterized by the property that its cofibrations are the monomorphisms
and its fibrant objects are the morphisms $K \to S$
with the right lifting property against $\{ \Lambda^{n}_i \subseteq \Delta^n\, | \, 0 < i \leq n\}$.

We shall now consider $\cI$-diagrams in $\sset/S$,
and again call a morphism in  $(\sset/S)^{\cI}$ an $\cN$-isomorphism
if it induces an isomorphism when passing to the colimit of the underlying $\cN$-diagram.
Moreover, we say that a map is a \emph{contravariant $\cI$-equivalence}
if the homotopy colimit over $\cI$ formed with respect the contravariant model structure
sends it to a contravariant weak equivalence in $\sset/S$.
Since the covariant model structure is simplicial by the dual of \cite[Proposition\,2.1.4.8]{Lurie_HTT},
one may model this homotopy colimit by implementing the Bousfield--Kan formula.

\begin{lemma}\label{lem:N-isos-I-equivalences-constant-base}
  The $\cN$-isomorphisms in $(\sset/S)^{\cI}$ are contravariant
  $\cI$-equivalences. 
\end{lemma}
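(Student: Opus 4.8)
The plan is to show that an $\cN$-isomorphism $f\colon X\to Y$ in $(\sset/S)^{\cI}$ becomes a contravariant weak equivalence after applying the Bousfield--Kan homotopy colimit over $\cI$. The key observation is that $\colim_{\cN}$ is already homotopical for sequential colimits, so the categorical colimit and homotopy colimit over $\cN$ agree up to weak equivalence; this was used repeatedly in the $\cI$-space discussion. First I would compare the homotopy colimit over $\cI$ with the homotopy colimit over $\cN$. Since the inclusion $\cN\to\cI$ is not homotopy cofinal, this is not literally a comparison of the two homotopy colimits, but the point is that for a diagram $Z\colon\cI\to\sset/S$, the map $\hocolim_{\cN} Z|_{\cN}\to\hocolim_{\cI} Z$ need not be an equivalence in general --- instead the right statement is that an $\cN$-isomorphism is detected after $\hocolim_{\cN}$, and we must promote this to a statement after $\hocolim_{\cI}$.

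The cleanest route is to factor the claim through the projective (contravariant) model structure on $(\sset/S)^{\cI}$ or through the chain of equivalences analogous to \eqref{eq:XhcI-identification}. Concretely, I would argue: the $\cN$-isomorphism hypothesis says $f$ induces an isomorphism $\colim_{\cN} X\to\colim_{\cN} Y$. Taking a cofibrant replacement of $X$ and $Y$ in the projective contravariant $\cI$-model structure (so that objectwise these are contravariant-fibrant replacements assembled coherently), the induced map on $\colim_{\cN}$ of the \emph{cofibrant replacements} computes $\hocolim_{\cN}$, and since sequential colimits of acyclic fibrations in the contravariant model structure are again acyclic fibrations (by a lifting argument, exactly as in the remark following the definition of $\cN$-equivalences), $\colim_{\cN}$ sends projective-cofibrant replacements to contravariant weak equivalences; hence $\hocolim_{\cN}$ of $f$ is a contravariant weak equivalence. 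Then I would invoke the dual of \cite[Chapter\,XI, 9.2]{BK} together with the homotopy cofinality of $\cN\to\cI/\omega$ to pass between $\hocolim_{\cN}$ and $\hocolim_{\cI}$, or more directly use that the functor in question factors as in Construction~\ref{con:M action} through tame $M$-spaces and that $(-)(\omega)=\colim_{\cN}$ preserves $\cI$-equivalences.

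Alternatively, and perhaps more in the spirit of the rest of the paper, I would prove this by the following short argument: every $\cN$-isomorphism $f\colon X\to Y$ in $(\sset/S)^{\cI}$ is in particular a levelwise map whose colimit over $\cN$ is an isomorphism; applying the natural transformation $\eta$ of Construction~\ref{con:tame-M-to-I} (in the over-category setting) gives a commuting square relating $f$ to $f^{\flat}\colon X^{\flat}\to Y^{\flat}$, where $X^{\flat}$ is flat and $\eta_X,\eta_Y$ are $\cN$-isomorphisms. Flatness makes $X^{\flat}$ and $Y^{\flat}$ projective-cofibrant (using Proposition~\ref{prop:monos are flat cofibrations}), so $\hocolim_{\cI}$ of $f^{\flat}$ agrees with $\colim_{\cI}$ of $f^{\flat}$, which is a contravariant weak equivalence because $f^{\flat}(\omega)$ is an isomorphism. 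Then a comparison of homotopy colimits (the $\eta$'s are $\cN$-isomorphisms hence $\cI$-equivalences, again by the flat-cofibrant-replacement comparison) finishes the argument.

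The main obstacle I anticipate is the comparison between the homotopy colimit over $\cN$ and the homotopy colimit over $\cI$ in the relative setting $\sset/S$ with the contravariant model structure: one must check that the Bousfield--Kan homotopy colimit in this simplicial model category behaves as expected, i.e., that it is invariant under objectwise contravariant weak equivalences and that cofinality-type arguments apply. Since the contravariant model structure is simplicial (dual of \cite[Proposition\,2.1.4.8]{Lurie_HTT}), the Bousfield--Kan formula does compute the homotopy colimit, and the fact that sequential colimits of contravariant acyclic fibrations are acyclic fibrations (the right lifting property against $\Lambda^n_i\subseteq\Delta^n$ for $0<i\leq n$ is preserved under sequential colimits since the $\Lambda^n_i$ are finite) is what makes $\colim_{\cN}$ homotopical. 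Once that point is pinned down, the rest is a routine assembly of the pieces already established in Sections~\ref{sec:I-spaces} and~\ref{sec:flat-I}.
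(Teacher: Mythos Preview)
Your first approach is essentially the paper's: reduce $\hocolim_{\cI}$ to $\hocolim_{\cN}$ via the chain of equivalences analogous to~\eqref{eq:XhcI-identification}, then show that $\colim_{\cN}$ is homotopical for the contravariant model structure by a factorization argument, using that sequential colimits preserve contravariant acyclic fibrations. Two corrections, though. First, the contravariant acyclic fibrations are \emph{not} characterized by the right lifting property against the horns $\Lambda^n_i\subseteq\Delta^n$ for $0<i\leq n$; those detect the fibrant objects (right fibrations). The acyclic fibrations in the contravariant model structure on $\sset/S$ are exactly the acyclic Kan fibrations (forgetting the augmentation to $S$), so the relevant lifting is against $\partial\Delta^n\subseteq\Delta^n$ --- your finiteness argument then still applies verbatim. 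Second, your parenthetical that projective cofibrant replacement gives ``objectwise contravariant-fibrant replacements assembled coherently'' is simply false; projective cofibrant replacement has no reason to produce objectwise fibrant objects. The paper sidesteps this confusion by working directly with $\cN$-diagrams: factor a levelwise contravariant weak equivalence of $\cN$-diagrams as a projective acyclic cofibration followed by a levelwise acyclic fibration, and apply $\colim_{\cN}$ to each factor separately.

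Your alternative approach via $(-)^{\flat}$ does not work as stated. Flat $\cI$-spaces are cofibrant in the \emph{flat} model structure, not the projective one; Proposition~\ref{prop:monos are flat cofibrations} produces flat cofibrations, not projective cofibrations, so you cannot conclude that $\hocolim_{\cI}$ agrees with $\colim_{\cI}$ on $X^{\flat}$ and $Y^{\flat}$ from flatness alone. More seriously, the final step is circular: you invoke ``the $\eta$'s are $\cN$-isomorphisms hence $\cI$-equivalences'', but that implication, in the over category $(\sset/S)^{\cI}$ equipped with the contravariant model structure, is precisely the lemma you are trying to prove.
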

\begin{proof} Implementing the first two equivalences
  in~\eqref{eq:XhcI-identification} for the contravariant model
  structure shows that $\hocolim_{\cN}$-equivalences are
  $\cI$-equivalences. Thus it is sufficient to verify that the
  canonical map $\hocolim_{\cN} X \to \colim_{\cN}X$ is a
  contravariant weak equivalence. For this it is in turn sufficient to
  show that if $f\colon X \to Y$ is a map of $\cN$-diagrams in
  $\sset/S$ with each $f(\mathbf m)$ a contravariant weak equivalence,
  then $\colim_{\cN}f$ is a contravariant weak equivalence. To see
  this, we factor $f$ in the projective level model structure induced
  by the contravariant model structure as an acyclic cofibration $h$
  followed by an acyclic fibration $g$. Since $\colim_{\cN}$ is left
  Quillen with respect to the projective level model structure,
  $\colim_{\cN}h$ is a contravariant acyclic cofibration. Since the
  contravariant cofibrations coincide with the cofibrations of the
  over category model structure induced by the Kan model structure on
  $\sset$, the acyclic fibrations in the contravariant model structure
  are the maps that are acyclic Kan fibrations when forgetting the
  projection to $S$. Since acyclic Kan fibrations are characterized by
  having the right lifting property with respect to the set
  $\{ \partial \Delta^n \subseteq \Delta^n\, | \, n \geq 0\}$ and
  $\partial \Delta^n$ and $\Delta^n$ are finite, it follows that
  $\colim_{\cN}g$ is an acyclic Kan fibration when forgetting the
  projection to~$S$. This shows that $\colim_{\cN}f$ is a
  contravariant weak equivalence because it is the composite of a
  contravariant acyclic cofibration $\colim_{\cN}h$ and an acyclic Kan
  fibration $\colim_{\cN}g$.
\end{proof}

Now we let $Z$ be an $\cI$-diagram of simplicial sets,
and we consider the over category $\sset^{\cI}/Z$.
This category admits a \emph{positive contravariant $\cI$-model structure}
introduced in~\cite[Proposition\,3.10]{Nikolaus-S_presentably}.
It is defined as a left Bousfield localization of a positive contravariant level model structure,
where maps $X \to Y$ in $\sset^{\cI}/Z$ are weak equivalences or fibrations
if the maps $X(\bld{n}) \to Y(\bld{n})$ are  weak equivalences or fibrations
in the contravariant model structure on $\sset/Z(\bld{n})$
for all $\bld{n}$ in $\cI$ with $n\geq 1$.
The reason for considering the positive contravariant $\cI$-model structure
is that it can be used to give a symmetric monoidal model
for the contravariant model structure on simplicial sets
over a symmetric monoidal $\infty$-category, see~\cite[Theorem\,3.15]{Nikolaus-S_presentably}.

The positive contravariant $\cI$-model structure is somewhat difficult to work with since we are not aware of an intrinsic characterization of its weak equivalences. To identify the resulting homotopy category, we recall from~\cite[Theorem\,3.1]{Kodjabachev-S_commutative} that there is a \emph{positive Joyal $\cI$-model structure} on $\sset^{\cI}$ whose weak equivalences are the maps that induce weak equivalences in the Joyal model structure when forming the homotopy colimit with respect to the Joyal model structure. The key property of the positive contravariant $\cI$-model structure is now that when $Z$ is fibrant in the
positive Joyal $\cI$-model structure, then a zig-zag of Joyal $\cI$-equivalences between positive fibrant objects relating $Z$ to a constant $\cI$-diagram on a simplicial set $S$ induces a zig-zag of Quillen equivalences between $\sset^{\cI}/Z$ with the  positive contravariant $\cI$-model structure and $\sset/S$ with the contravariant model structure~\cite[Lemma\,3.11 and Corollary 3.14]{Nikolaus-S_presentably}. 

\begin{lemma}\label{lem:N-isos-I-equivalences}
  If $Z$ is an object of $\sset^{\cI}$ that is fibrant in the
  positive Joyal $\cI$-model structure, then an $\cN$-isomorphism
  in $\sset^{\cI}/Z$ is also a weak equivalence in the positive
  contravariant $\cI$-model structure.
\end{lemma}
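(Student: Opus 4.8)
The plan is to reduce Lemma~\ref{lem:N-isos-I-equivalences} to Lemma~\ref{lem:N-isos-I-equivalences-constant-base} by transporting along the zig-zag of Quillen equivalences supplied by \cite[Lemma\,3.11 and Corollary\,3.14]{Nikolaus-S_presentably}. First I would invoke the fact that since $Z$ is fibrant in the positive Joyal $\cI$-model structure, there is a zig-zag of Joyal $\cI$-equivalences between positive fibrant objects connecting $Z$ to a constant $\cI$-diagram $\const S$ on a simplicial set $S$; this induces a zig-zag of Quillen equivalences between $\sset^{\cI}/Z$ with the positive contravariant $\cI$-model structure and $\sset/S$ with the contravariant model structure. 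The essential point is then that the derived functors in this zig-zag, being built from base-change and forgetful functors along the structural maps, take $\cN$-isomorphisms to $\cN$-isomorphisms (or, at the endpoint, to contravariant weak equivalences), because the colimit functor $\colim_\cN$ commutes with base change and with the forgetful functors to simplicial sets.

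The key steps, in order, are: (1) record that an $\cN$-isomorphism $f$ in $\sset^{\cI}/Z$ remains an $\cN$-isomorphism after applying the (left Quillen) base-change functors along the maps in the zig-zag, using that $\colim_\cN$ commutes with pullback of simplicial sets and with composition of structure maps; (2) similarly, that the right adjoints (restriction/forgetful along a map of bases) preserve $\cN$-isomorphisms, since they do nothing to the underlying $\cI$-diagram of simplicial sets and $\colim_\cN$ is computed on underlying simplicial sets; (3) arrive, at the $\sset/S$ end of the zig-zag, at an $\cN$-isomorphism in $(\sset/S)^{\cI}$, which is a contravariant $\cI$-equivalence by Lemma~\ref{lem:N-isos-I-equivalences-constant-base}; (4) run the zig-zag of Quillen equivalences backwards: a derived equivalence between model categories reflects weak equivalences, so the original $f$ is a weak equivalence in the positive contravariant $\cI$-model structure on $\sset^{\cI}/Z$. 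A small amount of care is needed because the intermediate objects in the zig-zag are not literally over $Z$ or over $S$ but over the intermediate diagrams; one handles each edge of the zig-zag separately, checking the $\cN$-isomorphism condition is preserved by the left and right adjoints of that single Quillen pair.

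The main obstacle I expect is verifying that the specific Quillen equivalences of \cite[Lemma\,3.11]{Nikolaus-S_presentably} genuinely preserve the property of being an $\cN$-isomorphism at the point-set (or at least derived) level. The left adjoints there are given by pullback along a Joyal $\cI$-equivalence of base diagrams, and one must check that pulling back an $\cN$-isomorphism of $\cI$-diagrams of simplicial sets along a morphism of base $\cI$-diagrams again yields an $\cN$-isomorphism; this rests on $\colim_\cN$ commuting with the relevant pullbacks, which holds because filtered (indeed sequential) colimits of simplicial sets commute with finite limits, and pullbacks are finite limits. The right adjoints are restriction along the same base maps, which is even easier since the underlying $\cI$-diagram is unchanged. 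Once preservation of $\cN$-isomorphisms along each edge is in hand, the conclusion is a formal consequence of Lemma~\ref{lem:N-isos-I-equivalences-constant-base} together with the fact that any functor which is part of a Quillen equivalence reflects weak equivalences between cofibrant--fibrant (or, after appropriate replacement, arbitrary) objects.
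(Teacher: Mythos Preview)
Your overall strategy coincides with the paper's: transport the $\cN$-isomorphism along the zig-zag of Quillen equivalences to the constant-base situation and invoke Lemma~\ref{lem:N-isos-I-equivalences-constant-base}. Two points deserve correction or elaboration.

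First, a minor factual slip: in the Quillen adjunctions of \cite[Lemma~3.11]{Nikolaus-S_presentably}, the left adjoint $g_!$ along a map $g$ of base diagrams is postcomposition, not pullback; pullback $g^*$ is the right adjoint. Both do preserve $\cN$-isomorphisms (postcomposition trivially, pullback because $\colim_\cN$ is filtered and hence commutes with finite limits), so this confusion does not derail your steps~(1)--(2).

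The real gap is in passing from point-set preservation of $\cN$-isomorphisms to the derived statement needed for step~(4). Quillen equivalences reflect weak equivalences only via their \emph{derived} functors, and computing those requires cofibrant replacement; but a cofibrant replacement of an $\cN$-isomorphism is generally not itself an $\cN$-isomorphism, so you cannot directly feed the output of steps~(1)--(3) into the reflection argument. The paper closes this gap as follows. It forms the point-set pullback $f\times_Z Z^c$ along a chosen cofibrant replacement $Z^c\to Z$ (arranged to be a positive-level acyclic Kan fibration), then pushes forward to $\const_\cI S$; the result is an $\cN$-isomorphism over the constant base and hence, by Lemma~\ref{lem:N-isos-I-equivalences-constant-base} together with the identification of weak equivalences over a constant base from \cite[Proposition~3.13]{Nikolaus-S_presentably}, a weak equivalence. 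Separately, it argues that this point-set image is weakly equivalent over $\const_\cI S$ to the genuine derived image $(Z^c\to\const_\cI S)_!\bigl((f\times_Z Z^c)^c\bigr)$, using that the cofibrant replacement map is a positive-level acyclic Kan fibration and that such maps are acyclic fibrations (hence weak equivalences) in the positive contravariant $\cI$-model structure over any base. Only then does the Quillen-equivalence reflection apply. Your phrase ``after appropriate replacement'' in step~(4) acknowledges the issue but does not resolve it; this comparison between the point-set and derived transports is precisely where the technical content of the paper's proof lies.
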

\begin{proof}
We let $Z^c \to Z$ be a cofibrant replacement of $Z$ in the positive Joyal $\cI$-model structure on $\sset^{\cI}$. Then we can assume $Z^c \to Z$ to be a positive level fibration so that $Z^c \to Z$ is an acyclic Kan fibration in positive degrees. Moreover, we write $S = \colim_{\cI}Z^c$ and consider the adjunction unit $Z^c \to \const_{\cI}S$. The latter map is a Joyal $\cI$-equivalence by~\cite[Corollary\,2.4]{Kodjabachev-S_commutative} and thus a positive Joyal level equivalence since both $Z^c$ and $\const_{\cI}S$ are homotopy constant  in positive degrees with respect to the Joyal model structure. 

Given an object $X \to Z$ in $\sset^{\cI}/Z$, we get a composite of 
acyclic Kan fibrations 
\[\xymatrix@-1pc{ (X \times_{Z^c} Z)^c \ar@{->>}[rr]^-{\sim} && X \times_{Z^c} Z \ar@{->>}[rr]^-{\sim} && X}\]
where the first map is a positive $\cI$-cofibrant replacement in
$\sset^{\cI}/Z^c$ and the second map is the base change of $Z^c \to Z$
along $X \to Z$. It follows that $X$ is weakly equivalent to image of
$ (X \times_{Z^c} Z)^c$ under
$(Z^c \to Z)_!\colon \sset^{\cI}/Z^c \to \sset^{\cI}/Z$. Since both
$(Z^c \to Z)_!$ and $(Z^c \to \const_{\cI}S)_!$ are Quillen
equivalences~\cite[Proposition~3.10]{Nikolaus-S_presentably}, we
deduce that a map $f\colon X \to X'$ in $\sset^{\cI}/Z$ is a weak
equivalence in the positive contravariant $\cI$-model structure if and
only if the image of $ (f \times_{Z^c} Z)^c$ under
$(Z^c \to \const_{\cI}S)_!$ is.

Now assume that $f$ is an $\cN$-isomorphism. Since $\colim_{\cN}$
commutes with pullbacks and sends maps that are acyclic Kan fibrations
in positive levels to acyclic Kan fibrations, the image of
$f \times_{Z^c} Z $ under $(Z^c \to \const_{\cI}S)_!$ is an
$\cN$-isomorphism that is weakly equivalent in
$\sset^{\cI}/\const_{\cI}S$ to the image of $ (f \times_{Z^c} Z)^c $
under $(Z^c \to \const_{\cI}S)_!$. This reduces the claim to showing
that an $\cN$-isomorphism over a constant base is a positive
contravariant $\cI$-equivalence. In this case, the proof
of~\cite[Proposition\,3.13]{Nikolaus-S_presentably} shows that weak
equivalences in the positive contravariant $\cI$-model structure
coincide with the $\hocolim_{\cI}$-equivalences on $\cI$-diagrams in
$\sset/S$. Thus the claim follows from
Lemma~\ref{lem:N-isos-I-equivalences-constant-base}.
\end{proof}

\begin{prop}\label{prop:contravariant-I-hty-inv}
  Let $N$ be a commutative $\cI$-simplicial set that is fibrant
  in the positive Joyal $\cI$-model structure on $\sset^{\cI}$,
  and whose underlying $\cI$-simplicial set is flat.
  Then the $\boxtimes$-product on $\sset^{\cI}/N$ is homotopy invariant
  with respect to the positive Joyal $\cI$-model structure. 
\end{prop}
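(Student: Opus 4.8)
The plan is to generalize, to the contravariant setting, the alternative proof of Theorem~\ref{thm:intro:boxtimes-fully-homotopical} sketched in Remark~\ref{rem:alternative-pf-boxtimes-fully-homotopical}. Fix an object $X\to N$ of $\sset^{\cI}/N$; we must show that $(X\to N)\boxtimes -$ preserves the weak equivalences of the positive contravariant $\cI$-model structure. Since the underlying $\cI$-simplicial set of $N$ is flat, the unit $\eta_N\colon N\to N^{\flat}$ is an isomorphism by Corollary~\ref{cor:adjunction-properties}(iii), so applying $(-)^{\flat}$ to the structure map yields a factorization $X\to X^{\flat}\to N$ in which $\eta_X\colon X\to X^{\flat}$ is a map over $N$ that is an $\cN$-isomorphism by Corollary~\ref{cor:adjunction-properties}(ii). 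For every object $Y\to N$ the map $\eta_X\boxtimes\id_Y$ is then an $\cN$-isomorphism, by Corollary~\ref{cor:boxtimes-preserves-N-isos} applied to $Y\to N$ together with the symmetry of $\boxtimes$, hence a weak equivalence in the positive contravariant $\cI$-model structure by Lemma~\ref{lem:N-isos-I-equivalences} (which applies since $N$ is fibrant in the positive Joyal $\cI$-model structure). By two-out-of-three it thus suffices to treat the case that $X$ is flat, i.e.\ to establish the contravariant analog of \cite[Proposition\,8.2]{Sagave-S_diagram}: for a flat $\cI$-space $X^{\flat}$ over $N$, the functor $X^{\flat}\boxtimes -$ preserves positive contravariant $\cI$-equivalences.

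Before proving this I would record two upgrades of earlier results. First, combining the strong symmetric monoidality of $(-)(\omega)\colon \sset^{\cI}/N\to\sset^M_{\tm}/N(\omega)$ (Proposition~\ref{prop:strong monoidal}), the identification of $\cN$-equivalences over $N$ with underlying weak equivalences of tame $M$-spaces (Proposition~\ref{prop:omega translates}(i)), and the fact that $\boxtimes$ on tame $M$-spaces preserves weak equivalences between tame $M$-spaces (Theorem~\ref{thm:box preserves weak M}), the argument proving Corollary~\ref{cor:boxtimes-preserves-N-isos} also shows that $\boxtimes$ on $\sset^{\cI}/N$ preserves \emph{$\cN$-equivalences}, i.e.\ maps inducing a weak equivalence on $\colim_{\cN}$. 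Second, the proofs of Lemma~\ref{lem:N-isos-I-equivalences-constant-base} and Lemma~\ref{lem:N-isos-I-equivalences} in fact establish more than is stated: every $\cN$-equivalence over $N$ is a weak equivalence in the positive contravariant $\cI$-model structure. Finally, since $\cN=(\bN,\leq)$ is a sequential poset and sequential colimits of acyclic Kan fibrations are again acyclic Kan fibrations, any map over $N$ whose underlying map of $\cI$-spaces is a positive level acyclic fibration (an acyclic Kan fibration at each level $\bld n$ with $n\geq 1$, after forgetting the projection to $N$) induces an acyclic Kan fibration on $\colim_{\cN}$, hence is an $\cN$-equivalence over $N$. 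In particular, since left Bousfield localization does not change the acyclic fibrations, the acyclic fibrations of the positive contravariant $\cI$-model structure are $\cN$-equivalences.

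Then one chooses a cofibrant replacement $c\to X^{\flat}$ in the positive contravariant $\cI$-model structure. It is an acyclic fibration, hence a positive level acyclic fibration, hence an $\cN$-equivalence over $N$. Consequently, for every object $Y\to N$ the map $c\boxtimes\id_Y\to X^{\flat}\boxtimes\id_Y$ is obtained by applying $-\boxtimes(Y\to N)$ to the $\cN$-equivalence $c\to X^{\flat}$ and is therefore an $\cN$-equivalence, hence a positive contravariant $\cI$-equivalence. By two-out-of-three it now suffices to prove that $c\boxtimes -$ preserves positive contravariant $\cI$-equivalences. By \cite[Theorem\,3.15]{Nikolaus-S_presentably} the positive contravariant $\cI$-model structure on $\sset^{\cI}/N$ is a symmetric monoidal model category, so since $c$ is cofibrant the functor $c\boxtimes -$ is a left Quillen endofunctor of $\sset^{\cI}/N$, and by Ken Brown's lemma it preserves weak equivalences between cofibrant objects. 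Given an arbitrary positive contravariant $\cI$-equivalence $g\colon Y\to Y'$ over $N$, one picks cofibrant replacements $q_Y\colon QY\to Y$ and $q_{Y'}\colon QY'\to Y'$ and a map $Qg\colon QY\to QY'$ with $q_{Y'}\circ Qg=g\circ q_Y$ (which exists since $QY$ is cofibrant and $q_{Y'}$ is an acyclic fibration); then $Qg$ is a weak equivalence between cofibrant objects by two-out-of-three, so $c\boxtimes Qg$ is a weak equivalence, while $c\boxtimes q_Y$ and $c\boxtimes q_{Y'}$ are $\cN$-equivalences (as $q_Y$ and $q_{Y'}$ are positive level acyclic fibrations) and hence positive contravariant $\cI$-equivalences. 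Two-out-of-three then yields that $c\boxtimes g$ is a weak equivalence, as required.

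The main obstacle is the middle step: one must carefully upgrade Corollary~\ref{cor:boxtimes-preserves-N-isos} and Lemma~\ref{lem:N-isos-I-equivalences} from $\cN$-isomorphisms to $\cN$-equivalences — the latter amounting to checking that in the reductions carried out in those proofs $\colim_{\cN}$ still commutes with the relevant pullbacks and sends the maps under consideration to contravariant weak equivalences (where one invokes right properness of the Kan--Quillen model structure) — and one must confirm that the positive contravariant $\cI$-model structure is genuinely a symmetric monoidal model category in which cofibrant replacements can be realized by positive level acyclic fibrations, so that Ken Brown's lemma and the $\cN$-equivalence comparisons combine as above. An alternative for this last point would be to show directly, via a contravariant over-$N$ version of Proposition~\ref{prop:monos are flat cofibrations}, that $X^{\flat}\boxtimes -$ is (up to the cofibrant replacement above) already a left Quillen functor because $X^{\flat}$ is flat.
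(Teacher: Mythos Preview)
Your reduction from a general object $X\to N$ to $X^\flat\to N$ via the $\cN$-isomorphism $\eta_X$ is correct and matches the paper exactly. The divergence comes in how you handle the flat case, and there your argument has a genuine gap.

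The problem is the claimed upgrade of Lemma~\ref{lem:N-isos-I-equivalences} from $\cN$-isomorphisms to $\cN$-equivalences. You define an $\cN$-equivalence as a map inducing a \emph{Kan} weak equivalence on $\colim_{\cN}$, because that is what Theorem~\ref{thm:box preserves weak M} and Proposition~\ref{prop:omega translates}(i) provide after applying $\boxtimes$. But such maps need not be weak equivalences in the positive contravariant $\cI$-model structure. Already over the constant base $S=\Delta^1$, the inclusion $\{0\}\hookrightarrow\Delta^1$ is a Kan weak equivalence but not a contravariant weak equivalence: both sides are right fibrations over $\Delta^1$, and their fibres over the vertex $1$ are empty and a point, respectively. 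The proofs of Lemmas~\ref{lem:N-isos-I-equivalences-constant-base} and~\ref{lem:N-isos-I-equivalences} do show that a map inducing a \emph{contravariant} weak equivalence on $\colim_{\cN}$ is a contravariant $\cI$-equivalence, but there is no reason that $c(\omega)\boxtimes q_Y(\omega)$ or $(c\to X^\flat)(\omega)\boxtimes Y(\omega)$ should be a contravariant weak equivalence over $N(\omega)$: Theorem~\ref{thm:box preserves weak M} only gives a weak equivalence of underlying simplicial sets, and for a non-discrete first factor the bijection of Lemma~\ref{lem:unnatural-box-vs-product} is not simplicial, so one cannot upgrade this to an acyclic Kan fibration. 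Your two ``upgrades'' thus pull in opposite directions---one needs the Kan notion for $\boxtimes$-invariance, the other needs the contravariant notion to deduce $\cI$-equivalence---and they do not meet in the middle.

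The paper avoids this by not passing to a further cofibrant replacement. Instead, for flat $X$ it argues directly: first for $X$ of the special form $K\times\cI(\bld k,-)/H$, where the freeness of the $\Sigma_k$-action from \cite[Lemma~5.7]{Sagave-S_diagram} shows that $X\boxtimes -$ preserves absolute \emph{level} contravariant equivalences; then a cell induction over the latching filtration of a general flat $X$, following the argument of \cite[Proposition~3.18]{Nikolaus-S_presentably} (which treats the projective-cofibrant case), finishes the proof. Your ``alternative'' in the last sentence---adapting Proposition~\ref{prop:monos are flat cofibrations} to get a direct left Quillen statement for $X^\flat\boxtimes -$---is in spirit much closer to what the paper actually does.
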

\begin{proof}
  Because the flat $\cI$-simplicial sets are reflective in $\cI$-simplicial sets
  and the underlying $\cI$-simplicial set of $N$ is flat,
  every morphism $f\colon X\to N$ factors as $f=\bar f\circ\eta_X$
  for a unique morphism  $\bar f\colon X^\flat\to N$, 
  where $\eta_X\colon X\to X^\flat$ is the adjunction unit.
  Then $\eta_X$ is a morphism in $\sset^{\cI}/N$ from $f\colon X\to N$ to $\bar f\colon X^\flat\to N$.
  
  Corollary~\ref{cor:boxtimes-preserves-N-isos} and
  Lemma~\ref{lem:N-isos-I-equivalences} show that
  $\eta_X\colon (f\colon X\to N)\to (\bar f\colon X^\flat\to N)$
  induces a weak equivalence $\eta_X \boxtimes (Y \to N)$ in the positive
  contravariant $\cI$-model structure for every object $Y\to N$ in $\sset^{\cI}/N$.
  So it is sufficient to show
  that $(X \to N) \boxtimes -$ preserves weak equivalences in the
  positive contravariant $\cI$-model structure when $X$ is flat. 

  To see this, we first consider the case where $X$ is of the form
  $X = K\times \cI(\bld{\bld{k}}, -)/H $,
  where   $K$ is a simplicial set, $\bld{k}$ is an object
  of $\cI$, and $H \subseteq \Sigma_k$ is a subgroup. Since the $\Sigma_k$-action on
  \[   ((K\times \cI(\bld{k}, -)) \boxtimes Y)(\bld{m})\ \iso\ 
  K\times \colim_{\bld{k}\concat \bld{l} \to \bld{m}} X(\bld{l}) \]
  is levelwise free by~\cite[Lemma\,5.7]{Sagave-S_diagram}, it follows
  that $X \boxtimes - $ preserves absolute levelwise contravariant
  $\cI$-equivalences. From here the proof proceeds as the one
  of~\cite[Proposition\,3.18]{Nikolaus-S_presentably} which addresses
  the claim for $X$ being absolute projective cofibrant (rather than
  flat).
\end{proof}

\begin{proof}[Proof of Theorem~\ref{thm:presentably-sym-monoidal-intro}]
  Proposition~\ref{prop:contravariant-I-hty-inv} implies that the
  monoidal product on the category $\sset^{\cI}/M^{\mathrm{rig}}$
  considered in~\cite[Theorem\,3.15]{Nikolaus-S_presentably} is
  homotopy invariant.  Since the homotopy invariance allows us to
  cofibrantly replace objects, it is preserved under the monoidal left
  Bousfield localization arising
  from~\cite[Proposition~2.2]{Nikolaus-S_presentably}.  Therefore, the
  proof of~\cite[Theorem~1.1]{Nikolaus-S_presentably} shows the claim.
\end{proof}

\begin{appendix}
\section{Algebras over the injection operad}\label{app:algebras-inj-operad}

In this appendix we identify the commutative monoids
in the symmetric monoidal category of tame $M$-sets
with the tame algebras over the injection operad,
see Theorem \ref{thm:Iset and com}.
We also show that for tame $M$-sets, the box product
introduced in Definition \ref{def:box-product} as an $M$-subset of the product
is isomorphic to the operadic product, see Proposition \ref{prop:convolution versus operadic}. 
The results in this appendix are combinatorial in nature,
and they are not needed for the homotopical analysis in the body of the paper;
however, we feel that Theorem \ref{thm:Iset and com} and 
Proposition \ref{prop:convolution versus operadic} are important to
put the box product of $M$-spaces into context.

\begin{con}[Injection operad]
  The injection monoid $M$ is the monoid of 1-ary operations in the
  {\em injection operad} $\cM$, an operad
  in the category of sets with respect to cartesian product.
  As before, we set $\mathbf n=\{1,\dots,n\}$ and $\omega=\{1,2,\dots\}$.
  For $n\geq 0$ we let $M(n)$ denote the set of injective maps
  from the set $\mathbf n\times\omega$ to the set $\omega$.
  The symmetric group $\Sigma_n$ acts on $M(n)$ by permuting the first
  coordinate in $\mathbf n\times\omega$: for a permutation $\sigma\in\Sigma_n$
  and an injection $\varphi\colon \mathbf n\times\omega\to\omega$ we set
  \[ (\varphi\sigma)(k,i) \ = \ \varphi(\sigma(k),i) \ .\]
  The collection of sets $\{M(n)\}_{n\geq 0}$ then becomes an operad $\cM$ 
  via `disjoint union and composition'. More formally, the operad structure maps
  \begin{align}\label{eq:operad structure}
    M(k)\times M(n_1)\times\cdots\times M(n_k) \
    &\to \ M(n_1+\cdots+n_k) \\
    (\varphi;\,\psi_1,\dots,\psi_k) \qquad
    &\longmapsto  \varphi\circ(\psi_1+\dots+\psi_k) \nonumber
  \end{align}
  are defined by setting
  \[ (\varphi\circ(\psi_1+\dots+\psi_k))(i,j) \ = \ 
    \varphi(m,\, \psi_m(i-(n_1+\dots+n_{m-1}),\, j)) \]
  where $m\in\{1,\dots,k\}$ is the unique number such that
  $n_1+\dots+n_{m-1}< i\leq n_1+\dots+n_{m}$.
\end{con}

As for operads in any symmetric monoidal category, 
a categorical operad has a category of algebras over it.

\begin{defn}\label{def:I-category}
An {\em $\cM$-set} is a set equipped with an algebra structure over the injection operad $\cM$.  
A {\em morphism} of $\cM$-sets is a morphism of algebras over $\cM$.
\end{defn}

Given an $\cM$-set $X$, we write the action of the $n$-ary operations as
\[    M(n)\times X^n\ \to \ X \ , \quad
(\varphi,x_1,\dots,x_n)\ \longmapsto \ \varphi_*(x_1,\dots,x_n)\ .\]

Because $M(1)=M$ is the injection monoid, every $\cM$-set has an
underlying $M$-set.  We call an $\cM$-set {\em tame} if the underlying
$M$-set is tame in the sense of Definition \ref{def:finitely supprted}.
Because $\mathbf 0$ is the empty set, the set $M(0)$
has a single element, the unique function
$\emptyset=\mathbf 0\times\omega\to \omega$.  So every $\cM$-set $X$
has a distinguished element $0$, the image of the action map
$M(0)\to X$. The associativity of the operad action implies that the
distinguished element $0$ of an $\cM$-set is supported on the empty set.

\begin{eg}\label{eg:abelian monoid}
  We let $A$ be an abelian monoid.
  Then $A$ becomes a `trivial' $\cM$-set: for $n\geq 0$
  and $\varphi\in M(n)$ we define 
  \[  \varphi_*\ \colon \ A^n \ \to \ A\ , \quad \varphi_*(a_1,\dots,a_n)\ = \ a_1+\dots+a_n \]
  by summing in the monoid $A$; in particular, $\varphi_*$ 
  only depends on~$n$, but not on~$\varphi$.
  The $\cM$-set associated to an abelian monoid has the special property that
  the monoid $M=M(1)$ acts trivially. 
  One can show that $\cM$-sets with trivial $M$-action `are' the abelian monoids.
\end{eg}

The support of elements in sets with an action of $M = M(1)$ was
introduced in Definition \ref{def:support}. Now we discuss the
behavior of support in the more highly structured $\cM$-sets, in
particular its interaction with $n$-ary operations for $n\ne 1$.  For
example, we will show that given any pair of finitely supported
elements $x,y$ and an injection $\varphi\in M(2)$, then
$\varphi_*(x,y)$ is finitely supported and
\[
  \supp(\varphi_*(x,y))\ \subseteq\
  \varphi(\{1\}\times\supp(x)  \cup  \{2\}\times\supp(y)  )\ .
\]
For this we need the following lemma about the orbits of the right
$M(1)^n$-action on the set $M(n)$. Since the monoid
$M(1)^n$ is not a group, the relation resulting from this
action is not symmetric and it is not a priori clear when two elements
of $M(n)$ are equivalent in the equivalence relation that it
generates.  The following lemma is a discrete counterpart of the
analogous property for the linear isometries operad, compare
\cite[Lemma I.8.1]{ekmm}.

\begin{lemma}\label{lemma:monoid action} 
  Let $n \geq 2$ and let $A_1, \dots, A_n$ be finite subsets
  $\omega$. Consider the equivalence relation on the set $M(n)$ of
  injections from $\mathbf n\times\omega$ to $\omega$ generated by the
  relation
  \[ \varphi\ \sim\ \varphi(f_1+\dots+f_n) \]
  for all $f_i\in M(1)$ such that $f_i$ is the identity on $A_i$.
  Then two elements of $M(n)$ are equivalent if and only if they 
  agree on the subsets $\{i\}\times A_i$ of $\mathbf n\times\omega$ for all $i=1,\dots,n$.
\end{lemma}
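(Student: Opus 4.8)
The plan is to prove both implications of the stated equivalence. The easy direction is: if $\varphi$ and $\varphi'$ are equivalent under the generated equivalence relation, then they agree on each $\{i\} \times A_i$. It suffices to check this for the generating relation, and here it is immediate: if $\psi = \varphi(f_1 + \dots + f_n)$ with each $f_i$ the identity on $A_i$, then for $a \in A_i$ we have $\psi(i,a) = \varphi(i, f_i(a)) = \varphi(i,a)$. Since ``agreeing on all the $\{i\}\times A_i$'' is itself an equivalence relation containing the generators, it contains the generated one.

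The substantive direction is the converse: if $\varphi$ and $\varphi'$ agree on $\bigcup_i \{i\}\times A_i$, I want to connect them by a zig-zag of generating moves. The strategy mirrors the proof of Proposition~\ref{prop:intersect supports}: use a large natural number $m$ to ``park'' the offending coordinates out of the way, exploiting that the $f_i$ need only be injections, not bijections, yet we still have enough room. Concretely, I would first reduce to the case where $\varphi$ and $\varphi'$ have the same image, or even are both bijections $\mathbf n \times \omega \to \omega$: given an arbitrary injection $\varphi$, pick any injection $g_i \in M(1)$ that is the identity on $A_i$ and pushes $\omega \setminus A_i$ far out (say, past everything relevant), so that $\varphi(g_1 + \dots + g_n)$ has large complement; then it is related to $\varphi$ and we can further modify it. The point is that by applying generating moves we can bring both $\varphi$ and $\varphi'$ to normal forms that are easy to compare, namely injections that agree on all of $\mathbf n\times\omega$ except on a controlled piece, and then a single further move identifies them.

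More carefully, here is the shape of the argument I would carry out. Fix $m$ larger than every element of $\varphi(\bigcup_i\{i\}\times A_i) = \varphi'(\bigcup_i\{i\}\times A_i)$ and larger than any index we will touch. For each $i$, let $h_i \in M(1)$ be the injection that is the identity on $A_i$ and sends the $j$-th smallest element of $\omega \setminus A_i$ to (some fixed injection into) a sparse set of indices all exceeding $m$, chosen so that the various $\{i\}$-blocks land in disjoint regions. Then $\varphi \sim \varphi(h_1 + \dots + h_n) =: \varphi_0$ and $\varphi' \sim \varphi'(h_1 + \dots + h_n) =: \varphi'_0$. By construction $\varphi_0$ and $\varphi'_0$ agree on $\bigcup_i \{i\}\times A_i$ (they agree with $\varphi,\varphi'$ there) and on $\bigcup_i \{i\}\times(\omega\setminus A_i)$ both have image contained in the region above $m$, where we have arranged plenty of unused room. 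Now I would produce injections $k_i \in M(1)$, each the identity on $A_i$, with $\varphi_0(k_1 + \dots + k_n) = \varphi'_0(k_1+\dots+k_n)$: on $A_i$ both sides equal $\varphi_0(i,-)=\varphi'_0(i,-)$ automatically, and off $A_i$ we use the freedom in $k_i$ to funnel $\omega\setminus A_i$ into a common sparse target set on which we can explicitly match $\varphi_0$ and $\varphi'_0$ (both are injections with images in the high region, so a suitable reindexing inside that region makes them coincide). This gives $\varphi_0 \sim \varphi_0(k_1+\dots+k_n) = \varphi'_0(k_1+\dots+k_n) \sim \varphi'_0$, hence $\varphi \sim \varphi'$.

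The main obstacle is the bookkeeping in this last step: one must choose the ``parking'' injections $h_i$ and then the matching injections $k_i$ so that all $n$ blocks stay disjoint and so that, after funnelling, $\varphi_0$ and $\varphi'_0$ genuinely agree and not merely have equal image. This is exactly the kind of finite combinatorial juggling done in Proposition~\ref{prop:intersect supports}, but with $n$ coordinates instead of the implicit two, so the indices need care; there is no conceptual difficulty, only the need to write down the permutation-free analogue of ``choose a bijection $\gamma$'' that works for injections of $\omega$ with infinite complement. I would organize the write-up by first stating and proving a small sublemma: \emph{given finite $A\subseteq\omega$ and any two injections $u,v\colon \omega\to\omega$ that agree on $A$, there is an injection $w\colon\omega\to\omega$, identity on $A$, with $uw=vw$} — this is the single-coordinate heart of the matter — and then apply it coordinatewise (after the parking step ensures the images in different coordinates are disjoint, so the coordinatewise choices do not interfere).
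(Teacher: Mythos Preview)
Your easy direction is fine. The hard direction, however, rests on a sublemma that is simply false, and the gap is not just bookkeeping.

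You propose: \emph{given finite $A\subseteq\omega$ and injections $u,v\colon\omega\to\omega$ that agree on $A$, there is an injection $w$, identity on $A$, with $uw=vw$.} Take $A=\emptyset$, $u=\id$, and $v(j)=j+1$; then $uw=vw$ would force $w(j)=w(j)+1$ for all $j$, which is impossible. More to the point of your actual plan, the step ``find $k_i$ with $\varphi_0(k_1+\dots+k_n)=\varphi'_0(k_1+\dots+k_n)$'' fails already for $n=2$ with $A_1=A_2=\emptyset$: let $\varphi(1,j)=4j$, $\varphi(2,j)=4j+1$, $\varphi'(1,j)=4j+2$, $\varphi'(2,j)=4j+3$. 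Any parking move $\varphi_0=\varphi(h_1+h_2)$ keeps the image of $\varphi_0(1,-)$ inside $4\mathbb Z$ and of $\varphi_0(2,-)$ inside $4\mathbb Z+1$; similarly for $\varphi'_0$. Hence $\varphi_0(k_1+k_2)$ and $\varphi'_0(k'_1+k'_2)$ have disjoint images for \emph{any} choices of $k_i,k'_i$, so they can never coincide --- even if you allow different $k$'s on the two sides. A two-step zigzag (one forward move from each of $\varphi,\varphi'$ to a common element) is thus impossible in general; the ``parking'' step does not create any overlap between the images.

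This is exactly why the paper's proof is more elaborate: after reducing to the case $A_i=\emptyset$, it builds a longer zigzag for $n=2$ by a parity argument (five cases, passing through a fixed reference bijection $s$), and then does induction on $n$. The essential content you are missing is that forward moves $\varphi\mapsto\varphi(f_1+\dots+f_n)$ can only shrink the image, so connecting two injections with disjoint images requires at least one backward move through a \emph{larger} injection --- and arranging that enlargement is where the real work lies.
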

\begin{proof} The `only if` is clear since the value of $\varphi$ on
  $\{i\}\times A_i$ does not change when $\varphi$ is modified by a
  generating relation.  For the converse we consider
  $\varphi,\varphi'\in M(n)$ which agree on $\{i\}\times A_i$ for all
  $i=1,\dots,n$.  If we choose bijections between $\omega$ and the
  complements of the $A_i$ in $\omega$ we can reduce (by conjugation
  with the bijections) to the special case where all $A_i$ are empty.

  We prove the special case by induction over $n$, starting with $n=2$.
  We need to show that all injections in $M(2)$ are equivalent in 
  the equivalence relation generated by the right action of $M(1)^2$.
  We show that an arbitrary injection $\varphi$ is equivalent to the
  bijection $s\colon \mathbf 2\times\omega\to\omega$ given by $s(1,i)=2i-1$ and
  $s(2,i)=2i$.

  Case 1: suppose that $\varphi(\{1\}\times\omega)$ consists entirely 
  of odd numbers and $\varphi(\{2\}\times\omega)$  consists entirely 
  of even numbers. We define $\alpha,\beta\in M$ by
  $\alpha(i)=(\varphi(1,i)+1)/2$, respectively $\beta(i)=\varphi(2,i)/2$.
  Then we have $\varphi=s(\alpha+\beta)$, so $\varphi$ and 
  $s$ are equivalent.
  
  Case 2: suppose that $\varphi(\{1\}\times\omega)$ consists entirely 
  of even numbers and $\varphi(\{2\}\times\omega)$  consists entirely 
  of odd numbers. We use the same kind of argument as in case~1.
  
  Case 3: suppose that the image of $\varphi$ consists entirely of odd numbers.
  We define $\psi\in M(2)$ by $\psi(1,i)=\varphi(1,i)$,
  $\psi(2,2i)=\varphi(2,i)$ and $\psi(2,2i-1)=2i$.
  Define $d_+, d_-\in M(1)$ by $d_+(i)=2i$ and $d_-(i)=2i-1$.
  Then $\varphi=\psi(\text{id}+d_+)$, so $\varphi$ is equivalent to
  $\psi$, which in turn is equivalent to $\psi(\text{id}+d_-)$.
  But $\psi(\text{id}+d_-)$ satisfies the hypothesis of case 1,
  so altogether $\varphi$ and $s$ are equivalent.
  
  Case 4: suppose that the image of $\varphi$ consists entirely of even numbers.
  This can be reduced to case 2 by the analogous arguments as in case 3.
  
  Case 5: In the general case we exploit that $\varphi(\{1\}\times\omega)$ 
  contains infinitely many odd numbers or it contains infinitely many
  even numbers (or both). So we can choose an injection $\alpha\in M(1)$
  such that $\varphi(\{1\}\times\alpha(\omega))$ consists of numbers
  of the same parity.
  Similarly we can choose $\beta\in M(1)$ such that 
$\varphi(\{2\}\times\beta(\omega))$ consists of numbers of the same parity.
  But then $\varphi$ is equivalent to $\varphi(\alpha+\beta)$ which
  satisfies the hypothesis of one of the cases 1, 2, 3 or 4.
So any $\varphi\in M(2)$ is equivalent to the elements $s$.

  Now we perform the inductive step and suppose that $n\geq 2$.
  We let $\varphi,\psi\in M(n+1)$ be two injections. 
  We let $s\colon \mathbf 2\times\omega\to\omega$  be any bijection.
  By the inductive hypothesis, the injections
  \[ \varphi\circ(\text{id}_{\mathbf {n-1}\times\omega}+s^{-1})\ ,\ 
  \psi\circ(\text{id}_{\mathbf {n-1}\times\omega}+s^{-1})\ \in \ M(n)  \]
  are equivalent under the action of $M(1)^n$ by precomposition.
  We may assume without loss of generality that
  \[ \varphi\circ(\text{id}_{\mathbf {n-1}\times\omega}+s^{-1})\ =\ 
  \psi\circ(\text{id}_{\mathbf {n-1}\times\omega}+s^{-1})\circ(f_1+\dots+f_n) \]
  for some $f_1,\dots,f_n\in M(1)$.
  By the special case $n=2$, the injections $f_n s$ and $s$ in $M(2)$
  are equivalent under the action of $M(1)^2$.
  We may assume without loss of generality that
  $f_n s=s(\alpha+\beta)$ for some $\alpha,\beta\in M(1)$.
  Then
  \begin{align*}
    \varphi\
    &=\ \varphi\circ(\text{id}_{\mathbf {n-1}\times\omega}+s^{-1})\circ(\text{id}_{\mathbf {n-1}\times\omega}+s) \\ 
    &=\ \psi\circ(\text{id}_{\mathbf {n-1}\times\omega}+s^{-1})\circ(f_1+\dots+f_n)\circ(\text{id}_{\mathbf {n-1}\times\omega}+s)\\
    &=\ \psi\circ(f_1+\dots+f_{n-1}+(s^{-1}f_n s))\
      =\   \psi\circ(f_1+\dots+f_{n-1}+\alpha+\beta)\ .\qedhere
\end{align*}
\end{proof}

\begin{prop}\label{prop:support of varphi(a,b)}
  Let $X$ be an $\cM$-set, $x_i,\dots,x_n$ finitely supported elements of $X$,
  and $\varphi,\psi\in M(n)$ for some $n\geq 1$.
  \begin{enumerate}[\em (i)]
  \item 
    If $\varphi$ and $\psi$ agree on
    $\{i\}\times \supp(x_i)$ for all $i=1,\dots,n$,
    then  $\varphi_*(x_1,\dots,x_n)=\psi_*(x_1,\dots,x_n)$.
  \item The element $\varphi_*(x_1,\dots,x_n)$ is finitely supported and
    \[ \supp(\varphi_*(x_1,\dots,x_n))\ \subseteq \ \bigcup_{i=1,\dots,n}
    \varphi(\{i\}\times \supp(x_i)) \ .\]
  \item The subset $X_\tau$ of $X$ consisting of finitely supported elements
    is closed under the action of the injection operad, and hence a tame $\cM$-set.
  \end{enumerate}
\end{prop}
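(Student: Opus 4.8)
The plan is to deduce all three assertions from Lemma~\ref{lemma:monoid action}, applied with $A_i=\supp(x_i)$, together with the associativity of the $\cM$-action on $X$ and the fact that the monoid $M=M(1)$ of unary operations acts on $X$ through the underlying $M$-set structure. The only genuinely new input is Lemma~\ref{lemma:monoid action}, which is already in place; granting it, parts~(ii) and~(iii) are short consequences of part~(i), and there is essentially no further obstacle.

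For part~(i), Lemma~\ref{lemma:monoid action} with $A_i=\supp(x_i)$ shows that two injections $\varphi,\psi\in M(n)$ agreeing on $\{i\}\times\supp(x_i)$ for all $i$ are linked by a finite zig-zag of generating moves, each replacing some $\chi\in M(n)$ by $\chi\circ(f_1+\dots+f_n)$ with $f_i\in M(1)$ fixing $\supp(x_i)$ elementwise. It therefore suffices to check that such a move does not change $\chi_*(x_1,\dots,x_n)$. By associativity of the $\cM$-action (the operad composition $M(n)\times M(1)^{\times n}\to M(n)$),
\[ \bigl(\chi\circ(f_1+\dots+f_n)\bigr)_*(x_1,\dots,x_n)\ = \ \chi_*(f_1 x_1,\dots,f_n x_n)\ , \]
and $f_i x_i=x_i$ since $x_i$ is supported on $\supp(x_i)$; so the value is preserved along each move and hence is constant on equivalence classes, which gives the claim.

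For part~(ii), set $A=\bigcup_{i=1}^{n}\varphi(\{i\}\times\supp(x_i))$, a finite subset of $\omega$. If $g\in M$ fixes $A$ elementwise, then the composite $g\circ\varphi\in M(n)$ satisfies $(g\circ\varphi)(i,a)=g(\varphi(i,a))=\varphi(i,a)$ for $a\in\supp(x_i)$, because $\varphi(i,a)\in A$; thus $g\circ\varphi$ agrees with $\varphi$ on each $\{i\}\times\supp(x_i)$, and part~(i) yields $(g\circ\varphi)_*(x_1,\dots,x_n)=\varphi_*(x_1,\dots,x_n)$. On the other hand, associativity of the $\cM$-action identifies $(g\circ\varphi)_*(x_1,\dots,x_n)$ with $g\cdot\varphi_*(x_1,\dots,x_n)$. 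Hence $\varphi_*(x_1,\dots,x_n)$ is fixed by every $g$ fixing $A$ elementwise, i.e.\ it is supported on the finite set $A$, which gives both finiteness of its support and the asserted containment.

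Part~(iii) then follows formally: for $n\geq 1$, part~(ii) shows $M(n)\times(X_\tau)^n\to X$ lands in $X_\tau$, while the image of the unique map $M(0)\to X$ is the distinguished element $0$, which is supported on the empty set and so lies in $X_\tau$. Thus $X_\tau$ is closed under the $\cM$-action; it is an $M$-invariant subset by Proposition~\ref{prop:finite support}~(ii), and tame by construction, hence a tame $\cM$-set. The one point requiring attention throughout is to invoke the operad associativity axiom in precisely the right form, matching the composites $\chi\circ(f_1+\dots+f_n)$ and $g\circ\varphi$ with the $M$-actions of the $f_i$ and of $g$ on the underlying $M$-set, and to note that invariance under a single generating move propagates along the equivalence relation it generates.
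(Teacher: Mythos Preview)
Your proof is correct and follows essentially the same approach as the paper: Lemma~\ref{lemma:monoid action} combined with operad associativity for part~(i), and for part~(ii) the observation that precomposing $\varphi$ with any $g\in M$ fixing the claimed supporting set yields an injection satisfying the hypothesis of~(i). The only cosmetic differences are that you invoke~(i) directly in~(ii) where the paper re-applies Lemma~\ref{lemma:monoid action}, and you are slightly more explicit about the zig-zag in~(i) and the $n=0$ case in~(iii); note also that Lemma~\ref{lemma:monoid action} is stated for $n\geq 2$, so the case $n=1$ of (i) and (ii) should strictly be handled separately via Proposition~\ref{prop:finite support}, a point the paper likewise glosses over.
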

\begin{proof}
  (i) By Lemma~\ref{lemma:monoid action} we can assume without loss of generality
  that  $\psi=\varphi(f_1+\dots+f_n)$ for suitable $f_1,\dots,f_n\in M(1)$ such that 
  $f_i$ is the identity on $\supp(x_i)$. Then
  \begin{align*}
    \psi_*(x_1,\dots,x_n) \
    &= \ (\varphi(f_1+\dots+f_n))_*(x_1,\dots,x_n) \\
    &= \ \varphi_*((f_1)_*(x_1),\dots,(f_n)_*(x_n)) \ = \
      \varphi_*(x_1,\dots,x_n) \ .\end{align*}

    (ii) We let $u\in M(1)$ be an injection that fixes
    $\varphi(\{i\}\times \supp(x_i))$ elementwise for all $i=1,\dots,n$.
    Then $u\varphi$ agrees with $\varphi$ on
    $\{i\}\times \supp(x_i)$  for all $i=1,\dots,n$.
    By Lemma~\ref{lemma:monoid action} we can assume without loss of generality
    that  $u\varphi=\varphi(f_1+\dots+ f_n)$ for suitable $f_1,\dots,f_n\in M(1)$ such that 
    $f_i$ is the identity on $\supp(x_i)$. So
    \begin{align*}
      u_*(\varphi_*(x_1,\dots,x_n))\
      &= \   (u\varphi)_*(x_1,\dots,x_n)\ = \  (\varphi(f_1+\dots+f_n))_*(x_1,\dots,x_n)\\
      &= \  \varphi_*( (f_1)_*(x_1),\dots,(f_n)_*(x_n))\ = \ 
        \varphi_*(x_1,\dots,x_n)\ .
    \end{align*}
    This shows that $\varphi_*(x_1,\dots,x_n)$ is supported on the union
    of the sets $\varphi(\{i\}\times \supp(x_i))$. Part (iii) is just another way
    to concisely summarize part (ii).
  \end{proof}

\begin{con}[Box product of tame $\cM$-sets]\label{con:box product}
  We let $X$ and $Y$ be tame $\cM$-sets.
  The box product $X\boxtimes Y$ of the underlying $M$-sets was introduced
  in Definition \ref{def:box-product}.
  We claim that $X\boxtimes Y$
  has a preferred $\cM$-action which, moreover,
  makes it a coproduct in the category of tame $\cM$-sets.
  Indeed, a general fact about algebras over an operad is that
  the product $X\times Y$ has a coordinatewise $\cM$-action that makes
  it a product of $X$ and $Y$ in the category of $\cM$-sets.
  Proposition \ref{prop:support of varphi(a,b)} (ii) shows that the subset
  $X\boxtimes Y$ of $X\times Y$ is invariant under the action
  of the full injection operad; hence the product $\cM$-action on $X\times Y$ restricts
  to an $\cM$-action on $X\boxtimes Y$.

  Given two morphisms between tame $\cM$-sets
  $f\colon X\to X'$ and $g\colon Y\to Y'$, the map
  \[ f\boxtimes g\colon X\boxtimes Y\ \to \ X'\boxtimes Y' \]
  is a morphism of $\cM$-sets (and not just a morphism of $M$-sets).
  A formal consequence of the identification of tame $\cM$-sets
  with commutative monoids under the box product in Theorem \ref{thm:Iset and com}
  below is that the box product is actually a coproduct
  in the category of tame $\cM$-sets.
\end{con}

\begin{con}[Sum operation]\label{con:sum operation}
  We introduce a partially defined `sum' operation 
  on a tame $\cM$-set $X$;
  the operation is defined on pairs of elements with disjoint support,
  i.e., it is a map
  \[ + \colon X\boxtimes X \ \to \ X \ .\]
  Given two disjointly supported objects $(x,y)$ of the tame $\cM$-set
  $X$, we choose an injection $\varphi\colon \mathbf 2\times\omega\to\omega$
  such that $\varphi(1,j)=j$ for all $j\in \supp(x)$ and
  $\varphi(2,j)=j$ for all $j\in \supp(y)$.  Then we define the {\em
    sum} of $x$ and $y$ as
  \begin{equation}\label{eq:define_sum_elements}
    x+y \ = \ \varphi_*(x,y) \ .  
  \end{equation}
  Proposition \ref{prop:support of varphi(a,b)} implies that
  this is independent of the choice of $\varphi$, 
  and that the support of $x+y$ satisfies
  \begin{equation}\label{eq:supp_of_sum}
    \supp(x+y)\ \subseteq \ 
    \varphi(\{1\}\times \supp(x))\ \cup\ \varphi(\{2\}\times \supp(y)) \ = \ 
    \supp(x)\cup \supp(y)\ . \end{equation}
\end{con}

\begin{prop}\label{prop:sum commutative}
  Let $X$ be a tame $\cM$-set.
  \begin{enumerate}[\em (i)]
  \item For every pair $(x,y)$ of disjointly supported elements of $X$, the relation
    \[ (x,y) \ = \ (x,0) + (0,y) \]
    holds in the $\cM$-set $X\boxtimes X$. 
  \item The element $0$ satisfies $x+0=x=0+x$ for all $x\in X$.
  \item The sum map $+\colon X\boxtimes X\to X$ is a morphism of $\cM$-sets.
  \item The relation $x+y=y+x$ holds for all disjointly supported
    elements $x,y$ of $X$.
  \item The sum map is associative in the following sense: the relation
    \[ (x+y)+z \ = \ x+ (y+z) \]
    holds for every triple $(x,y,z)$ of elements in $X$
    whose supports are pairwise disjoint.
  \end{enumerate}
\end{prop}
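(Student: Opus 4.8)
The plan is to reduce each identity to the defining formula $x+y=\varphi_*(x,y)$ of Construction~\ref{con:sum operation} and then to play off two facts against each other: the associativity of the $\cM$-action, which rewrites an iterated operation $\varphi_*((\psi_1)_*(\dots),\dots,(\psi_k)_*(\dots))$ as a single operation $(\varphi\circ(\psi_1+\dots+\psi_k))_*(\dots)$, and Proposition~\ref{prop:support of varphi(a,b)}~(i), which says that $\vartheta_*(x_1,\dots,x_n)$ only depends on the restrictions of $\vartheta$ to the sets $\{i\}\times\supp(x_i)$. The distinguished element will always be eliminated by writing $0=o_*(\,)$ for the unique operation $o\in M(0)$: for $\varphi\in M(2)$ the composite $\varphi\circ(\id+o)$ lies in $M(1)=M$ and is the injection $j\mapsto\varphi(1,j)$, which by Proposition~\ref{prop:finite support}~(i) acts as the identity on $x$ as soon as $\varphi(1,j)=j$ for all $j\in\supp(x)$; symmetrically $\varphi\circ(o+\id)\in M$ is $j\mapsto\varphi(2,j)$. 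Thus $\varphi_*(x,0)=x$ and $\varphi_*(0,y)=y$ whenever $\varphi$ restricts to the inclusion on $\{1\}\times\supp(x)$, respectively $\{2\}\times\supp(y)$.

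With this in hand, (ii) is the observation that $x+0=\varphi_*(x,0)=x$ for any admissible $\varphi$, and $0+x=x$ symmetrically. Part~(i) follows at once: since $\supp_{X\boxtimes X}(x,0)=\supp(x)$ and $\supp_{X\boxtimes X}(0,y)=\supp(y)$ are disjoint by hypothesis, an injection $\varphi\in M(2)$ restricting to the inclusion on $\{1\}\times\supp(x)$ and on $\{2\}\times\supp(y)$ computes the sum, and because the $\cM$-action on $X\boxtimes X$ is the coordinatewise one inherited from $X\times X$ (Construction~\ref{con:box product}) we get $(x,0)+(0,y)=(\varphi_*(x,0),\varphi_*(0,y))=(x,y)$. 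For commutativity~(iv), let $\psi\in M(2)$ witness $y+x=\psi_*(y,x)$; the $\Sigma_2$-equivariance of the $\cM$-action rewrites this as $(\psi\tau)_*(x,y)$ for the transposition $\tau$, and $(\psi\tau)(1,j)=\psi(2,j)$, $(\psi\tau)(2,j)=\psi(1,j)$ show that $\psi\tau$ restricts to the inclusion on $\{1\}\times\supp(x)$ and on $\{2\}\times\supp(y)$; hence Proposition~\ref{prop:support of varphi(a,b)}~(i) identifies $(\psi\tau)_*(x,y)$ with $x+y$.

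For associativity~(v), write $x+y=\chi_*(x,y)$ and then $(x+y)+z=\varphi_*(x+y,z)$, choosing $\varphi$ to restrict to the inclusion on $\{1\}\times(\supp(x)\cup\supp(y))$ — this is admissible since $\supp(x+y)\subseteq\supp(x)\cup\supp(y)$ by~\eqref{eq:supp_of_sum}, and this set is disjoint from $\supp(z)$ — and on $\{2\}\times\supp(z)$. Operad associativity turns $(x+y)+z$ into $(\varphi\circ(\chi+\id))_*(x,y,z)$, and a direct evaluation of the composition formula shows that $\varphi\circ(\chi+\id)\in M(3)$ restricts to the inclusion on $\{1\}\times\supp(x)$, $\{2\}\times\supp(y)$ and $\{3\}\times\supp(z)$. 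Running the symmetric computation for $x+(y+z)$ — where the arguments again appear in the order $(x,y,z)$, so that no permutation intervenes — produces an element of $M(3)$ with exactly the same restrictions, and Proposition~\ref{prop:support of varphi(a,b)}~(i) gives $(x+y)+z=x+(y+z)$.

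Part~(iii) is the one I expect to demand the most care. Given $\psi\in M(n)$ and disjointly supported pairs $(x_i,y_i)$, injectivity of $\psi$ together with the disjointness of $\supp(x_i)$ from $\supp(y_i)$ shows that $\psi_*(x_1,\dots,x_n)$ and $\psi_*(y_1,\dots,y_n)$ have disjoint supports, so the left-hand side $+\big(\psi_*((x_1,y_1),\dots,(x_n,y_n))\big)=\varphi_*(\psi_*(\vec x),\psi_*(\vec y))$ makes sense; operad associativity collapses it to an operation in $M(2n)$ evaluated on $(x_1,\dots,x_n,y_1,\dots,y_n)$, whereas the right-hand side $\psi_*(x_1+y_1,\dots,x_n+y_n)$ collapses to an operation in $M(2n)$ evaluated on the interleaved tuple $(x_1,y_1,\dots,x_n,y_n)$. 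Using $\Sigma_{2n}$-equivariance to bring both onto the interleaved tuple, and choosing the auxiliary injections $\varphi$ and $\chi^i$ with restrictions strong enough to control $\supp(\psi_*(\vec x))$ and $\supp(\psi_*(\vec y))$ (again legitimate thanks to pairwise disjointness), one checks from the composition formulas that the two resulting $M(2n)$-operations restrict to the same map — the inclusion — on each $\{2i-1\}\times\supp(x_i)$ and $\{2i\}\times\supp(y_i)$; Proposition~\ref{prop:support of varphi(a,b)}~(i) then yields the equality, so $+$ commutes with all operad operations. The only genuine obstacle is the index bookkeeping in this last step: tracking the block permutation relating the two orderings and matching it against the operad composition formula. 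No ingredient beyond operad associativity and Proposition~\ref{prop:support of varphi(a,b)} is required.
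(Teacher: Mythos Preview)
Your proof is correct, and parts (i), (ii), and (iii) match the paper's argument essentially line for line. The genuine difference is in how you handle commutativity (iv) and associativity (v). You prove these directly and independently: for (iv) you use the $\Sigma_2$-equivariance of the operad action to rewrite $\psi_*(y,x)=(\psi\tau)_*(x,y)$ and then observe that $\psi\tau$ is itself an admissible choice for computing $x+y$; for (v) you collapse both bracketings to ternary operations in $M(3)$ and compare via Proposition~\ref{prop:support of varphi(a,b)}~(i). The paper instead first establishes (iii) and then derives (iv) and (v) from the resulting Eckmann--Hilton interchange law $(x+y)+(y'+z)=(x+y')+(y+z)$ by specializing $x=z=0$ and $y'=0$ respectively. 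Your route is more elementary in that (iv) and (v) do not depend on (iii); the paper's route is slicker once (iii) is in hand, since commutativity and associativity then fall out with no further choices of injections. Both are perfectly valid, and your direct arguments for (iv) and (v) are in fact the cleaner way to see why those identities hold.
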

\begin{proof}
  (i) We choose $\varphi\in M(2)$ as in the definition of $x+y$, i.e.,
  $\varphi^1 = \varphi(1,-)$ is the identity on $\supp(x)$ and
  $\varphi^2 = \varphi(2,-)$ is the identity on $\supp(y)$.  Since the
  distinguished object~0 has empty support, we have
  $\supp(x,0)=\supp(x)$ and $\supp(0,y)=\supp(y)$.  So $\varphi$ can
  also be used to define $(x,0)+(0,y)$. Hence
  \[ (x,0)+(0,y) =  \varphi_*((x,0),(0,y)) = 
    (\varphi_*(x,0),\varphi_*(0,y)) = 
    (\varphi^1_*(x),\varphi^2_*(y)) =  (x,y)\ .  \]
  
  (ii)
  The distinguished 0 has empty support.
  We choose an injection $\varphi\in M(2)$
  as in the definition of $x+0$, i.e., such that $\varphi^1$ is the identity on
  the support of $x$.
  Then
  \[ x+0 \ = \ \varphi_*(x,0)\ = \ \varphi^1_*(x)\ = \ x\ . \]
  The relation $0+x=x$ is proved in much the same way.

  (iii)
  We must show that the sum map commutes with the action of $n$-ary operations in $M(n)$,
  for every $n\geq 0$.
  The case $n=0$ is the relation $0+0=0$, which holds by (ii).
  For $n\geq 1$ we consider any $\lambda\in M(n)$, as well as
  elements $(x_j,y_j)$ in $X\boxtimes X$ for $j=1,\dots, n$.
  We choose $\varphi_j\in M(2)$ as in the definition of $x_j+y_j$, i.e.,
  $\varphi_j^1$ is the identity on $\supp(x_j)$
  and $\varphi_j^2$ is the identity on $\supp(y_j)$.
  We choose $\kappa\in M(2)$ as in the definition of $\lambda_*(x_1,\dots,x_n)+\lambda_*(y_1,\dots,y_n)$,
  i.e., $\kappa^1$ is the identity on 
  \[ \supp(\lambda_*(x_1,\dots,x_n))\ \subseteq \ \lambda\left(\bigcup_{j=1}^n \{j\}\times\supp(x_j) \right)\ ,  \]
  and $\kappa^2$ is the identity on $\supp(\lambda_*(y_1,\dots,y_n))$.
  We let $\chi\colon \mathbf{2 n}\to\mathbf{n 2}$ be the shuffle permutation defined by $\chi(1,\dots, 2n) = (1,n+1, 2, n+2, \dots, 2n)$. 
  Then the injections 
  \[ \kappa(\lambda+\lambda) \text{\quad and\quad}
    \lambda(\varphi_1+\dots+\varphi_n)(\chi\times \id_\omega)
  \]
  in $M(n+n)$ agree on the sets $\{j\}\times\supp(x_j)$ and 
  $\{n+j\}\times\supp(y_j)$ for all $j=1,\dots, n$.
  So these two injections act in the same way on the $(n+n)$-tuple $(x_1,\dots,x_n,y_1,\dots,y_n)$,
  and we deduce the relations 
  \begin{align*}
    \lambda_*(x_1,\dots,&x_n) + \lambda_*(y_1,\dots,y_n)\\
    =&\  \kappa_*(\lambda_*(x_1,\dots,x_n),\lambda_*(y_1,\dots,y_n)) \\
    =&\  (\kappa(\lambda+\lambda))_*(x_1,\dots,x_n,y_1,\dots,y_n) \\
    =&\  (\lambda(\varphi_1+\dots+\varphi_n)(\chi\times \id_\omega))_*(x_1,\dots,x_n,y_1,\dots,y_n)  \\
    =&\  (\lambda(\varphi_1+\dots+\varphi_n))_*(x_1,y_1,\dots,x_n,y_n)  \\
    =&\  \lambda_*((\varphi_1)_*(x_1,y_1),\dots,(\varphi_n)_*(x_n,y_n)) \\
    =& \ \lambda_*(x_1+y_1,\dots,x_n+y_n)\ .
  \end{align*}
  (iv)
  We showed in part (iii) that the sum map $+\colon X\boxtimes X\to X$ is a morphism of $\cM$-sets.
  Every morphism of $\cM$-sets commutes with the sum operation,
  so the sum map $+\colon X\boxtimes X\to X$ takes sums in $X\boxtimes X$ to sums in $X$.
  In other words, the interchange relation
  \begin{equation}\label{eq:interchange}
    (x+y) +(y'+z)\ = \ (x+y')+(y+z)    
  \end{equation}
  holds for all $((x,y),(y',z))$ in $(X\boxtimes X)\boxtimes (X\boxtimes X)$.
  We specialize to the case where $x=z=0$ are the distinguished elements.
  Since the distinguished element is a neutral element for the sum operation,
  we obtain the commutativity relation $y+y'=y'+y$.

  (v)
  In the special case where $y'=0$ is the neutral element,
  the interchange relation \eqref{eq:interchange}
  becomes the associativity relation $(x+y)+z =x+(y+z)$.
\end{proof}

Altogether this shows that for every tame $\cM$-set $X$,
the sum map $+\colon X\boxtimes X\to X$ is a unital, commutative and associative
morphism of $\cM$-sets. In particular, the sum map
makes the underlying $M$-set of $X$ into a commutative monoid
with respect to the symmetric monoidal structure given by the box product.
The next theorem shows that this data determines the $\cM$-action completely,
and tame $\cM$-algebras are `the same as' 
commutative monoids in the symmetric monoidal category $(\set^M_{\tm},\boxtimes)$.

\begin{theorem}\label{thm:Iset and com}
  The functor
  \[ \set^\cM_{\tm}\ \to \ \text{\em Com}(\set^M_{\tm},\boxtimes)\ , \quad X \ \longmapsto \ (X,+) \]
  is an isomorphism of categories from the category of tame $\cM$-sets
  to the category of commutative monoids
  in the symmetric monoidal category $(\set^M_{\tm},\boxtimes)$.
\end{theorem}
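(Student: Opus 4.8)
The plan is to construct an explicit inverse functor
$\text{Com}(\set^M_{\tm},\boxtimes)\to\set^\cM_{\tm}$ and to check that the
two composites are the identity; that the assignment $X\mapsto(X,+)$ is a
well-defined functor is already contained in
Proposition~\ref{prop:sum commutative} and Construction~\ref{con:sum operation}.
Given a commutative monoid $(W,\mu,e)$ in $(\set^M_{\tm},\boxtimes)$ we equip
$W$ with an $\cM$-action as follows. For $n\geq 1$ an injection
$\varphi\colon\mathbf n\times\omega\to\omega$ restricts on each slice
$\{i\}\times\omega$ to an injection $\varphi^i\in M$, and since $\varphi$ is
injective the images $\varphi^1(\omega),\dots,\varphi^n(\omega)$ are pairwise
disjoint; hence for $x_1,\dots,x_n\in W$ the elements
$\varphi^1 x_1,\dots,\varphi^n x_n$ have pairwise disjoint supports, by
Proposition~\ref{prop:finite support}(ii). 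We therefore set
\[ \varphi_*(x_1,\dots,x_n)\ = \ (\varphi^1 x_1)+\dots+(\varphi^n x_n)\ ,\]
where $+$ denotes the iterated multiplication $\mu$. This makes sense because
$\mu$ is $M$-equivariant and hence does not enlarge supports, so every partial
sum again has support disjoint from the remaining summands, and it is
independent of the order and bracketing because $\mu$ is associative and
commutative. For $n=0$ we let $M(0)$ act through the unit $e$, and for $n=1$ the
formula gives back the original $M$-action on $W$.

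First I would verify that these maps constitute an action of the injection
operad. The $\Sigma_n$-equivariance is immediate from the definition of the
$\Sigma_n$-action on $M(n)$, and the unit axiom holds because
$\varphi_*(x)=\varphi x$ for $\varphi\in M(1)=M$. The associativity axiom with
respect to the operad structure maps \eqref{eq:operad structure} is a
bookkeeping identity: unravelling both
$(\varphi\circ(\psi_1+\dots+\psi_k))_*(x_{1},\dots,x_{n_1+\dots+n_k})$ and
$\varphi_*((\psi_1)_*(x_1,\dots),\dots,(\psi_k)_*(\dots))$ and using only the
$M$-equivariance of $\mu$ together with its associativity and commutativity,
one rewrites each side as the $\mu$-sum of the elements
$\bigl((\varphi\circ(\psi_1+\dots+\psi_k))(i,-)\bigr)\cdot x_i$. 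This is the
analogue for the operad of the computation in the proof of
Proposition~\ref{prop:sum commutative}(iii), and it is the step I expect to be
the main obstacle, since one has to track carefully the reindexing built into
\eqref{eq:operad structure}. Because $W$ is a tame $M$-set, the resulting
$\cM$-set is tame, so we obtain the desired functor.

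Finally I would check that this functor is inverse to $X\mapsto(X,+)$, on
objects and on morphisms. Starting from a commutative monoid $(W,\mu,e)$, the
sum operation of the reconstructed $\cM$-set sends a disjointly supported pair
$(a,b)$ to $\psi_*(a,b)=(\psi^1 a)+(\psi^2 b)$ for a $\psi\in M(2)$ with
$\psi^1$ fixing $\supp(a)$ and $\psi^2$ fixing $\supp(b)$ elementwise; by
Proposition~\ref{prop:finite support}(i) this equals $\mu(a,b)$, and the
distinguished element is $e$, so $(W,+,e)=(W,\mu,e)$. Conversely, starting from
a tame $\cM$-set $X$, the $\cM$-action reconstructed from $(X,+)$ assigns to
$\varphi\in M(n)$ and finitely supported $x_1,\dots,x_n$ the $+$-sum of the
elements $(\varphi^i)_*(x_i)$; rewriting each application of $+$ by means of
operad associativity (the sum being a binary operad operation precomposed with
suitable identities on supports) and then applying
Proposition~\ref{prop:support of varphi(a,b)}(i), an induction on $n$ identifies
this iterated sum with $\varphi_*(x_1,\dots,x_n)$. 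On morphisms, a map
$f\colon X\to X'$ is a morphism of $\cM$-sets precisely when it is
$M$-equivariant and commutes with $\mu=+$: one direction is trivial, and the
other holds because every $\varphi_*$ is, by the displayed formula, built solely
from the $M$-action and $\mu$. Hence the functor is an isomorphism of
categories.
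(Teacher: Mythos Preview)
Your proposal is correct and follows essentially the same route as the paper: both define the inverse $\cM$-action on a commutative $\boxtimes$-monoid by the formula $\varphi_*(x_1,\dots,x_n)=\sum_j \varphi^j x_j$, verify the operad axioms from associativity, commutativity and $M$-equivariance of $\mu$, and then check that the two constructions undo each other. The only organizational difference is that the paper packages the argument as ``faithful, full, injective and surjective on objects'' rather than ``explicit inverse plus two round-trips'', and for the identity $\varphi_*(x_1,\dots,x_n)=\sum_j \varphi^j_*(x_j)$ in a given $\cM$-set it invokes the factorization $M(n)\times X^n\xrightarrow{\chi^n_X} X^{\boxtimes n}\xrightarrow{\sum} X$ together with Proposition~\ref{prop:sum commutative}(i),(iii), whereas you reach the same conclusion by unfolding the iterated sum via operad associativity and then appealing to Proposition~\ref{prop:support of varphi(a,b)}(i). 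Both arguments are equivalent in content.
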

\begin{proof}
  The functor is clearly faithful.  Now we show that the functor is
  full.  So we let $f\colon X\to Y$ be a morphism of commutative
  $\boxtimes$-monoids in tame $M$-sets; we must show that $f$ is also
  a morphism of $\cM$-sets.  As a morphism of commutative
  $\boxtimes$-monoids, $f$ in particular preserves the distinguished
  element 0 and is compatible with the action of $M=M(1)$.

  To treat the case $n\geq 2$, we consider the map 
  \[ \chi_{X}^n\colon M(n) \times X^n \to X^{\boxtimes n}, \quad
    (\lambda, x_1, \dots, x_n) \mapsto (\lambda^1_*(x_1),\dots, \lambda^n_*(x_n))\]
  where $\lambda^i = \lambda(i,-)$. Since $\chi^n_X$ is natural in $X$,
  the upper square in the following diagram commutes:
    \begin{equation}\begin{aligned}\label{eq:composite diagram}
      \xymatrix@C=15mm{
        M(n)\times X^n \ar[r]^{M(n)\times f^n}  \ar[d]_{\chi^n_X}&
        M(n)\times Y^n  \ar[d]^{\chi^n_Y} \\
        X^{\boxtimes n} \ar[r]^{f^{\boxtimes n}} \ar[d]_{\sum} &  Y^{\boxtimes n} \ar[d]^{\sum} \\
        X \ar[r]_-f &Y    }      
    \end{aligned}\end{equation}
  Since $f$ is a morphism of commutative $\boxtimes$-monoids,
  the lower square also commutes.
  We claim that the vertical composites are the operadic action maps.
  To show this we consider $(\lambda,x_1,\dots,x_n)\in M(n)\times X^n$.
  The generalization of Proposition \ref{prop:sum commutative}~(i) to several components
  shows that
  \[ (x_1,\dots,x_n)\ = \ \sum_{j=1}^n \iota_j(x_j)\ ,\]
  where $\iota_j\colon X\to X^{\boxtimes n}$ puts $x$ in the $j$-th slot and
  fills up the other coordinates with the distinguished element $0$.
  The right hand side of this equation is the sum for the $\cM$-set
  $X^{\boxtimes n}$.  We let $\varphi\in M(n)$ be an injection as in
  the definition of $\lambda^1_*(x_1) +\dots+\lambda^n_*(x_n)$, i.e.,
  such that $\varphi^j$ is the identity on $\lambda^j(\supp(x_j))$ for
  every $1\leq j\leq n$.  Then the injections
  $\varphi(\lambda^1+\dots+\lambda^n)$ and $\lambda$ agree on
  $\{j\}\times\supp(x_j)$ for all $1\leq j\leq n$ and we thus have
$\lambda_*(\iota_j(x_j)) = (\varphi(\lambda^1+\dots+\lambda^n))_*(\iota_j(x_j)) = \lambda^j_*(x_j)$.  
  As a consequence, we have
  \[ 
  \lambda_*(x_1,\dots,x_n)\ = \ \lambda_*\left(\sum_{j=1}^n
    \iota_j(x_j)\right)\ = \ \sum_{j=1}^n
  \lambda_*\left(\iota_j(x_j)\right)\ = \ \sum_{j=1}^n
  \lambda^j_*(x_j) \]
  where the second equation is the fact that the sum functor is a
  morphism of $\cM$-sets, by Proposition \ref{prop:sum commutative}
  (iii).  Since the diagram \eqref{eq:composite diagram} commutes, the
  map $f$ is compatible with the operadic action of $M(n)$ for all $n\geq 0$,
  and so the functor is full. 

  The vertical factorization of the operadic action in~\eqref{eq:composite diagram}
  shows that it is determined by the $M$-action and the
  sum. Thus the functor is injective on objects.  It remains to show
  that it is surjective on objects, i.e., that every commutative
  $\boxtimes$-monoid $(X,0,+)$ arises from an $\cM$-set through the
  sum construction.  For $n\geq 0$, we define the operadic action map
  \[  M(n)\times X^n \ \to \ X \text{\quad by\quad}
    (\lambda,x_1,\dots,x_n)\ \longmapsto \
      \lambda_*(x_1,\dots,x_n)\ = \ \sum_{j=1}^n \lambda^j_*(x_j)\ .
  \]
  This makes sense because the maps $\lambda^j$ have disjoint images
  for $j=1,\dots,n$,
  so the elements $\lambda^1_*(x_1),\dots,\lambda^n_*(x_1)$ can indeed be added.
  For $n=0$, this definition returns the distinguished element 0, and for $n=1$
  it specializes to the given $M$-action.
  The operadic symmetry condition holds because the sum operation is commutative.
  The operadic associativity condition holds because
  the sum operation is associative and commutative.
  Finally, the sum map derived from this operadic action is the sum map
  we started out with, by definition.
\end{proof}

Now we recall the {\em operadic product}, another binary pairing for $M$-sets.
We will show that the operadic product supports a natural
map to the box product and that map is an isomorphism of $M$-sets
whenever the factors are tame.

\begin{con}[Operadic product]
  As before we denote by $M(2)$ the set of binary operations in the injection operad $\cM$,
  i.e., the set of injections from $\{1,2\}\times\omega$ to $\omega$.
  As part of the operad structure of $\cM$,
  the set $M(2)$ comes with a left $M$-action and a right
  $M^2$-action given by
  \[ M\times M(2) \times M^2\ \to \ M(2)\ , \quad
  (f,\psi,(u,v)) \ \longmapsto \ f\circ\psi\circ(u+v) \ . \]
  Here $u+v\colon \{1,2\}\times\omega\to \{1,2\}\times\omega$ is defined by
  $(u+v)(1,i)=(1,u(i))$ and $(u+v)(2,i)=(2,v(i))$.  Given two
  $M$-sets $X$ and $Y$ we can coequalize the right
  $M^2$-action on $M(2)$ with the left $M^2$-action
  on the product~$X\times Y$ and form
  \[ \ M(2) \times_{M\times M} (X\times Y)\ . \]
  The left $M$-action on $M(2)$ by postcomposition descends to an
  $M$-action on this operadic product.  Some care has to be taken when
  analyzing this construction: because the monoid $M$ is not a group,
  it may be hard to figure out when two elements of
  $M(2) \times X\times Y$ become equal in the coequalizer.  Viewed as
  a binary product on $M$-sets, $(X,Y) \mapsto M(2) \times_{M\times M} (X\times Y)$ is
  coherently associative and commutative, but it does {\em not} have a
  unit object.
\end{con}

Now we let $X$ and $Y$ be tame $M$-sets. To state the next result, we write 
\[ p^1 \colon M(2) \times_{M\times M} (X\times Y) \ \to \ X\text{\quad and\quad}
  p^2\colon M(2) \times_{M\times M} (X\times Y) \ \to \ Y\]
for the morphisms of $M$-sets defined by
\[ p^1[\psi,x,y] \ = \ \psi^1 x\text{\qquad and\qquad} p^2[\psi,x,y]\ = \ \psi^2 y\ ,\]
where $\psi^1=\psi(1,-)$ and $\psi^2=\psi(2,-)$.
We observe that
\[ \supp( p^1[\psi,x,y]) \ = \ \supp( \psi^1 x) \ \subseteq \ \psi^1(\supp(x))\
  = \ \psi(\{1\}\times\supp(x)  ) \ ,\]
and similarly $\supp( p^2[\psi,x,y])\subseteq \psi(\{2\}\times\supp(y))$.
Because $\psi$ is injective, these two sets are disjoint.
So all elements in the image of $(p^1,p^2)$ lie in $X\boxtimes  Y$.
We write
\begin{equation}\label{eq:chi}
  \chi_{X,Y}\colon M(2) \times_{M\times M} (X\times Y)\ \to\  X\boxtimes Y
\end{equation}
for the map $(p^1,p^2)$ when we restrict the codomain to $X\boxtimes Y$.

\begin{prop}\label{prop:convolution versus operadic}
  For all tame $M$-sets $X$ and $Y$, the map~\eqref{eq:chi}
 is an isomorphism of $M$-sets.
\end{prop}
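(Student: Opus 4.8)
The plan is to prove that $\chi_{X,Y}$ is bijective; since $\chi_{X,Y}$ is already a morphism of $M$-sets (it is the restriction of $(p^1,p^2)$ to the $M$-subset $X\boxtimes Y$ of $X\times Y$), this is all that is needed. Both directions use the tameness of $X$ and $Y$ through Propositions~\ref{prop:finite support} and~\ref{prop:algebraic consequences}. For \emph{surjectivity}, I would start from a pair $(a,b)\in X\boxtimes Y$, so that $\supp(a)$ and $\supp(b)$ are disjoint finite subsets of $\omega$. Since $\omega\setminus(\supp(a)\cup\supp(b))$ is countably infinite, I can write $\omega=\Omega_1\sqcup\Omega_2$ with $\Omega_1,\Omega_2$ infinite, $\supp(a)\subseteq\Omega_1$ and $\supp(b)\subseteq\Omega_2$, and pick bijections $\psi^1\colon\omega\to\Omega_1$ and $\psi^2\colon\omega\to\Omega_2$. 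These have disjoint images, hence assemble to an injection $\psi\in M(2)$. As $a$ is supported on $\psi^1(\omega)=\Omega_1$, Proposition~\ref{prop:algebraic consequences} provides $x\in X$ with $\psi^1 x=a$, and likewise $y\in Y$ with $\psi^2 y=b$; then $\chi_{X,Y}[\psi,x,y]=(\psi^1 x,\psi^2 y)=(a,b)$.

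For \emph{injectivity}, suppose $\chi_{X,Y}[\psi,x,y]=\chi_{X,Y}[\varphi,x',y']$, and write $a=\psi^1 x=\varphi^1 x'$ and $b=\psi^2 y=\varphi^2 y'$. By Proposition~\ref{prop:finite support}(ii), $\psi^1$ restricts to a bijection $\supp(x)\to\supp(a)$ and $\varphi^1$ restricts to a bijection $\supp(x')\to\supp(a)$; composing them gives a bijection $\sigma\colon\supp(x)\to\supp(x')$ with $\varphi^1(\sigma(i))=\psi^1(i)$ for all $i\in\supp(x)$, which I extend to a bijection $\hat\sigma\in M$. Then $\varphi^1\circ\hat\sigma$ agrees with $\psi^1$ on $\supp(x)$, so Proposition~\ref{prop:finite support}(i) gives $\varphi^1(\hat\sigma x)=\psi^1 x=a=\varphi^1 x'$, and the injectivity of the action of $\varphi^1$ (Proposition~\ref{prop:algebraic consequences}) forces $\hat\sigma x=x'$. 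Building $\hat\tau\in M$ in the same way from $\supp(y)$, $\supp(y')$ and $\supp(b)$ gives $\hat\tau y=y'$. The defining relation of the coequalizer $M(2)\times_{M\times M}(X\times Y)$ now yields
\[ [\varphi,x',y']\ =\ [\varphi,\hat\sigma x,\hat\tau y]\ =\ [\varphi\circ(\hat\sigma+\hat\tau),\,x,\,y]\ . \]
Moreover $\varphi\circ(\hat\sigma+\hat\tau)$ agrees with $\psi$ on $\{1\}\times\supp(x)$ and on $\{2\}\times\supp(y)$, so Lemma~\ref{lemma:monoid action} (applied with $A_1=\supp(x)$ and $A_2=\supp(y)$) provides a finite zig-zag of elements of $M(2)$ joining $\varphi\circ(\hat\sigma+\hat\tau)$ to $\psi$ in which each step is precomposition by some $f_1+f_2$ with $f_i$ the identity on $A_i$. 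For such $f_i$ one has $f_1 x=x$ and $f_2 y=y$, so every step leaves the class $[-,x,y]$ unchanged; hence $[\varphi\circ(\hat\sigma+\hat\tau),x,y]=[\psi,x,y]$ and therefore $[\psi,x,y]=[\varphi,x',y']$.

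The step that calls for the most care is the appeal to Lemma~\ref{lemma:monoid action}. Because $M$ is a monoid and not a group, it is a priori unclear when two elements of $M(2)$ become identified in the coequalizer, and this lemma is exactly the combinatorial input that settles it; it plays here the role that the analogous statement for the linear isometries operad plays for $\mathcal L$-spaces. The remaining manipulations are routine bookkeeping, but they rely at several points on ``the action of an injection is injective'' and ``supports transform covariantly with equality in the tame case'', both of which need the tameness hypothesis and are supplied by Propositions~\ref{prop:finite support} and~\ref{prop:algebraic consequences}.
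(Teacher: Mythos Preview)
Your argument is correct, and it follows a genuinely different route from the paper's proof. The paper does not argue by direct surjectivity and injectivity; instead it invokes the structure theorem for tame $M$-sets (Theorem~\ref{thm:complement}) to decompose $X$ and $Y$ into pieces of the form $\cI_m\times_{\Sigma_m}A$, observes that both source and target of $\chi_{X,Y}$ commute with disjoint unions and with passage to orbits by a group action in each variable, and thereby reduces to the special case $X=\cI_m$, $Y=\cI_n$. That special case is then handled by factoring $\chi_{\cI_m,\cI_n}$ through $M(2)/(M^{(m)}\times M^{(n)})$ and $\cI_{m+n}$, with Lemma~\ref{lemma:monoid action} supplying the identification $M(2)/(M^{(m)}\times M^{(n)})\cong\cI_{m+n}$.

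Both arguments ultimately rest on Lemma~\ref{lemma:monoid action}, which is the essential combinatorial input controlling the coequalizer. Your approach has the advantage of being self-contained: it does not need the structure theorem, only the basic support calculus (Propositions~\ref{prop:finite support} and~\ref{prop:algebraic consequences}) and Lemma~\ref{lemma:monoid action}. The paper's approach, on the other hand, is more structural and fits a recurring pattern in the paper of reducing statements about general tame $M$-sets to the representables $\cI_m$; it also makes the connection to Example~\ref{eg:box of semifree} explicit. Your element-wise argument is arguably more transparent for this particular statement, while the paper's reduction illustrates how Theorem~\ref{thm:complement} serves as a general-purpose tool.
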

\begin{proof}
  Source and target of the morphism $\chi_{X,Y}$ commute with disjoint unions
  and orbits by group actions in each of the variables.
  So Theorem \ref{thm:complement} reduces the claim to
  the special case $X=\cI_m$ and $Y=\cI_n$ for some $m,n\geq 0$.
  In this special case, the morphism $\chi_{\cI_m,\cI_n}$ factors as the following composite:
  \[ M(2) \times_{M\times M} (\cI_m\times \cI_n)\ \xra{\ q\ }\
    M(2) / (M^{(m)}\times M^{(n)})\ \xra{\ \epsilon\ }\
    \cI_{m+n}\ \xra[\iso]{\ \rho\ } \ \cI_m\boxtimes \cI_m \ .\]
  Here $M^{(m)}$ is the submonoid of $M$ consisting of those injections that
  are the identity on $\{1,\dots,m\}$.
  The first map $q$ sends the class $[\psi,f,g]$ to the class of $\psi(f+g)$;
  it is an isomorphism because the map
  \[ M/M^{(m)} \ \to \ \cI_m \ , \quad [f]\ \longmapsto \ f|_{\{1,\dots,m\}} \]
  is an isomorphism of $M$-sets.
  The second map $\epsilon$ is defined by
  \begin{equation}
    \epsilon[\psi](i)\ = \
    \begin{cases}
      \  \psi(1,i)   & \text{ for $1\leq i\leq m$, and}\\
      \psi(2,i-m) & \text{ for $m+1\leq i\leq m+n$.}
    \end{cases}
  \end{equation}
  The second map $\epsilon$ is an isomorphism of $M$-sets by
  Lemma \ref{lemma:monoid action}, applied to $n=2$, $A_1=\{1,\dots,m\}$
  and $A_2=\{1,\dots,n\}$.
  The third map $\rho$ is the isomorphism discussed in Example \ref{eg:box of semifree}.
  So the map $\chi_{\cI_m,\cI_n}$ is an isomorphism.
\end{proof}

\end{appendix}

\begin{bibdiv}
\begin{biblist}

\bib{BK}{book}{
      author={Bousfield, A.~K.},
      author={Kan, D.~M.},
       title={Homotopy limits, completions and localizations},
      series={Lecture Notes in Mathematics, Vol. 304},
   publisher={Springer-Verlag, Berlin-New York},
        date={1972},
}

\bib{day:closed}{incollection}{
      author={Day, Brian},
       title={On closed categories of functors},
        date={1970},
   booktitle={Reports of the {M}idwest {C}ategory {S}eminar, {IV}, {L}ecture
  {N}otes in {M}athematics, {V}ol. 137, {S}pringer, {B}erlin},
       pages={1\ndash 38},
}

\bib{dundas-rondigs-ostvaer:enriched}{article}{
      author={Dundas, Bj\o rn~Ian},
      author={R\"{o}ndigs, Oliver},
      author={\O{}stv\ae{}r, Paul~Arne},
       title={Enriched functors and stable homotopy theory},
        date={2003},
        ISSN={1431-0635},
     journal={Doc. Math.},
      volume={8},
       pages={409\ndash 488},
}

\bib{dundas-rondigs-ostvaer:motivic}{article}{
      author={Dundas, Bj\o rn~Ian},
      author={R\"{o}ndigs, Oliver},
      author={\O{}stv\ae{}r, Paul~Arne},
       title={Motivic functors},
        date={2003},
        ISSN={1431-0635},
     journal={Doc. Math.},
      volume={8},
       pages={489\ndash 525},
}

\bib{Dugger_replacing}{article}{
      author={Dugger, Daniel},
       title={Replacing model categories with simplicial ones},
        date={2001},
        ISSN={0002-9947},
     journal={Trans. Amer. Math. Soc.},
      volume={353},
      number={12},
       pages={5003\ndash 5027 (electronic)},
         url={http://dx.doi.org/10.1090/S0002-9947-01-02661-7},
}

\bib{ekmm}{book}{
      author={Elmendorf, A.~D.},
      author={Kriz, I.},
      author={Mandell, M.~A.},
      author={May, J.~P.},
       title={Rings, modules, and algebras in stable homotopy theory},
      series={Mathematical Surveys and Monographs},
   publisher={American Mathematical Society, Providence, RI},
        date={1997},
      volume={47},
        ISBN={0-8218-0638-6},
        note={With an appendix by M. Cole},
}

\bib{glasman}{article}{
      author={Glasman, Saul},
       title={Day convolution for {$\infty$}-categories},
        date={2016},
        ISSN={1073-2780},
     journal={Math. Res. Lett.},
      volume={23},
      number={5},
       pages={1369\ndash 1385},
         url={https://doi.org/10.4310/MRL.2016.v23.n5.a6},
}

\bib{HSS}{article}{
      author={Hovey, Mark},
      author={Shipley, Brooke},
      author={Smith, Jeff},
       title={Symmetric spectra},
        date={2000},
        ISSN={0894-0347},
     journal={J. Amer. Math. Soc.},
      volume={13},
      number={1},
       pages={149\ndash 208},
         url={https://doi.org/10.1090/S0894-0347-99-00320-3},
}

\bib{jardine:motivic}{article}{
      author={Jardine, J.~F.},
       title={Motivic symmetric spectra},
        date={2000},
        ISSN={1431-0635},
     journal={Doc. Math.},
      volume={5},
       pages={445\ndash 552},
}

\bib{Kodjabachev-S_commutative}{article}{
      author={Kodjabachev, Dimitar},
      author={Sagave, Steffen},
       title={Strictly commutative models for {$E_\infty$} quasi-categories},
        date={2015},
        ISSN={1532-0073},
     journal={Homology Homotopy Appl.},
      volume={17},
      number={1},
       pages={121\ndash 128},
         url={https://doi.org/10.4310/HHA.2015.v17.n1.a5},
}

\bib{lind:diagram}{article}{
      author={Lind, John~A.},
       title={Diagram spaces, diagram spectra and spectra of units},
        date={2013},
        ISSN={1472-2747},
     journal={Algebr. Geom. Topol.},
      volume={13},
      number={4},
       pages={1857\ndash 1935},
         url={https://doi.org/10.2140/agt.2013.13.1857},
}

\bib{Lurie_HTT}{book}{
      author={Lurie, Jacob},
       title={Higher topos theory},
      series={Annals of Mathematics Studies},
   publisher={Princeton University Press, Princeton, NJ},
        date={2009},
      volume={170},
        ISBN={978-0-691-14049-0; 0-691-14049-9},
}

\bib{Lurie_HA}{misc}{
      author={Lurie, Jacob},
       title={Higher algebra},
        date={2016},
        note={Preprint, available at
  \url{http://www.math.harvard.edu/~lurie/}},
}

\bib{lydakis:simplicial}{misc}{
      author={Lydakis, Manos},
       title={Simplicial functors and stable homotopy theory},
        date={1998},
        note={Preprint, available
  at~\url{http://hopf.math.purdue.edu//Lydakis/s_functors.pdf}},
}

\bib{lydakis:Gamma}{article}{
      author={Lydakis, Manos},
       title={Smash products and {$\Gamma$}-spaces},
        date={1999},
        ISSN={0305-0041},
     journal={Math. Proc. Cambridge Philos. Soc.},
      volume={126},
      number={2},
       pages={311\ndash 328},
         url={https://doi.org/10.1017/S0305004198003260},
}

\bib{mandell-may}{article}{
      author={Mandell, M.~A.},
      author={May, J.~P.},
       title={Equivariant orthogonal spectra and {$S$}-modules},
        date={2002},
        ISSN={0065-9266},
     journal={Mem. Amer. Math. Soc.},
      volume={159},
      number={755},
       pages={x+108},
         url={https://doi.org/10.1090/memo/0755},
}

\bib{mmss}{article}{
      author={Mandell, M.~A.},
      author={May, J.~P.},
      author={Schwede, S.},
      author={Shipley, B.},
       title={Model categories of diagram spectra},
        date={2001},
        ISSN={0024-6115},
     journal={Proc. London Math. Soc. (3)},
      volume={82},
      number={2},
       pages={441\ndash 512},
         url={https://doi.org/10.1112/S0024611501012692},
}

\bib{Nikolaus-S_presentably}{article}{
      author={Nikolaus, Thomas},
      author={Sagave, Steffen},
       title={Presentably symmetric monoidal {$\infty$}-categories are
  represented by symmetric monoidal model categories},
        date={2017},
        ISSN={1472-2747},
     journal={Algebr. Geom. Topol.},
      volume={17},
      number={5},
       pages={3189\ndash 3212},
         url={https://doi.org/10.2140/agt.2017.17.3189},
}

\bib{Rezk_operads}{thesis}{
      author={Rezk, Charles~W.},
       title={Spaces of algebra structures and cohomology of operads},
        type={Ph.D. Thesis},
        date={1996},
}

\bib{riehl_context}{book}{
      author={{Riehl}, Emily},
       title={{Category theory in context}},
   publisher={Dover Publications, Mineola, NY},
        date={2016},
        ISBN={978-0-486-80903-8},
}

\bib{schlichtkrull:infinite_symmetric}{article}{
      author={Schlichtkrull, Christian},
       title={The homotopy infinite symmetric product represents stable
  homotopy},
        date={2007},
        ISSN={1472-2747},
     journal={Algebr. Geom. Topol.},
      volume={7},
       pages={1963\ndash 1977},
         url={https://doi.org/10.2140/agt.2007.7.1963},
}

\bib{schwede:homotopy}{article}{
      author={Schwede, Stefan},
       title={On the homotopy groups of symmetric spectra},
        date={2008},
        ISSN={1465-3060},
     journal={Geom. Topol.},
      volume={12},
      number={3},
       pages={1313\ndash 1344},
         url={https://doi.org/10.2140/gt.2008.12.1313},
}

\bib{schwede:global}{book}{
      author={Schwede, Stefan},
       title={Global homotopy theory},
      series={New Mathematical Monographs},
   publisher={Cambridge University Press, Cambridge},
        date={2018},
      volume={34},
        ISBN={978-1-108-42581-0},
         url={https://doi.org/10.1017/9781108349161},
}

\bib{schwede:orbispaces}{article}{
      author={Schwede, Stefan},
       title={Orbispaces, orthogonal spaces, and the universal compact {L}ie
  group},
        date={2019},
     journal={Math. Z.},
        note={\doilink{10.1007/s00209-019-02265-1}},
}

\bib{shipley-THH}{article}{
      author={Shipley, Brooke},
       title={Symmetric spectra and topological {H}ochschild homology},
        date={2000},
        ISSN={0920-3036},
     journal={$K$-Theory},
      volume={19},
      number={2},
       pages={155\ndash 183},
         url={https://doi.org/10.1023/A:1007892801533},
}

\bib{Sagave-S_diagram}{article}{
      author={Sagave, Steffen},
      author={Schlichtkrull, Christian},
       title={Diagram spaces and symmetric spectra},
        date={2012},
        ISSN={0001-8708},
     journal={Adv. Math.},
      volume={231},
      number={3-4},
       pages={2116\ndash 2193},
         url={http://dx.doi.org/10.1016/j.aim.2012.07.013},
}

\bib{Sagave-S_group-compl}{article}{
      author={Sagave, Steffen},
      author={Schlichtkrull, Christian},
       title={Group completion and units in {$\mathcal I$}-spaces},
        date={2013},
        ISSN={1472-2747},
     journal={Algebr. Geom. Topol.},
      volume={13},
      number={2},
       pages={625\ndash 686},
         url={http://dx.doi.org/10.2140/agt.2013.13.625},
}

\end{biblist}
\end{bibdiv}

\end{document}